\newtheorem{theorem}{Theorem}[section]
\theoremstyle{plain}
\newtheorem{lemma}[theorem]{Lemma}
\newtheorem{proposition}[theorem]{Proposition}
\newtheorem{definition}[theorem]{Definition}
\newtheorem{conjecture}[theorem]{Conjecture}
\theoremstyle{remark}
\newtheorem{remark}[theorem]{Remark}
\newtheorem{example}[theorem]{Example}
\newcommand{\dbar}{\overline{\partial}}
\newcommand{\T}{{\mathbf{t}}}
\newcommand{\R}{{\mathbb R}}
\newcommand{\pr}{{\prime}}
\numberwithin{equation}{section}
\begin{document}
\title[The Novikov-Veselov Equation]{The Novikov-Veselov Equation: \\Theory and Computation}
\author{R. Croke}
\address{Department of Applied Mathematics, University of Colorado, Boulder, Colorado, 80309-0526}
\email{ryan.croke@colorado.edu}
\author{J. L. Mueller}
\address{Department of Mathematics, Colorado State University, Fort Collins, Colorado,
80523-1874, U. S. A.}
\email{mueller@math.colostate.edu}
\author{M. Music}
\address{Department of Mathematics, University of Kentucky, Lexington, Kentucky
40506-0027, U. S. A.}
\email{Michael.Music@uky.edu}
\author{P. Perry}
\address{Department of Mathematics, University of Kentucky, Lexington, Kentucky
40506-0027, U. S. A.}
\email{Peter.Perry@uky.edu}
\author{S. Siltanen}
\address{Department of Mathematics and Statistics, P.O. Box 68, FI-00014 University of Helsinki}
\email{samuli.siltanen@helsinki.fi}
\author{A. Stahel}
\address{Bern University of Applied Sciences, Engineering and Information Technology,
Mathematics, BFH-TI Biel, Postfach CH-2501 Biel, Switzerland}
\email{Andreas.Stahel@bfh.ch}
\thanks{Michael Music supported in part by NSF\ Grant DMS-1208778.}
\thanks{Peter Perry supported in part by NSF\ Grant DMS-1208778.}
\thanks{Samuli Siltanen supported in part by the Finnish Centre of Excellence in Inverse Problems Research 2012-2017 (Academy of Finland CoE-project 250215).}
\date{December 17,  2013}

\begin{abstract}
We review recent progress in theory and computation for the Novikov-Veselov (NV)
equation with potentials decaying at infinity, focusing mainly on the zero-energy case.  The inverse scattering method for the zero-energy NV equation is presented in the context of Manakov triples, treating initial data of conductivity type rigorously.  Special closed-form solutions are presented, including multisolitons, ring solitons, and breathers.  The computational inverse scattering method is used to study zero-energy exceptional points and the relationship between supercritical, critical, and subcritical potentials.

\end{abstract}
\maketitle
\vspace{-0.5in}
\tableofcontents

\section{Introduction}

The Novikov-Veselov (NV) equation is the completely integrable, nonlinear
dispersive equation%
\begin{align}
q_{t}  &  =4\operatorname{Re}\left(  4\partial^{3}q+\partial\left(  qw\right)
-E\partial w\right) \label{eq:NV}\\
\overline{\partial}w  &  =-3\partial q\nonumber
\end{align}
Here $E$ is a real parameter, the unknown function $q$ is a real-valued
function of two space variables and time, and the operators $\partial$ and
$\overline{\partial}$ are given by%
\begin{align*}
\partial &  =\frac{1}{2}\left(  \partial_{x_{1}}-i\partial_{x_{2}}\right) \\
\overline{\partial}  &  =\frac{1}{2}\left(  \partial_{x_{1}}+i\partial_{x_{2}%
}\right)  .
\end{align*}
At zero energy ($E=0$) it can also be written (after trivial rescalings) as
\begin{equation} \label{eq:NV_JM}
q_t = -\partial_z^3 q -\overline{\partial}^3q +3\partial_z(q\nu)+3\overline{\partial}_z(q\bar{\nu}), \quad \mbox{where} ~~ \overline{\partial}_z \nu=\partial_z q.
\end{equation}
The NV\ equation \eqref{eq:NV_JM} generalizes the celebrated Korteweg-de Vries (KdV) equation
\[
q_{t}=-6qq_{x}-q_{xxx}%
\]
in the sense that, if $q(x_{1},t)$ solves KdV and $\nu_{x_{1}}(x_{1}%
,t)=-3q_{x_{1}}(x_{1},t)$, then $q\left(  x_{1},t\right)  $ solves NV.

The NV\ equation was introduced by\ Novikov and Veselov in
\cite{NV:1986,VN:1984} as one of a hierarchy of completely integrable
equations that generate isospectral flows for the two-dimensional
Schr\"{o}dinger operator at fixed energy $E$. Indeed, the Novikov-Veselov
equation \eqref{eq:NV} admits the Manakov Triple Representation \cite{Manakov:1976}
\begin{equation}
\dot{L}=\left[  A,L\right]  - BL \label{eq:ManakovTriple}%
\end{equation}
where%
\begin{align*}
L  &  =-\Delta+q-E,\\
A  &  =8\left(\partial^{3}+\overline{\partial}^{3}\right)+2\left(w\partial+\overline
{w}\overline{\partial}\right),\\
B  &  =-2\left(  \partial w+\overline{\partial}\overline{w}\right)  .
\end{align*}
Here $B$ is the operator of multiplication by the function $2\left(  \partial
w+\overline{\partial}\overline{w}\right)  $. That is, a pair $\left(
q,w\right)  $ solves the NV\ equation if and only if the operator equation
\eqref{eq:ManakovTriple} holds.

The Manakov triple representation implies that the NV\ equation is, formally
at least, a completely integrable equation. Thus one expects that, for a
suitable notion of \textquotedblleft scattering data for $L$ at fixed energy
$E$,\textquotedblright\ the associated scattering transform will linearize the flow.

For nonzero energy $E$ and potentials $q$ which vanish at infinity, the
scattering transform and inverse scattering method was developed by P.
Grinevich, R. G. Novikov, and S.-P. Novikov (see Kazeykina's thesis
\cite{Kazeykina:2012t} for an excellent survey and see
\cite{Grinevich:2000,GM:1988,GN:1987,GN:1988} for the original papers). Roughly
and informally, there is a scattering transform $\mathcal{T}$ which maps the
potential $q$ to scattering data that obey a linear equation if $q$ obeys the
NV\ equation, and an inverse scattering transform $\mathcal{Q}$ which inverts
$\mathcal{T}$, so that the function%
\begin{equation}
\label{eq:NV.ISM}
q(x,\tau)=\mathcal{Q}\left[  e^{i \tau \left(  \diamond^{3}+\left(  \overline
{\diamond}\right)  ^{3}\right)  }\mathcal{T}(q_{0})\right]  (x)
\end{equation}
solves the NV\ equation with initial data $q_{0}$. The inverse scattering method may
be visualized by the following commutative diagram:
\begin{equation}\label{diagram}
\begin{array}{c}
\begin{picture}(200,92)
\thinlines
 \put(30,75){\vector(1,0){130}}
 \put(0,72){$\mathbf{t}_0(k)$}
 \put(165,72){$\mathbf{t}_\tau(k)$}
 \put(55,80){\footnotesize{$\exp(i \tau(k^3+\overline{k}^3))(\,\cdot\,)$}}
 \put(7,15){\vector(0,1){47}}
 \put(-6,37){$\mathcal{T}$}
 \put(0,0){$q_0(z)$}
 \put(181,62){\vector(0,-1){47}}
 \put(184,37){$\mathcal{Q}$}
 \put(30,3){\vector(1,0){130}}
 \put(40,7){{\footnotesize Novikov-Veselov evolution}}
 \put(165,0){$q^{\mbox{\tiny \rm NV}}_{\tau}(z)$,}
\end{picture}
\end{array}
\end{equation}
where $\mathcal{T}$ and $\mathcal{Q}$ stand for the direct and inverse nonlinear Fourier transform, respectively, and the function $\mathbf{t}_\tau:\mathbb{C}\rightarrow \mathbb{C}$ is called the {\em scattering transform}.
In the case $E=0$, the
inverse scattering method was studied by Boiti et.~al.~\cite{BLMP:1987}, Tsai
\cite{Tsai:1993}, Nachman \cite{Nachman:1996}, Lassas-Mueller-Siltanen
\cite{LMS:2007}, Lassas-Mueller-Siltanen-Stahel \cite{LMSS:2011,LMSS:2012},
Music \cite{Music:2013}, Music-Perry \cite{MP:2013}, and Perry
\cite{Perry:2013}. 

Recently, Angelopoulos \cite{Angelopoulos:2013} used Bourgain's Fourier restriction method (see \cite{Bourgain:1993a,Bourgain:1993b} and the lectures of Tao \cite{Tao:2006})  together with subtle bilinear estimates on the nonlinearity to prove that the Novikov-Veselov equation at $E=0$ is locally well-posed in the Sobolev spaces $H^s(\mathbb{R}^2)$ for $s>1/2$. Here $H^s(\mathbb{R}^2)$ is the space of square integrable functions $u$ on $\mathbb{R}^2$ whose Fourier transforms $\hat{u}$ obey
$$ \|u\|_{H^s}^2 := \int (1+|\xi|)^{2s} \left\vert \hat{u}(\xi) \right\vert^2 \, d\xi < \infty.$$

Angelopoulos' results place the local well-posedness theory for this equation on a sound footing. The inverse scattering method is expected to elucidate--for a more restrictive but nonetheless rich class of initial data--the \emph{global} behavior of the solutions, including global existence in time, blow-up, and asymptotics. 

The case $E=0$ is intimately
connected with the following trichotomy of behaviors for the two-dimensional
Schr\"{o}dinger operator $L$ at zero energy.

\begin{definition}
\label{def:tri}The operator $L=-\Delta+q$ is said to be:\newline(i)
\emph{subcritical} if the operator $L$ has a positive Green's function and the equation $L\psi=0$ has
a strictly positive distributional solution, \newline(ii) \emph{critical} if $L\psi=0$ has a
bounded strictly positive solution but no positive Green's function, and\newline(iii) \emph{supercritical} otherwise.
\end{definition}

This distinction first arose in the study of Schr\"{o}dinger semigroups, i.e.,
the operators $e^{-tL}$ where $L=-\Delta+q$. Simon
\cite{Simon:1980,Simon:1981} (see also Simon's comprehensive review \cite{Simon:1982} on Schr\"{o}dinger semigroups) studied $L^{p}%
$-mapping properties of $e^{-tL}$ and asymptotics of $\left\Vert
e^{-tL}\right\Vert _{p,p}$, where $\left\Vert ~\cdot~\right\Vert _{p,p}$
denotes the operator norm as maps from $L^{p}$ to itself. Simon shows that
\[
\alpha_{p}(q)=\lim_{t\rightarrow\infty}t^{-1}\ln\left\Vert e^{-tL}\right\Vert
_{p,p}%
\]
is independent of $p\in\left[  1,\infty\right]  $. In the language of Schr\"{o}dinger semigroups,
a potential $q$ is:

(i) subcritical if $\alpha_{\infty}\left(  \left(  1+\varepsilon\right)
q\right)  =0$ for some $\varepsilon>0$,

(ii) critical if $\alpha_{\infty}\left(  q\right)  =0$ but $\alpha_{\infty
}\left(  \left(  1+\varepsilon\right)  q\right)  >0$ for all $\varepsilon>0$, and

(iii) supercritical if $\alpha_{\infty}(q)>0$. \newline 
Clearly, a sufficient condition for $q$ to be supercritical is that
$L$ have a negative eigenvalue.

In \cite{Murata:1984}, Murata showed that, for two-dimensional Schr\"{o}dinger
operators with potentials $q$ with $q(x)$ uniformly H\"{o}lder
continuous and $q(x)=\mathcal{O}\left(|x|^{-4-\epsilon} \right)$ for
some $\epsilon>0$, the trichotomy of behaviors for Schr\"{o}dinger
semigroups is equivalent to Definition \ref{def:tri}. Murata further
studied the existence and properties of positive solutions of the
Schr\"{o}dinger equation in \cite{Murata:1984}, and showed that for
his class of potentials, the trichotomy could be characterized as
follows: a potential is 

(i) subcritical if and only if $L\psi=0$ has a strictly positive solution of the form
$c \log(|x|)+d +\mathcal{O}\left(|x|^{-1}\right)$ with $c \neq 0$,

(ii) critical if $L\psi=0$ if and only if $L\psi=0$ has a strictly positive bounded solution, and

(iii) supercritical if $L\psi=0$ has no strictly positive solutions.

Extending Murata's result,
Gesztesy and Zhao \cite{GZ:1995} used Brownian motion techniques to prove the following optimal result for critical
potentials. Suppose that  $q$ is a real-valued measurable function with 
$$ \lim_{\alpha \downarrow 0} \left\{ \sup_{x \in \mathbb{R}^2} \int_{|x-y|\leq \alpha}
	\ln \left(|x-y|)^{-1} \right)  \left| q(y) \right| \, dy \right\}= 0 $$
and 
$$ \int_{|y| \geq 1} \ln(|y| ) |q(y) | \, dy < \infty.$$
Then $q$ is critical if and only if there exists a positive, bounded
distributional solution $\psi$ of $H\psi=0$. These two conditions mean
essentially that 
$$
q(x)=\mathcal{O}\left(|x|^{-2} \left(\ln(|x|)\right)^{-2-\epsilon}\right)$$
for some $\epsilon>0$. We refer the reader to \cite{GZ:1995} for
further references and history.

As we will see, corresponding to the trichotomy in Definition \ref{def:tri},
the scattering transform of $q$ is either mildly singular, nonsingular, or
highly singular. This is illustrated dramatically in the examples studied by
Music, Perry, and Siltanen \cite{MPS:2013}, described in Section
\ref{sec:zero_excep} below. One would expect the singularities of the
scattering transform to be mirrored in the behavior of solutions to
the NV\ equation. We will discuss the following conjecture, and some
partial results toward its resolution, in the last section of this
paper:

\begin{conjecture}
The Novikov-Veselov equation (\ref{eq:NV}) has a global solution for critical
and subcritical initial data, but its solution may blow up in finite time for
supercritical initial data.
\end{conjecture}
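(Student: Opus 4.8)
The plan is to prove the conjecture through the inverse scattering method encoded in the commutative diagram \eqref{diagram}, reducing the nonlinear evolution to a \emph{linear} flow on scattering data. Because the time evolution acts on $\mathbf{t}_0(k)$ by multiplication by $e^{i\tau(k^3+\overline{k}^3)}$, and since $k^3+\overline{k}^3=2\operatorname{Re}(k^3)$ is real, this multiplier is unimodular and therefore acts as an isometry on any $L^2$-based or pointwise-modulus-based function space. The entire difficulty thus migrates to two time-independent questions: (a) identifying, for each of the three classes in Definition \ref{def:tri}, a Banach space $X$ containing the scattering transform $\mathbf{t}_0=\mathcal{T}(q_0)$; and (b) establishing the mapping properties of the inverse transform $\mathcal{Q}$ on the orbit $\{e^{i\tau(k^3+\overline{k}^3)}\mathbf{t}_0:\tau\in\mathbb{R}\}$. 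If $\mathcal{Q}$ is defined and continuous on this entire orbit, then $q(\cdot,\tau)=\mathcal{Q}[e^{i\tau(k^3+\overline{k}^3)}\mathbf{t}_0]$ furnishes a global solution; if $\mathcal{Q}$ degenerates at some finite $\tau^{*}$, this signals blow-up.

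For the critical case I would build on the rigorous treatment of conductivity-type potentials, where $\mathbf{t}_0$ is bounded, continuous, and decaying at infinity, and the reconstruction reduces to a uniquely solvable $\overline{\partial}_k$ equation for the normalized CGO solutions $\mu(z,k)$. The key input is that the unimodular phase preserves the space in which $\mathbf{t}_0$ lives and does not disturb the small-$k$ normalization that makes the $\overline{\partial}_k$ system Fredholm of index zero with trivial kernel; global existence then follows for all $\tau$. For the subcritical case the obstruction is the mild singularity of $\mathbf{t}_0$ at $k=0$, mirroring the $c\log|x|+d+\mathcal{O}(|x|^{-1})$ asymptotics of the positive solution of $L\psi=0$. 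I would introduce weighted spaces quantifying exactly this logarithmic singularity, show that $\mathcal{T}$ maps subcritical potentials into such a space, and prove that the inverse $\overline{\partial}_k$ construction remains uniquely solvable despite the singularity, so that the unimodular phase again yields a global flow.

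The supercritical case is where both the interest and the main obstacle lie. Here $\mathbf{t}_0$ is highly singular: the CGO solutions fail to exist at exceptional points, so $\mathbf{t}_0$ carries poles or stronger singularities, as exhibited in the examples of Music--Perry--Siltanen. The plan is to show that, although $|e^{i\tau(k^3+\overline{k}^3)}|=1$, the oscillatory phase interacts destructively with these singularities: one tracks a family of exceptional values of $k$ through the flow and argues that the associated inhomogeneous $\overline{\partial}_k$ (Bukhgeim-type) integral equation loses unique solvability at a finite critical time $\tau^{*}$, at which the reconstructed potential ceases to be a well-defined bounded function. A model computation with an explicit exceptional potential should reveal the mechanism and the scaling of $\tau^{*}$.

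The hardest step is establishing rigorous mapping properties of $\mathcal{Q}$ on singular, time-evolved scattering data, and in particular translating an abstract loss of solvability of the $\overline{\partial}_k$ system into genuine finite-time blow-up of $q(\cdot,\tau)$ as a solution of the NV equation. The subcritical global-existence step is also delicate, since the logarithmic singularity at $k=0$ sits precisely at the boundary between the well-controlled conductivity-type theory and the genuinely singular supercritical regime, and the weighted-space estimates must be sharp enough to survive the nonlinear reconstruction for all time.
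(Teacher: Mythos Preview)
The statement you are trying to prove is labeled in the paper as a \emph{Conjecture}, and the paper contains no proof of it. It is explicitly discussed as an open problem (see Section~\ref{sec:open}): the critical case is established rigorously in \cite{Perry:2013}, the subcritical case is described as work in progress \cite{Music:2013,MP:2013}, and for the supercritical case the paper states outright that ``it is not known how to extend the inverse scattering formalism to potentials with nonempty exceptional sets.'' So there is no paper proof to compare against; your proposal is an outline for attacking an open problem.

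That said, your outline for the critical and subcritical cases is broadly in line with the existing and in-progress work the paper cites, and you identify the correct difficulty (the behavior of $\mathbf{t}$ near $k=0$). Your supercritical mechanism, however, has a genuine gap. You argue that the unimodular multiplier $e^{i\tau(k^3+\overline{k}^3)}$ is an isometry and preserves the modulus of $\mathbf{t}_0$ pointwise; but then you claim that ``the oscillatory phase interacts destructively with these singularities'' so that the $\overline{\partial}_k$ system ``loses unique solvability at a finite critical time $\tau^*$.'' These two statements are in tension. The singularities of $\mathbf{t}_\tau$ sit at exactly the same $k$-locations with exactly the same strength for every $\tau$, so any failure of $\mathcal{Q}$ caused by the singularity structure of the data is present already at $\tau=0$ and does not switch on at a distinguished finite time. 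If the inverse map is ill-defined on singular data, the diagram~\eqref{diagram} simply does not close for supercritical $q_0$, which is the obstruction the paper flags; it does not give you a mechanism that selects a blow-up time. The known blow-up examples (Ta\u{\i}manov--Tsar\"ev, the ring soliton of Example~\ref{ex:3}) are produced by direct PDE or Moutard-transformation arguments, not by watching the inverse scattering map degenerate. A viable supercritical argument would need either a genuinely new extension of $\mathcal{T}$ and $\mathcal{Q}$ to data with exceptional sets, or a direct PDE argument linking supercriticality to blow-up, neither of which your proposal supplies.
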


To elucidate this conjecture, it is helpful to recall how the scattering
transform for Schr\"{o}dinger's equation is connected with Calder\'on's inverse
conductivity problem \cite{Calderon:1980} (see \cite{AP:2006} for the solution to Calderon's problem for $L^\infty$ conductivites and for references to the extensive literature on this problem). Critical potentials are
also known in the literature as \textquotedblleft potentials of
conductivity type\textquotedblright\ because of their connection with
the Calder\'on inverse conductivity problem: suppose one wishes to
determine the conductivity $\sigma$ of a bounded plane region 
$\Omega$ by boundary measurements. The potential $u$ of $\Omega$ with voltage
$f$ on the boundary is determined by the equation%
\begin{align*}
\nabla\cdot\left(  \sigma\nabla u\right)   &  =0\\
\left.  u\right\vert _{\partial\Omega}  &  =f
\end{align*}
Calder\'on's problem is to reconstruct $\sigma$ from knowledge of the \emph{Dirichlet-to-Neumann} map, defined as follows. If $\Omega$ has smooth boundary then the above boundary value problem has a unique solution $u$ for given $f \in H^{1/2}(\partial \Omega)$, so that $$\Lambda_\sigma f = \sigma \left. \frac{\partial u}{\partial \nu} \right|_{\partial \Omega}$$
is uniquely determined. The map $\Lambda_\sigma: H^{1/2}(\partial \Omega) \rightarrow H^{-1/2}(\partial \Omega)$ is the \emph{Dirichlet-to-Neumann map}. 

This boundary value problem is equivalent, under the change of variables
$u=\sigma^{-1/2}\psi$, to the Schr\"{o}dinger problem%
\[
\Delta \psi-q \psi=0
\]
where $q=\sigma^{-1/2}\Delta\left(  \sigma^{1/2}\right)  $. A potential of
this form for strictly positive $\sigma\in L^{\infty}$ (and some additional
regularity) is called a \emph{potential of conductivity type}. More precisely, the class of potentials originally studied by Nachman \cite{Nachman:1996} is as follows. We denote by $L^p_\rho(\mathbb{R}^2)$ the space of measurable functions with norm $\| f \|_{L^p_\rho} =\left \| \langle x \rangle^\rho f\right \|_p$.

\begin{definition}
\label{def:cond.Nachman}
Let $p\in( 1,2)$ and  $\rho>1$. A real-valued measurable function $q \in L^p_\rho(\mathbb{R}^2)$ is a \emph{potential of conductivity type}  if there is a function $\sigma \in L^\infty(\mathbb{R}^2)$ with $\sigma(x)\geq c_0>0$ so that $q= \sigma^{-1/2}\Delta ( \sigma^{1/2}) $ in the sense of distribution derivatives.
\end{definition}
A real-valued potential in $L^p_\rho(\mathbb{R}^2)$ is of conductivity type if and only if it is critical: the bounded,
positive solution to $\Delta\psi-q\psi=0$ is exactly $\psi=\sigma^{1/2}$. 

As shown by Murata \cite{Murata:1984,Murata:1986}, critical potentials are
very unstable: if $w\in\mathcal{C}_{0}^{\infty}\left(  \mathbb{C}\right)  $ is
a nonnegative bump function and $q_{0}$ is a critical potential, the potential
$q_{\lambda}=q_{0}+\lambda w$ is supercritical for any $\lambda>0$. This means
that the set of critical potentials is nowhere dense in any reasonable function
space! Music, Siltanen, and Perry \cite{MPS:2013} studied the behavior of the
scattering transform for families of this type when $q_{0}$ and $w$ are both
smooth, compactly supported, and radial. The corresponding scattering
transforms are mildly singular for subcritical potentials, regular for
critical potentials, and have circles of singularities for supercritical potentials.

The NV\ equation may be solved by inverse scattering for subcritical and
critical potentials, but it is not yet clear how to construct a solution by
inverse scattering for supercritical potentials.

In this article, we will focus primarily on the Novikov-Veselov equation at zero energy. For the Novikov-Veselov equation at nonzero energy, we refer the reader to the thesis of Kazeykina \cite{Kazeykina:2012t} and to the papers \cite{Kazeykina:2011,Kazeykina:2012,KN:2011a,KN:2011b,KN:2011d} for other recent work on qualitative properties of solutions and for further references to the literature.
We will report on recent progress on both the theoretical and the numerical
analysis of this equation, and pose a number of open problems. In section \ref{sec:background}
we review the history of the inverse scattering method, the dispersion relation, symmetries and scaling properties, and conservation laws for the NV equation. In section \ref{sec:inverse},
we give an exposition of the inverse scattering method for the NV equation at zero energy
from the point of view of the Manakov triple
representation, treating with full mathematical rigor the case of ``smooth potentials of 
conductivity type'' (see Definition \ref{def:cond}).  We discuss the numerical implementation of the maps $\mathcal{T}$ and $\mathcal{Q}$ in sections \ref{sec:scatcomp} and \ref{sec:InvScat}, respectively. In section \ref{sec:Special}, we discuss special closed-form solutions of the NV equation including ring solitons and breathers.  In section \ref{sec:zero_excep}, the computational inverse scattering method is used to study zero-energy exceptional points and the relationship between supercritical, critical, and subcritical potentials.  Finally, in section \ref{sec:open}, we
discuss open problems. In an appendix, we collect some useful tools for the mathematical analysis of the direct and inverse scattering maps.

\emph{Notation}. In what follows, we use the variable $t$ to denote time except when discussing the solution of NV via the inverse scattering method. In this case, $\tau$ denotes time in order to distinguish $t$ from $\mathbf{t}$, the scattering transform. We denote the spatial variable by $z=x+iy$. Functions $q(z)$, $q(z,t)$, etc., are  functions of $x$ and $y$ but are generally \emph{not} analytic in $z$. 

\section{Background for the Zero-Energy NV\ Equation}
\label{sec:background}

First, we summarize the historical progress that led to the completion of the diagram (\ref{diagram}) for the NV equation at zero energy.
In 1987, Boiti, Leon, Manna and Pempinelli  \cite{BLMP:1987} studied the evolution under the assumption that $q_0$ is such that the solution $q^{\mbox{\tiny \rm NV}}_{\tau}$ to (\ref{eq:NV}) exists and does not have exceptional points and established that  the scattering data evolves as ${\mathcal{T}}(q^{\mbox{\tiny \rm NV}}_{\tau})=e^{i\tau(k^3+\overline{k}^3)}{\mathcal{T}}(q_0)$. 
In 1994, Tsai \cite{Tsai:1993} considered a certain class of small and rapidly decaying initial data (which excludes conductivity-type potentials) and assumed that $q_0$ has no exceptional points and that $q_{\tau}$ is well-defined. Under such assumptions, he then showed that  $q_{\tau}$ is a solution of the Novikov-Veselov equation (\ref{eq:NV}). 
In 1996, Nachman \cite{Nachman:1996} established that initial data of conductivity type does not have exceptional points and the scattering data ${\mathcal{T}}(q_0)$ is well-defined.
Nachman's work paved the way for rigorous results: all studies about diagram (\ref{diagram}) published before \cite{Nachman:1996} were formal as they had to assume the absence of exceptional points without specifying acceptable initial data.
In 2007, \cite{LMS:2007} established for smooth, compactly supported conductivity-type initial data with $\sigma\equiv 1$ outside supp$(q_0)$ that there is a well-defined continuous function $q_{\tau}:{\mathbb R}^2\rightarrow \mathbb{C}$ from the inverse scattering method satisfying the estimate $|q_\tau(z)|\leq C(1+|z|)^{-2}$ for all $\tau> 0$.  In \cite{LMSS:2011} it was shown that an initially radially-symmetric
conductivity-type potential evolved under the ISM does not have
exceptional points and is itself of conductivity-type.  Note that in \cite{MPS:2013} the set of conductivity type potentials is shown to be unstable under $C^{\infty}_0$ perturbations.  In
\cite{LMSS:2012}  evolutions computed from a numerical
implementation of the inverse scattering method of rotationally symmetric, compactly
supported conductivity type initial data are compared to
evolutions of the NV equation computed from a semi-implicit finite-difference
discretization of NV and are found to agree with high precision.  This
supported the integrability conjecture that was then established in
\cite{Perry:2013} for a larger class of initial data using the inverse scattering map for the Davey-Stewartson equation and Bogdanov's Miura transform.

In Section \ref{sec:Special} of this paper, we present several closed-form solutions of the NV equation.  We briefly review earlier constructions of solutions for the NV equation without presenting an exhaustive list. Grinevich, Manakov and R.~G.~Novikov constructed solition solutions using
nonlocal Riemann problem techniques in \cite{Grinevich:1986,Grinevich:2000,GM:1988,GN:1985,GN:1986} for nonzero energy and with small initial data. Also,
solitons are constructed by Grinevich using rational potentials in \cite{Grinevich:1986} (see also the survey \cite{Grinevich:2000}, by Tagami using the Hirota method in \cite{tagami:1989}, by Athorne and Nimmo using Moutard
transformation in \cite{ANimmo:1991}, by Hu and Willox using a nonlinear superposition formula in \cite{HW:1996}, by Xia, Li and Zhang using hyperbola function method and
Wu-elimination method in \cite{Xia:2001}, by Ruan and Chen using separation of variables in \cite{RuanChen2001,RuanChen,RuanChen2003,ZFC:2005}, and by J.-L.~Zhang, Wang, Wang and Fang using the
homogeoneous balance principle and B\"acklund transformation in \cite{HW:1996}. Lump solutions are constructed by Dubrovsky and Formusatik using the $\overline{\partial}$-dressing method in
\cite{dubrovsky}. Dromion solutions are constructed by Ohta and \"Unal using Pfaffians in \cite{ohta,unal}. The Darboux transformation is used by Hu, Lou, Liu, Rogers, Konopelchenko,
Stallybrass and Schief to construct solutions in \cite{HuLouLiu,rogers}. Taimanov and Tsar\"{e}v \cite{TT:2007,TT:2008a,TT:2008b,TT:2010a,TT:2010b} use the Moutard transformation to construct examples of Schr\"{o}dinger operators $L$ with $L^2$ eigenvalues at zero energy, and solutions of NV which blow up in finite time.  In \cite{zhang,Zheng:2005} Zheng, J.-F.~Zhang and Sheng explore chaotic and fractal properties of solutions to the NV equation.

\subsection{Dispersion, group velocity and phase velocity}

Solitons form when there is a balance between nonlinearity and dispersion.  The dispersion relation is the relation that gives the frequency as a function of the wave vector $(k_1,k_2)$.  To find the dispersion relation for the NV equation, consider the linear part of the equation
\begin{equation}\label{eq:linearNV} 
q_t = -\frac{1}{4}q_{xxx} + \frac{3}{4}q_{xyy} 
\end{equation} 
The plane wave functions $q(x,y,t) = e^{k_1x + k_2y - \omega t}$ are solutions to \eqref{eq:linearNV} provided
\begin{equation}\label{NVdispersion} \omega = -\frac{1}{4}k_1^3 + \frac{3}{4}k_1k_2^2.
\end{equation}
Equation \eqref{NVdispersion} defines the dispersion relation for the
NV equation. 


\begin{figure}[!ht]
\centering
\begin{tabular}{cc}
\includegraphics[width=0.5\linewidth,height=0.3\textheight]{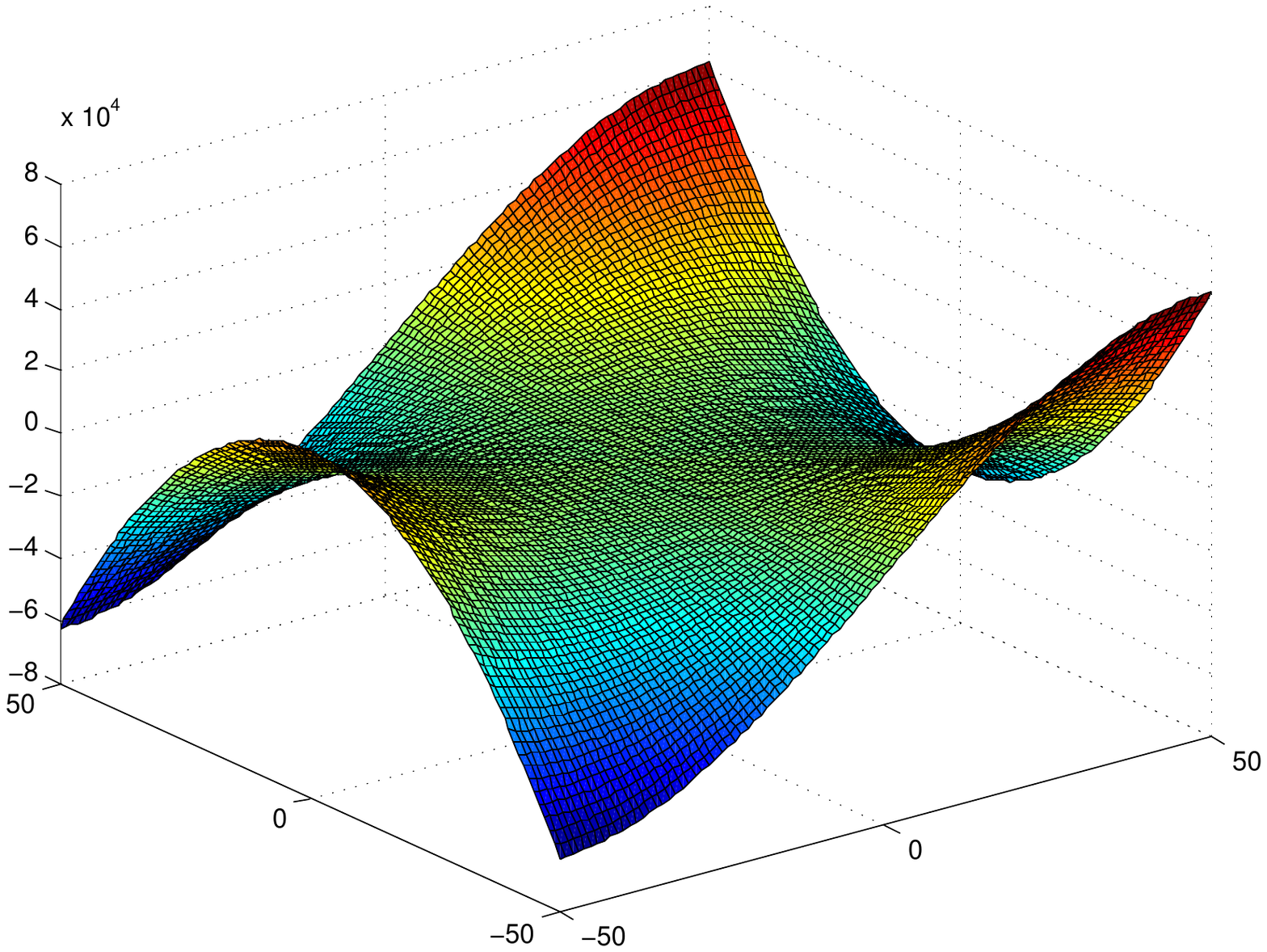} &
\includegraphics[width=0.5\linewidth,height=0.3\textheight]{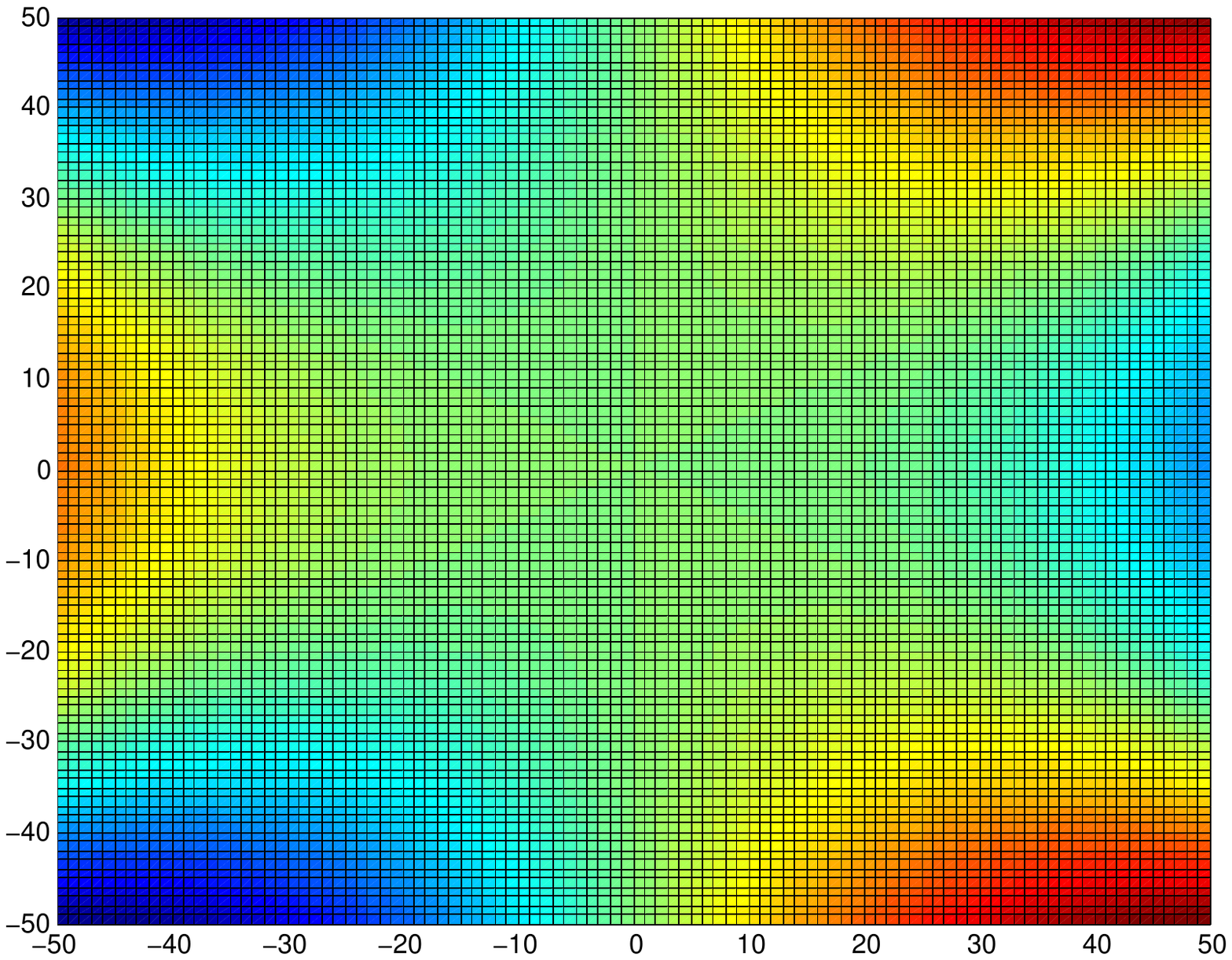}
\end{tabular}
\vskip -0.70in
\caption[Surface plot (left) and heat map (right) of the dispersion relation $\omega(k)=k_1^2/4+3k_1 k_2^2/4$ for the NV equation]{Surface plot (left) and heat map (right) of the NV dispersion relation $\omega(k)=k_1^2/4+3k_1 k_2^2/4$}
\label{fig:DispersionNV}
\end{figure}

The phase velocity, $\textbf{c}_p$, which gives the velocity of the wavefronts, is defined by $\textbf{c}_p = \frac{\omega(\textbf{k})}{|\textbf{k}|^2}(k_1,k_2)$ and for the NV equation is \begin{equation}\label{phsvelLinVN} \textbf{c}_p = \frac{k_1^3 - 3k_1k_2^2}{4(k_1^2 + k_2^2)^{3/2}}(k_1,k_2).\end{equation}
 The group velocity, which gives the velocity of the wave packet, is \[ \textbf{c}_g  \equiv \nabla\omega = \frac{3}{4}\left(-k_1^2 + k_2^2,2k_1k_2\right).\]
The group velocity is bounded below by 0, and the sign of the phase
velocity depends on sgn($k_1^3 - 3k_1k_2^2)$.

\subsection{Symmetries and Scaling}
To understand how scaling of the dependent and independent variables
change the NV equation, let us first consider the auxilliary
$\overline{\partial}$ equation in equation \eqref{eq:NV_JM} in the form 
\[\overline{\partial}\nu = \partial q, \quad \nu = v + i w. \]
Note that under the transformation $r(x,y,t) \equiv \gamma\nu(\alpha t,\beta x, \beta y)$ and  $s(x,y,t) \equiv \gamma q(\alpha t,\beta x, \beta y),$   
the $\overline{\partial}$ equation  remains unchanged, i.e
$\overline{\partial}\nu = \partial q$ if and only if
$\overline{\partial} r = \partial s$. 
Then $r_x(x,y,t)  = \beta\gamma \nu_x(\alpha t,\beta x, \beta y)$ and $r_y(x,y,t)  = \beta\gamma \nu_y(\alpha t,\beta x, \beta y)$.

Now, we examine the main equation as presented in equation \eqref{eq:NV_JM}. Note that
$s_t(x,y,t)  = \alpha\gamma q_t(\alpha t,\beta x, \beta y), $
$s_{xxx}(x,y,t)  = \beta^3\gamma q_{xxx}(\alpha t,\beta x, \beta y), $
$s_{xyy}(x,y,t)  = \beta^3\gamma q_{xyy}(\alpha t,\beta x, \beta y),$ and
$(qv)_x + (qw)_y = \gamma^2\beta((s\Re(r))_x + (s\Im(r)_y)$.
Assuming $q$ is a solution to the NV equation \eqref{eq:NV}, we find 
\begin{align*} 
4q_t & = -u_{xxx} + 3q_{xyy} + 3(qv)_x + 3(qw)_y  \\ \Longrightarrow 
\frac{4}{\alpha\gamma}s_t & = -\frac{1}{\beta^3\gamma}s_{xxx} + \frac{3}{\beta^3\gamma}s_{xyy} + \frac{3}{\beta\gamma^2}(s\Re(r))_x + \frac{3}{\beta\gamma^2}(s\Im(r))_y.  
\end{align*} Multiplying by $\alpha\gamma$ leads to\begin{equation} 4s_t  = -\frac{\alpha}{\beta^3}s_{xxx} + \frac{3\alpha}{\beta^3}s_{xyy} + \frac{3\alpha}{\beta\gamma}((sr_1)_x + (sr_2)_y).\label{eq:ReScaleNV} \end{equation} 
 
The table below shows the possible sign conventions possible for each term of the right hand side of equation \eqref{eq:ReScaleNV}.  

\begin{center}
    \begin{tabular}{ | l | l | l | l |}
    \hline
    $\alpha$ & $\beta$ & $\gamma$ & Signs in \eqref{eq:ReScaleNV} \\ \hline
    + & + & + & -\hfill +\hfill + \\ \hline
    - & - & + & -\hfill +\hfill + \\ \hline
    - & + & + & +\hfill -\hfill - \\ \hline
    + & - & + & +\hfill -\hfill - \\ \hline
    + & + & - & -\hfill +\hfill - \\ \hline
    - & - & - & -\hfill +\hfill - \\ \hline
    - & + & - & +\hfill -\hfill + \\ \hline
    + & - & - & +\hfill -\hfill + \\ \hline  
    \end{tabular}
\end{center}
Thus, there is a fixed ratio of -3 of the coefficients of the linear spatial terms, and
any other coefficient is possible by proper rescaling of independent and dependent variables.

We also consider under what rotations the Novikov-Vesolov is invariant.  Writing the NV equation as
\begin{align}
\label{eq:NVComplex2}  q_t & = -\partial^{3}q - \overline{\partial^{3}}q +
3\partial(q\nu) + 3\overline{\partial}(q\overline{\nu}),\\
 \label{eq:Dbar2} \overline{\partial}\nu & = \partial q,
\end{align}
it is easy to see by conjugating \eqref{eq:NVComplex2}, that if $q$ is real at time $t_0$, then $q$ stays real.

To consider rotations let 
$x = x'\cos(\theta) - y'\sin(\theta)$ and $y = x'\sin(\theta) + y'\cos(\theta).$
so that
\begin{align*}
\partial_z &= e^{i\theta}{\partial}_{z'}.\\
\overline{\partial}_z &=e^{-i\theta}\overline{\partial}_{z'}
\end{align*} 

Under such rotations, equation \eqref{eq:NVComplex2} becomes
\begin{equation} q_t = e^{i\theta}\partial_{z'}(\nu' q) + e^{i\theta}\overline{\partial}_{z'}(\overline{\nu'}q) - e^{-3i\theta}\overline{\partial}^3_{z'}q - e^{-3i\theta}\partial^3_{z'}q
\end{equation}
where $\nu' = e^{-i\theta}\nu$.  The auxiliary equation becomes \[ e^{i\theta}\overline{\partial}_{z'}\nu = e^{i\theta}\partial_{z'}q\] or \[ \overline{\partial}_{z'}\nu' = e^{-3i\theta}\partial_{z'}q,\] and so we have invariant solutions under rotations of $2\pi/3$ and $4\pi/3$.  This shows that if a solution to the NV equation has this symmetry, it must be preserved under the evolution.  It does not mean that all solutions will display this type of symmetry.

\subsection{Conservation Laws for the NV equation}

In order to present the conservation laws for \eqref{eq:NV}, we need to recall some ideas from the inverse scattering method. A rigorous derivation of the conservation laws for smooth potentials of
conductivity type is given below in section \ref{sec:inverse}.  

Suppose that $q \in L^p(\mathbb{R}^2)$ for some $p \in (1,2)$. The scattering data, or \textit{scattering transform} $\mathbf{t}:\mathbb{C}\rightarrow\mathbb{C}$ of $q$ is defined via 
Faddeev's \cite{Faddeev:1965} complex geometric optics (CGO) solutions,  which we now recall. Let $z = x + iy$ and $k = k_1 + ik_2$. For $k \in \mathbb{C}$ with $k \neq 0$, the function $\psi(z,k)$ is the exponentially growing solution of the Schr{\"o}dinger equation 
\begin{equation} \label{eq:CGO-1} 
(-\Delta + q)\psi(\cdot,k) = 0 
\end{equation}
with asymptotic behavior $\psi(z,k)\sim e^{ikz}$ in the following sense: for $\tilde{p}$ defined by 
$ \frac{1}{\tilde{p}} = \frac{1}{p}-\frac{1}{2}$,  we have
\begin{equation} \label{asy_cond_psi}
  e^{-ikz}\psi(z,k)-1 \in L^{\tilde{p}}(\mathbb{R}^2) \cap L^\infty({\mathbb R}^2). 
\end{equation}  
It is more convenient to work with the {\em normalized complex geometric optics solutions} (NCGO) $\mu(z,k)$ defined by
\begin{equation} \label{NCGO}
\mu(z,k) = \psi(z,k)e^{-ikz}.
\end{equation}

A straightforward computation shows that $\mu$ obeys the equation
\begin{align} \label{eq:mu}
\overline{\partial} \left(\partial+ik\right)\mu = \frac{1}{4}q \mu, \qquad \mu(\cdot,k)-1 &\in L^{\tilde{p}} \cap L^\infty.
\end{align} 

One can reduce the problem \eqref{eq:mu} to an integral equation of Fredholm type (see the discussion in section \ref{sec:inverse}). Faddeev's Green's function $g_k$ is the fundamental solution for the equation 
$$-4\overline{\partial}(\partial+ik)u=f$$
(the factor of $-4$ is chosen so that, if $k=0$, the equation reduces to $-\Delta u = f$ whose fundamental solution is the logarithmic potential; see Appendix \ref{sec:Faddeev} for details). One has
\begin{equation}
\label{eq:mu.int}
\mu = 1 - g_k*(q \mu)
\end{equation}
(where $*$ denotes convolution) and it can be shown that the operator $\varphi \mapsto g_k*(q\varphi)$ is compact on $L^{\tilde{p}}$.  Thus, for given $k$, the solution $\mu(z,k)$ exists if and only if it is unique. 

It is known that such solutions exist for any $q \in L^p$ provided that $|k|$ is sufficiently large.  In general, however, the equation \eqref{eq:mu} need not have a unique solution for every $k$.
 
Points for which uniqueness fails, i.e., points for which the homogeneous problem
\begin{align*}
\overline{\partial} \left(\partial+ik\right)\mu &= \frac{1}{4}q \mu, \\
\mu(\cdot,k) &\in L^{\tilde{p}} \cap L^\infty.
\end{align*}
has a nontrivial solution are called \emph{exceptional points}. It is known that the exceptional points form a bounded closed set in $\mathbb{C}$. Nachman \cite{Nachman:1996} proved that the exceptional set is empty for potentials of conductivity type; more recently, Music \cite{Music:2013} has shown that the same is true for subcritical potentials. We discuss this further in section \ref{sec:zero_excep}.

Let%
\begin{equation}
\label{eq:ek}
e_{k}(z)=e^{i\left(  kz+\overline{k}\overline{z}\right)  }.%
\end{equation}
Then, if $q$ decays rapidly at infinity (say $q \in \mathcal{S}(\mathbb{R}^n)$),
the function $\mu(z,k)$ obeys the large-$z$ asymptotic formula%
\begin{equation}
\mu(z,k)\sim1+\frac{1}{4\pi ikz}s(k)-\frac{e_{-k}(z)}{4\pi i\overline
{k}\overline{z}}\mathbf{t}\left(  k\right)  +\mathcal{O}\left(  \left\vert
z\right\vert ^{-2}\right)  \label{eq:mu.asy}%
\end{equation}
where%
\begin{align}
\mathbf{t}(k)  &  =\int e_{k}(z)q(z)\mu(z,k)~dz,\label{eq:t}\\
s(k)  &  =\int q(z)\mu(z,k)~dz. \label{eq:s}%
\end{align}
Note that $s(k)$ and $\mathbf{t}(k)$ are \emph{always} defined for large $|k|$, whether the potential $q$ is subcritical, critical, or supercritical.

The asymptotic formula (\ref{eq:mu.asy}) is a consequence of the following
simple lemma.

\begin{lemma}
\label{lemma:mu.exp.z}Suppose $k\in\mathbb{C}$ and $k\neq0$,that $p>2$ and
suppose that $u\in C^{2}\left(  \mathbb{C}\right)  \cap L^{p}\left(
\mathbb{C}\right)  $ satisfies%
\[
-4\overline{\partial}\left(  \partial+ik\right)  u=f
\]
for $f\in\mathcal{S}\left(  \mathbb{C}\right)  $. Then%
\begin{equation}
u(z)\underset{\left\vert z\right\vert \rightarrow\infty}{\sim}\sum_{\ell\geq
0}\frac{a_{\ell}}{z^{\ell+1}}+e_{-k}(z)\frac{b_{\ell}}{\overline{z}^{\ell+1}}
\label{eq:FGF.exp}%
\end{equation}
where%
\begin{align*}
a_{0}  &  =\frac{1}{4\pi ik}\int f(z)~dz,\\
b_{0}  &  =\frac{1}{4\pi i\overline{k}}\int e_{k}(z)f(z)~dz.
\end{align*}

\end{lemma}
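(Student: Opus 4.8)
The plan is to reduce the statement to the single case $u=g_k*f$, where $g_k$ is Faddeev's Green's function for $-4\overline{\partial}(\partial+ik)$ (see Appendix~\ref{sec:Faddeev}), and then read off the asymptotics of $u$ from those of the kernel $g_k$. First I would check that $g_k*f$ again lies in $C^2(\mathbb{C})\cap L^p(\mathbb{C})$: smoothness is inherited from $f\in\mathcal{S}(\mathbb{C})$, and the bound $g_k(z)=O(|z|^{-1})$ at infinity puts $g_k*f$ in $L^p$ precisely because $p>2$. The difference $v:=u-g_k*f$ then solves the homogeneous equation $\overline{\partial}(\partial+ik)v=0$ with $v\in L^p$. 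To see $v\equiv 0$, I would use the factorization $(\partial+ik)=e^{-ikz}\,\partial\,e^{ikz}$ (valid since $\partial z=1$, $\overline{\partial}z=0$): the homogeneous equation forces $(\partial+ik)v$ to be entire, and with $w:=e^{ikz}v$ one has $\partial w=e^{ikz}(\partial+ik)v$, so a Liouville-type argument—using that $v\in L^p$ decays while $e^{-ikz}$ grows exponentially in a half-plane—forces first $(\partial+ik)v\equiv0$ and then $w$, hence $v$, to vanish. Thus it suffices to expand $g_k*f$.

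The second ingredient is the large-$z$ expansion of the kernel itself, $g_k(z)\sim\sum_{\ell\ge0}\big(\alpha_\ell\,z^{-\ell-1}+e_{-k}(z)\,\beta_\ell\,\overline{z}^{-\ell-1}\big)$ with leading coefficients $\alpha_0=1/(4\pi i k)$ and $\beta_0=1/(4\pi i\overline{k})$; this is exactly the $f=\delta$ case of the lemma and is the analytic heart of the matter, derived in Appendix~\ref{sec:Faddeev} from the explicit Faddeev kernel. Note that $kz+\overline{k}\,\overline{z}=2\re(kz)$ is real, so $|e_{-k}(z)|=1$ and the second family of terms genuinely decays like $|z|^{-\ell-1}$ rather than growing.

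The main computation is then the convolution expansion. Writing $(g_k*f)(z)=\int g_k(z-\zeta)f(\zeta)\,d\zeta$, I would split the integral into a near region $|\zeta|\le|z|/2$ and a far region $|\zeta|>|z|/2$. On the far region the Schwartz decay of $f$ dominates the local logarithmic singularity and $O(|z|^{-1})$ decay of $g_k$, giving a contribution that is $O(|z|^{-N})$ for every $N$. On the near region I would insert the kernel expansion, use the key identity $e_{-k}(z-\zeta)=e_{-k}(z)\,e_k(\zeta)$ (with $e_k$ as in \eqref{eq:ek}), and Taylor-expand $(z-\zeta)^{-\ell-1}$ and $(\overline{z-\zeta})^{-\ell-1}$ in the small quantities $\zeta/z$ and $\overline{\zeta}/\overline{z}$. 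Integrating term by term produces the two asserted families, with coefficients given by moments of $f$ against $1$ and against $e_k$; the leading terms are $\alpha_0\,z^{-1}\int f\,d\zeta$ and $e_{-k}(z)\,\beta_0\,\overline{z}^{-1}\int e_k f\,d\zeta$, which are exactly $a_0\,z^{-1}$ and $e_{-k}(z)\,b_0\,\overline{z}^{-1}$ with the stated $a_0$ and $b_0$. All the moment integrals converge absolutely because $f\in\mathcal{S}$ and $|e_k|\equiv1$.

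The hard part will be twofold: establishing the two-exponent expansion of $g_k$ with the correct constants (the content supplied by Appendix~\ref{sec:Faddeev}), and making the term-by-term convolution expansion rigorous when $f$ is only Schwartz rather than compactly supported. The latter is handled by the near/far split together with uniform Taylor-remainder estimates of the form $\big|(z-\zeta)^{-\ell-1}-(\text{partial sum})\big|\lesssim |z|^{-\ell-1}(|\zeta|/|z|)^{M}$ valid on $|\zeta|\le|z|/2$; by contrast the oscillatory exponentials create no difficulty since they have modulus one. I do not expect the uniqueness step to be serious, but it must be included so that the coefficients $a_0,b_0$ are genuinely well defined for \emph{every} $u$ satisfying the hypotheses, not merely for the particular solution $g_k*f$.
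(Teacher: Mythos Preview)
Your approach is essentially the same as the paper's: reduce to $u=g_k*f$ and then expand the convolution using the large-$z$ asymptotics of $g_k$ from the Appendix. The paper is terser---it simply asserts that ``the conditions on $u$ imply that $u=g_k*f$'' and then substitutes \eqref{eq:gk.exp} into the convolution, noting that integrals of the form $\int (z-z')^{-j}f(z')\,dz'$ themselves expand in inverse powers of $z$---whereas you spell out the uniqueness argument and the near/far split; but the strategy and the key inputs are identical.
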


\begin{proof}
The conditions on $u$ imply that
\[
u=g_{k}\ast f
\]
so using the asymptotic expansion (\ref{eq:gk.exp}) for $g_k$ we obtain%
\begin{align}
u(z)  &  =-\frac{1}{4\pi}\sum_{j=0}^{N}%
{\displaystyle\int}
\left[  \frac{j!}{\left(  ik(z-z')\right)  ^{j+1}}+e^{-i\left(  kz+\overline
{k}\overline{z}\right) } 
\frac{j!}{\left(-i\overline{k}\left(\overline{z-z'}\right)\right)
^{j+1}}\right]  f(z^{\prime})~dz^{\prime}\label{eq:u.exp.pre}\\
&  +\mathcal{O}\left(  \left\vert z\right\vert ^{-N-2}\right) \nonumber
\end{align}
It is not difficult to see that for $f\in\mathcal{S}\left(  \mathbb{C}\right)
$ and any positive integer $N$, the expansion
\[
\int\frac{1}{\left(  z-z^{\prime}\right)  ^{j}}f(z^{\prime})~dz^{\prime}%
=\sum_{\ell=0}^{N}c_{\ell,j}z^{-j-\ell}+\mathcal{O}\left(  \left\vert
z\right\vert ^{-N-1-j}\right)
\]
holds, with an analogous expansion for the terms involving $\overline
{z}-\overline{z^{\prime}}$. The existence of the expansion (\ref{eq:FGF.exp}) is
immediate. The leading terms come from the $j=0$ term of (\ref{eq:u.exp.pre}).
\end{proof}

If $q(z,\tau)$ solves the NV equation at zero energy, it can be shown that $\mathbf{t}(\cdot,\tau)$, the scattering transform of $q(\cdot,\tau)$, is then given by 
\begin{equation}
\label{eq:t.motion}
\mathbf{t}(k,\tau) = \mathbf{m}(k,\tau)\mathbf{t}(k,0),
\end{equation}
where 
$$\mathbf{m}(k,\tau) = \mbox{exp}(i\tau(k^3 + \overline{k}^3)).$$ 
On the other hand, 
\begin{equation}
\label{eq:s.motion}
s(k,\tau) = s(k,0).
\end{equation}
Here $\mathbf{t}(k,\tau)$ and $s(k,\tau)$ are computed from the solution $\mu(z,k,\tau)$ of
\begin{equation} 
\label{Dbar_for_mu}
\overline{\partial}_z\left(\partial_z +ik \right)\mu(z,k,\tau)= \frac{\textstyle{1}}{\textstyle{4}}
q(z,\tau) \mu(z,k,\tau).
\end{equation}
A rigorous proof of \eqref{eq:t.motion} and \eqref{eq:s.motion} for smooth potentials of conductivity type is given in section \ref{sec:inverse} below.

If $q$ is smooth, rapidly decreasing, and either critical or subcritical, the Schr{\"o}\-dinger potential $q$ can be recovered using the $\overline{\partial}$-method of Beals and Coifman \cite{BC:1989} (see \cite{LMS:2007} for the critical case, and \cite{Music:2013} for the subcritical case). Both of these papers use techniques developed by Nachman \cite{Nachman:1996} in the context of the inverse conductivity problem. 

We can now derive a set of conservation laws for the NV equation by using the large-$k$ asymptotic expansion of $s(k)$. Since $s(k,\tau)$ is conserved we set $\tau=0$ and suppress $\tau$-dependence henceforward. We will give a formal derivation of the conserved quantities by using a large-$k$ asymptotic expansion of $\mu(z,k)$ and inserting this expansion into the formula \eqref{eq:s}. Since $s(k)$ is conserved, the coefficients of that large-$k$ expansion are also conserved quantities. For the moment, we assume that $\mu(z,k)$ admits a large-$k$ expansion of the form
\begin{equation}
\mu(z,k) \sim  1 + \sum_{j=1}^{\infty}\frac{a_j(z)}{k^j}.
\label{eq:muSeries}
\end{equation}   
We will derive such an expansion for smooth potentials of conductivity type in the next section (see Lemma \ref{lemma:mu.exp.k}). It is expected to hold in general. 

Substituting the series \eqref{eq:muSeries} into \eqref{Dbar_for_mu}, we may solve the resulting system for the coefficients $a_j$
\begin{equation}
-\sum_{j=1}^{\infty}\frac{\Delta a_j(z)}{k^j} - \sum_{j=1}^{\infty}\frac{4i\overline{\partial} a_j(z)}{k^{j-1}} + q\sum_{j=1}^{\infty}\frac{a_j(z)}{k^j} = -q.
\label{eq:SeriesConv}
\end{equation}
We find \[ a_1 = \frac{1}{4i}\overline{\partial}^{-1}q.\]  A recursion formula can then be derived, \begin{equation}
a_{j+1} = \frac{1}{4i}\overline{\partial}^{-1}(-4\overline{\partial}\partial a_j + qa_j) = i\partial a_j + \frac{1}{4i}\overline{\partial}^{-1}(qa_j).
\label{eq:RecConver}
\end{equation}
From this, it is clear that
$$ s(k) \sim \sum_{j=0}^\infty \frac{s_j}{k^{j}}, \quad s_j = \int_{\mathbb{R}^2} q(z) a_j(z) \, dz. $$
Thus, the first three conserved quantities are
\begin{align*}
s_0 & = \int_{\mathbb{R}^2}q(z)dz, \\
s_1 & = \int_{\mathbb{R}^2}\frac{1}{4i}q(z)(\overline{\partial}^{-1}q)(z)dz,\\
s_2 & = \int_{\mathbb{R}^2}\left(\frac{1}{4}q(z)v(z) - \frac{1}{16}q\overline{\partial}^{-1}(q\overline{\partial}^{-1}q)(z)\right) dz,
\end{align*} 
where with $z = x + iy$ and $\zeta= \zeta_1 + i\zeta_2$,
\[ (\overline{\partial}^{-1}q)(z) = \frac{1}{\pi}\int_{\mathbb{R}^2}\frac{q(\zeta)d\zeta}{z - \zeta}.\]  

\section{Inverse Scattering via Manakov Triples}
\label{sec:inverse}

In this section we develop the inverse scattering method to solve the Cauchy
problem for the Novikov-Veselov (NV) equation at zero energy
\begin{align}
q_{\tau}&=2\operatorname{Re}\left(  \partial^{3}q-\frac{3}{4}\partial(uq)\right)
\label{eq:NV.Cauchy}\\
\overline{\partial}u &  =\partial q\nonumber\\
\left.  q\right\vert _{\tau=0} &  =q_{0}\nonumber
\end{align}
for smooth Cauchy data $q_{0}$ of \emph{conductivity type} (see Definition
\ref{def:cond}). Note that our convention for the NV equation differs slightly from \eqref{eq:NV}; the form used here is more convenient for the zero-energy inverse scattering formalism. This section should be regarded as expository and\ the
material here is undoubtedly \textquotedblleft well known to the
experts\textquotedblright\ (see the Manakov \cite{Manakov:1976}, Nizhnik \cite{Nizhnik:1980}, and see e.g. Boiti, Leon, Manna, and Pempenelli
\cite{BLMP:1987} for the NV equation), although we give an essentially self-contained and
mathematically rigorous presentation.  

Earlier, Perry \cite{Perry:2013} exploited Bogdanov's \cite{Bogdanov:1987} observation that the NV equation is connected to the modified Novikov-Veselov (mNV) equation by a map analogous to the Miura map that connects the mKdV and KdV equations \cite{Miura:1968}. The mNV equation belongs to the Davey-Stewartson hierarchy, and the Miura map from mNV to NV has range precisely in the potentials of conductivity type. Building on the analysis of the Davey-Stewartson scattering maps in \cite{Perry:2012}, Perry solves the NV equation for initial data of conductivity type with additional smoothness and decay assumptions. 

The approach taken here is more robust because it works directly with the scattering transform for NV and avoids the Miura-type map. An extension of these ideas to subcritical potentials will appear in \cite{Music:2013} and  \cite{MP:2013}.

To analyze the scattering transform for NV, we draw on previous work of 
Lassas, Mueller, and Siltanen \cite{LMS:2007}, Lassas, Mueller, Siltanen, 
and Stahel \cite{LMSS:2011}, and
Perry \cite{Perry:2012}, particularly for mapping properties of the scattering
transform and its inverse on the space of smooth functions of conductivity
type as defined below. \ The main ingredient in our analysis (as
contrasted to \cite{LMS:2007,LMSS:2012,Perry:2013}) is the systematic use of the Manakov triple representation for the NV equation. 

To describe the Manakov triple representation, suppose that $q$ is a smooth function of $z$ and $\tau$. Suppose that there is a smooth function $u(z,\tau)$ with the property that $\overline{\partial}u=\partial q$ (existence of such a function for suitable classes of $q$ can be deduced from properties of the Beurling transform; see the Appendix). Define\footnote{This Manakov triple differs from that of the introduction by numerical factors since we use, for convenience, the version \eqref{eq:NV.Cauchy} of the NV equation.}
\begin{align}
L  &  =\partial\overline{\partial}-q/4,\label{eq:L}\\
A  &  =\partial^{3}+\overline{\partial}^{3}-\frac{3}{4}\left(  u\partial
+\overline{u}\overline{\partial}\right)  ,\label{eq:A}\\
B  &  =\frac{3}{4} \left(  \partial u+\overline{\partial}\overline{u}\right)  , \label{eq:B}%
\end{align}
where $B$ is a multiplication operator. The $(L,A,B)$ representation means the following:

\begin{proposition}
Let $q\in\mathcal{C}^{1}\left(  \left[  0,T\right]  ;\mathcal{C}^{\infty
}\left(  \mathbb{C}\right)  \right)  $ and suppose that there is a smooth function $u$ with $\overline{\partial}u=\partial q$. Then $q$ is a classical solution to the Cauchy problem \eqref{eq:NV.Cauchy}  if and only if the operator identity
\begin{equation}
\dot{L}=\left[  A,L\right]  -BL \label{eq:Manakov}%
\end{equation}
holds.
\end{proposition}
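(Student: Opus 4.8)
The plan is to verify the operator identity \eqref{eq:Manakov} by a direct computation, treating $L$, $A$, and $B$ as differential operators with smooth coefficients and comparing the two sides order by order in $\partial$ and $\overline{\partial}$. Since $\dot L$ differentiates only the coefficient of $L=\partial\overline{\partial}-q/4$, it is the order-zero operator given by multiplication by $-q_\tau/4$. The whole strategy is therefore to show that the right-hand side $[A,L]-BL$ is \emph{also} a pure multiplication operator---this is exactly what the hypothesis $\overline{\partial}u=\partial q$ and the definition of $B$ are engineered to guarantee---and then to read off that the surviving multiplication function equals $-q_\tau/4$ precisely when \eqref{eq:NV.Cauchy} holds.

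First I would record the elementary commutators $[\partial,f]=(\partial f)$ and $[\partial^3,f]=3(\partial f)\partial^2+3(\partial^2 f)\partial+(\partial^3 f)$ together with their $\overline{\partial}$-analogues, and split $[A,L]=[A,\partial\overline{\partial}]-\tfrac14[A,q]$. The top-order (fourth-order) part vanishes automatically: the constant-coefficient operators $\partial^3+\overline{\partial}^3$ and $\partial\overline{\partial}$ commute, so only the variable-coefficient piece $-\tfrac34(u\partial+\overline{u}\overline{\partial})$ of $A$ contributes to $[A,\partial\overline{\partial}]$. Expanding that commutator yields a second-order term $\tfrac34(\partial u+\overline{\partial}\overline{u})\partial\overline{\partial}$ along with terms $\tfrac34(\overline{\partial}u)\partial^2$, $\tfrac34(\partial\overline{u})\overline{\partial}^2$ and various first-order terms. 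The role of $B$ now becomes transparent: writing $BL=\tfrac34(\partial u+\overline{\partial}\overline{u})\partial\overline{\partial}-\tfrac{3}{16}(\partial u+\overline{\partial}\overline{u})q$, the subtraction $-BL$ removes exactly the surviving $\partial\overline{\partial}$ term.

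The remaining positive-order terms are then eliminated using the constraint, and here the reality of $q$ enters: conjugating $\overline{\partial}u=\partial q$ gives $\partial\overline{u}=\overline{\partial}q$. The second-order coefficients of $[A,L]-BL$ combine, after adding the contribution of $-\tfrac14[A,q]$, into $\tfrac34(\overline{\partial}u-\partial q)\partial^2+\tfrac34(\partial\overline{u}-\overline{\partial}q)\overline{\partial}^2$, which vanishes by the constraint; the first-order coefficients combine into $\tfrac34(\partial\overline{\partial}u-\partial^2 q)\partial+\tfrac34(\partial\overline{\partial}\overline{u}-\overline{\partial}^2 q)\overline{\partial}$, which vanish after applying $\partial$ (resp.\ $\overline{\partial}$) to the constraint. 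What is left is order zero, and collecting the multiplication terms via the Leibniz rule shows that $[A,L]-BL$ is multiplication by $-\tfrac14(\partial^3 q+\overline{\partial}^3 q)+\tfrac{3}{16}\bigl(\partial(uq)+\overline{\partial}(\overline{u}q)\bigr)$. Equating this with $-q_\tau/4$ and multiplying by $-4$ gives $q_\tau=\partial^3 q+\overline{\partial}^3 q-\tfrac34\bigl(\partial(uq)+\overline{\partial}(\overline{u}q)\bigr)$; since $q$ is real, $\overline{\partial}^3 q=\overline{\partial^3 q}$ and $\overline{\partial}(\overline{u}q)=\overline{\partial(uq)}$, so this is exactly $q_\tau=2\operatorname{Re}(\partial^3 q-\tfrac34\partial(uq))$, i.e.\ \eqref{eq:NV.Cauchy}.

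The computation is entirely mechanical once organized this way, so the main obstacle is purely one of bookkeeping: keeping track of the second- and first-order commutator terms and verifying that every cancellation is driven either by constant-coefficient operators commuting, by the definition of $B$, or by the constraint $\overline{\partial}u=\partial q$ and its conjugate. The logical structure of the ``if and only if'' is then immediate, because both $\dot L$ and $[A,L]-BL$ are multiplication operators, and an operator identity between two multiplication operators holds iff the functions agree pointwise; thus \eqref{eq:Manakov} reduces to the single scalar equation \eqref{eq:NV.Cauchy}.
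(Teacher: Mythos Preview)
Your proposal is correct and is precisely the ``straightforward calculation'' the paper alludes to; the paper gives no further details beyond that phrase, and your organization (cancel the $\partial\overline{\partial}$ term via $B$, kill the $\partial^2,\overline{\partial}^2,\partial,\overline{\partial}$ terms via the constraint $\overline{\partial}u=\partial q$ and its conjugate, then read off the zeroth-order identity) is the natural way to carry it out.
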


The proof is a straightforward calculation. The significance of the Manakov
triple is that it defines a scattering problem at zero energy for the operator
$L$, and a law of evolution of scattering data through the operator $A$. We
will fully describe the inverse scattering method for smooth initial data of
conductivity type, defined as follows:

\begin{definition}
\label{def:cond}A function $q_{0}\in\mathcal{C}^{\infty}(\mathbb{C})$ is a
smooth function of conductivity type if
\[
q_{0}=2\partial u_{0}+\left\vert u_{0}\right\vert ^{2}%
\]
for some $u_0\in\mathcal{S}\left(  \mathbb{C}\right)  $ with $\partial
u_0=\overline{\partial}\overline{u_0}$.
\end{definition}

The regularity requirements can be considerably relaxed but we make them here to ease the exposition. To compare this definition with Nachman's definition (see Definition \ref{def:cond.Nachman} in the introduction), one should think of $u_0$ as $ 2 \overline{\partial}\log \sigma$. 

We develop in turn the direct scattering transform,
the inverse scattering transform, and the solution formula for NV. We also comment on numerical methods for implementing the direct and inverse scattering transforms.

\subsection{The Direct Scattering Map}

To compute the\emph{ }scattering transform of $q_{0}$, one first constructs
the complex geometric optics (CGO) solutions to \eqref{eq:CGO-1}. Analytically, it is
more convenient to study the normalized complex geometric optics (NCGO)
solutions $\mu$ defined by (\ref{NCGO}). As shown by Nachman \cite{Nachman:1996}, there exists a unique solution of \eqref{eq:mu} for every nonzero $k$, so that $\mathbf{t}(k)$ is defined for every nonzero $k$. Nachman also shows that $\mathbf{t}(k)$ is $\mathcal{O}(|k|^\varepsilon)$ as $k \rightarrow 0$ for conductivity-type potentials.

\begin{definition}
The map $\mathcal{T}:q\mapsto\mathbf{t}$ defined by the problem (\ref{eq:mu})
and the representation formula (\ref{eq:t}) is called the \emph{direct
scattering map}.
\end{definition}

We will use without proof the following result of Lassas, Mueller, Siltanen,
Stahel \cite{LMSS:2011} (see also \cite{Perry:2013} for a different proof)

\begin{lemma}
Suppose that $q$ is a smooth function of conductivity type, and let
$\mathbf{t}$ be the scattering transform of $q$. Then $\mathbf{t}\left(
k\right)  /\overline{k}\in\mathcal{S}\left(  \mathbb{C}\right)  $.
\end{lemma}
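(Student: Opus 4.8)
The plan is to show that $\mathbf{t}(k)/\overline{k}$ is a Schwartz function by combining two facts: the asymptotic decay of $\mathbf{t}(k)$ as $k \to \infty$ coming from the recursion formula for the large-$k$ expansion of $\mu$, and the control on $\mathbf{t}(k)$ near $k=0$ coming from Nachman's estimate. Recall from the excerpt that for conductivity-type potentials the NCGO solution $\mu(z,k)$ exists and is unique for every nonzero $k$, and that Nachman showed $\mathbf{t}(k) = \mathcal{O}(|k|^\varepsilon)$ as $k \to 0$. The division by $\overline{k}$ is what repairs the mild growth/singularity structure: near the origin $\mathbf{t}(k)/\overline{k}$ should inherit enough decay to stay bounded (and smooth), while at infinity the Schwartz property of $q$ forces rapid decay of $\mathbf{t}(k)$ itself.

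First I would establish smoothness of $\mathbf{t}(k)$ as a function of $k \in \mathbb{C}$ (including near $k=0$), using the Fredholm theory for the integral equation \eqref{eq:mu.int}: since the operator $\varphi \mapsto g_k*(q\varphi)$ is compact on $L^{\tilde p}$ and depends smoothly on $k$, and the solution $\mu$ is unique for every nonzero $k$ (no exceptional points for conductivity-type $q$), one gets joint smoothness of $\mu(z,k)$ in $k$ away from the origin; the behavior at $k=0$ is the delicate point, handled by Nachman's analysis of the $\overline{\partial}$-equation for conductivity-type potentials, which shows $\mathbf{t}(k)/\overline{k}$ extends smoothly across $k=0$. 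Next I would extract the large-$k$ decay. From the asymptotic expansion $\mu(z,k) \sim 1 + \sum_{j\geq 1} a_j(z)/k^j$ (to be established in the next section via Lemma \ref{lemma:mu.exp.k}, which I may assume) together with the representation \eqref{eq:t}, one obtains an asymptotic series for $\mathbf{t}(k)$. The key observation is that each coefficient is an integral of $q$ against a derivative expression built from the recursion \eqref{eq:RecConver}; because $q \in \mathcal{S}(\mathbb{C})$ and the $a_j$ are smooth with controlled growth, the integrals $\int e_k(z) q(z) a_j(z)\,dz$ decay rapidly. Moreover the oscillatory factor $e_k(z)$ in \eqref{eq:t} produces, after integration by parts against the smooth rapidly decaying integrand, decay faster than any power of $k$, so that $\mathbf{t}(k)$ — and hence $\mathbf{t}(k)/\overline{k}$ together with all its $k$- and $\overline{k}$-derivatives — is $\mathcal{O}(|k|^{-N})$ for every $N$ at infinity.

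To upgrade pointwise decay to the full Schwartz condition I would control derivatives of $\mu$ in $k$. Differentiating the integral equation \eqref{eq:mu.int} in $k$ and $\overline{k}$ produces inhomogeneous equations of the same Fredholm type, with forcing terms involving $k$-derivatives of $g_k$; these can be solved using the same invertibility, giving uniform bounds on $\partial_k^\alpha \partial_{\overline k}^\beta \mu$ that decay appropriately at infinity. Feeding these into the differentiated form of \eqref{eq:t} and using the Schwartz decay of $q$ to justify differentiation under the integral sign then yields rapid decay of every derivative of $\mathbf{t}(k)/\overline{k}$. Combining the smoothness across $k=0$, the boundedness of all derivatives on compact sets, and the rapid decay of all derivatives at infinity gives $\mathbf{t}(k)/\overline{k} \in \mathcal{S}(\mathbb{C})$.

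I expect the main obstacle to be the behavior at $k=0$. Away from the origin everything is governed by routine Fredholm perturbation theory and integration by parts, but the factor $\overline{k}$ in the denominator signals that $\mathbf{t}(k)$ has a genuine zero of matching order at the origin, and verifying this — that $\mathbf{t}(k)/\overline{k}$ not only stays bounded but is genuinely smooth across $k=0$ with all derivatives controlled — requires the fine analysis of the $k \to 0$ limit of the CGO solutions for conductivity-type potentials. This is precisely the content of the Lassas--Mueller--Siltanen--Stahel result being invoked, and is the step where one must lean on the special structure $q_0 = 2\partial u_0 + |u_0|^2$ rather than generic decay and smoothness.
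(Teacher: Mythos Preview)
The paper does not actually prove this lemma: it is stated explicitly as a result used \emph{without proof}, citing Lassas--Mueller--Siltanen--Stahel \cite{LMSS:2011} and \cite{Perry:2013} for an alternative argument. So there is no in-paper proof to compare against.

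Your sketch is a reasonable outline and you correctly isolate the genuine difficulty as the behavior near $k=0$. The large-$|k|$ portion is essentially sound: each $q a_j$ is Schwartz (since $q\in\mathcal{S}$ dominates the mild growth of the $a_j$ coming from $\overline{\partial}^{-1}$), so integration by parts against $e_k$ gives rapid decay, and remainder control in the asymptotic expansion follows from the same Fredholm invertibility you invoke. Your treatment of $k\to 0$, however, is not a proof. Nachman's estimate $\mathbf{t}(k)=\mathcal{O}(|k|^\varepsilon)$ yields only $\mathbf{t}(k)/\overline{k}=\mathcal{O}(|k|^{\varepsilon-1})$, which is not even bounded; your opening claim that the division ``repairs'' the singularity is therefore unjustified by anything you have said, and in your final paragraph you (correctly) concede that smoothness across the origin is precisely the content of the LMSS result being cited. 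So in the end your argument, like the paper's, rests on that external reference for the decisive step.

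For comparison, the alternative proof in \cite{Perry:2013} takes a genuinely different route: it exploits the conductivity-type structure $q_0=2\partial u_0+|u_0|^2$ to factor the Schr\"odinger operator through a first-order Dirac-type system and thereby identifies $\mathbf{t}$ with Davey--Stewartson scattering data for $u_0$, where the Schwartz property is already known. That approach sidesteps the direct $k\to 0$ analysis entirely by transferring to a problem in which the origin is not special.
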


\subsection{Computation of Scattering Transforms}
\label{sec:scatcomp}

We describe two approaches for the computation of $\mathbf{t}=\mathcal{T}q$ for a given compactly supported and continuous $q$. The {\em LS method} is most accurate for $k$ away from zero, and the {\em DN method}  is more effective for $k$ near zero. Matlab codes for both approaches are available at the webpage of the book \cite{Mueller:2012}.

Without loss of generality we can assume that $\mbox{supp}(q)\subset\Omega$ where $\Omega\subset\R^2$ is the open unit disc. The LS method is based on the definition 
\begin{equation}\label{approach1formula}
\mathbf{t}(k)  =\int_{\R^2} e^{i\overline{k}\overline{z}}q(z)\psi(z,k)~dz,
\end{equation}
and the DN method uses integration by parts to transform (\ref{approach1formula}) into
\begin{equation}\label{approach2formula}
\mathbf{t}(k)  =\int_{\partial\Omega}e^{i\overline{k}\overline{z}}(\Lambda_q-\Lambda_0)\psi(\,\cdot\,,k)~dS(z),
\end{equation}
where $\Lambda_q$ is the {\em Dirichlet-to-Neumann map} defined below. A rigorous derivation of formulas (\ref{approach1formula}) and (\ref{approach2formula}) was given by Nachman in \cite{Nachman:1996}.

The LS method requires numerical evaluation of the  {\em complex geometrical optics solutions} $\psi(z,k)$. Numerically, it is better to solve the Lippmann-Schwinger equation for $\mu$
\begin{equation}\label{mLS}
  \mu(z,k) = 1-\int_{\Omega}g_k(z-w)q(w)\mu(w,k)~dw.
\end{equation}
A rigorous solvability analysis for equation (\ref{mLS}) can be found in \cite{MPS:2013,Nachman:1993,Nachman:1996}. 
Here $g_k$ is the fundamental solution satisfying $ (-\Delta-4ik\dbar)g_k(z) = \delta(z)$. The origin of $g_k$ is Faddeev's 1965 article \cite{Faddeev:1965}. Computationally, $g_1(z)$ can be evaluated using the Matlab expression
``{\tt  exp(-1i*z).*real(expint(-1i*z))/(2*pi);}'' The symmetry relation $g_k(z)=g_1(kz)$ extends this to all values  $k\not=0$. Note that  $g_k$ has a $\log k$ singularity  when $k\rightarrow 0$, causing numerical difficulties for $k$ near zero.

Equation (\ref{mLS}) is defined in the whole plane $z\in\R^2$, so some kind of truncation is needed for practical computation. The first numerical computation of complex geometrical optics solutions was reported in \cite{Siltanen:1999} in the context of (\ref{mLS}). That computation was used as a part of the first numerical implementation \cite{SMI:2000} of the $\overline{\partial}$ method for electrical impedance tomography. A more effective approach for computing $\mu$ is based on the periodization technique introduced by Gennadi Vainikko in \cite{Vainikko:2000}; see also \cite[Section 10.5]{Saranen:2002}. The adaptation of Vainikko's method to equation (\ref{mLS}) was first introduced in \cite{Mueller:2003}. For more details see \cite[Section 14.3]{Mueller:2012}.

Now let us turn to the DN method.  This method has practical use in the $\overline{\partial}$ reconstruction method for electrical impedance tomography.  (See \cite{Mueller:2012} and the relevant references therein.)  We first define the Dirichlet-to-Neumann map $\Lambda_{q}$ for the Dirichlet problem
\begin{eqnarray}
\left(  -\Delta+q\right)  u  &  =&0\mbox{ in }\Omega\label{eq:DP}\\
\left.  u\right\vert _{\partial\Omega}  &  =&f.\nonumber
\end{eqnarray}
If zero is not a Dirichlet eigenvalue of $-\Delta+q$ in $\Omega$,
the problem (\ref{eq:DP}) has a unique solution $u$
 for given $f\in
H^{1/2}(S^{1})$, and we set%
\begin{equation}\label{def:Lambda_q}
\Lambda_{q}f=\left.  \frac{\partial u}{\partial\nu}\right\vert _{\partial\Omega}%
\end{equation}
where $\partial/\partial\nu$ denotes differentiation with respect to the
outward normal on $\partial\Omega$. 

Formula (\ref{approach2formula}) requires the trace  $\psi(\,\cdot\,,k)\vert _{\partial\Omega}$. According to \cite{Nachman:1996}, the traces can be solved from the boundary integral equation 
\begin{equation}\label{eka_int_eq}
  \psi(z,k)|_{\partial\Omega} = e^{ikz} - \int_{{\partial\Omega}}G_k(z-w)\,(\Lambda_q-\Lambda_0)\psi(w,k)|_{\partial\Omega}~dS(w),
\end{equation}
if $k$ is not an exceptional point of $q$. Here $G_k(z):=e^{ikz}g_k(z)$ is Faddeev's Green function for the Laplace operator. For details of the numerical solution of (\ref{eka_int_eq}) see \cite[Section 15.3]{Mueller:2012}.

\subsection{The Inverse Scattering Map}\label{sec:InvScat}

It turns out the the NCGO solutions $\mu(z,k)$ also solve a $\overline
{\partial}$-problem in the $k$ variable determined by $\mathbf{t}(k)$. Letting%
\begin{equation}
\mathbf{t}^{\sharp}(k)=\frac{\mathbf{t}(k)}{4\pi\overline{k}},
\label{eq:tsharp}%
\end{equation}
we have, for any $p>2$:

\begin{lemma}
Suppose that $q$ is a smooth potential of conductivity type, and let
$\mu(z,k)$ be the corresponding NCGO\ solutions. Then:%
\begin{align}
\overline{\partial}_{k}\mu(z,k)  &  =e_{-k}\mathbf{t}^{\sharp}(k)\overline
{\mu(z,k)}\label{eq:mu.dbar.k.pre}\\
\mu(z,~\cdot~)-1  &  \in L^{p}\left(  \mathbb{C}\right)  .\nonumber
\end{align}
Moreover, $q(z)$ is recovered from $\mathbf{t}$ and $\mu(z,k)$ by the formula%
\begin{equation}
q(z)=\frac{4i}{\pi} \overline{\partial}_{z}\left(  \int e_{-k}(z)\mathbf{t}^{\sharp}%
(k)\overline{\mu(z,k)}~dk\right)  \label{eq:q.recon.pre}%
\end{equation}

\end{lemma}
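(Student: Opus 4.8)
The plan is to derive the $\overline{\partial}_k$-equation from the $\overline{\partial}_z$-equation \eqref{eq:mu} by exploiting the symmetry of that equation under $k \mapsto \overline{k}$ together with complex conjugation, and then to read off the reconstruction formula from the asymptotic expansion \eqref{eq:mu.asy}. First I would establish the symmetry. Writing $L_k = \overline{\partial}_z(\partial_z + ik)$, one checks by direct computation that if $\mu(z,k)$ solves $L_k \mu = \tfrac14 q\mu$ with $q$ real, then the function $e_{-k}(z)\overline{\mu(z,k)}$ also solves an equation of the same form; the exponential factor $e_{-k}$ precisely absorbs the shift that arises when one conjugates the operator $\partial_z + ik$. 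This produces a second family of solutions to the same integral equation \eqref{eq:mu.int} (up to the asymptotics), and because $q$ is of conductivity type, Nachman's uniqueness result guarantees the NCGO solution is unique for every $k \neq 0$; this uniqueness is what forces the two families to be related by the scalar factor $\mathbf{t}^{\sharp}(k)$ rather than being independent.

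Next I would pin down the coefficient. Differentiating the defining integral equation in $k$ is delicate because $g_k$ has the $\log k$ singularity noted after \eqref{mLS}, so instead I would argue at the level of asymptotics: the function $\overline{\partial}_k \mu(z,\cdot)$ solves the \emph{same} linear $\overline{\partial}_z$-equation as $\mu$ (the operator $L_k$ depends on $k$ only through the first-order term, and $\overline{\partial}_k$ applied to $ik\,\partial_z\mu$ contributes a term that, after using the equation for $\mu$ itself, closes up), and its large-$z$ behavior is dictated by the $\mathbf{t}(k)$ term in \eqref{eq:mu.asy}. Matching the coefficient of the $e_{-k}(z)/\overline{z}$ decay against the representation $e_{-k}\mathbf{t}^{\sharp}(k)\overline{\mu}$ and invoking uniqueness then identifies $\mathbf{t}^{\sharp}(k) = \mathbf{t}(k)/(4\pi\overline{k})$, consistent with \eqref{eq:tsharp}. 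The mapping property $\mu(z,\cdot) - 1 \in L^p(\mathbb{C})$ for $p > 2$ I would obtain from the earlier lemma stating $\mathbf{t}(k)/\overline{k} \in \mathcal{S}(\mathbb{C})$, which controls the source $e_{-k}\mathbf{t}^{\sharp}\overline{\mu}$ near both $k=0$ and $k=\infty$ and lets one solve the $\overline{\partial}_k$-problem by a Cauchy-transform (Beurling) argument as in the Appendix.

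For the reconstruction formula \eqref{eq:q.recon.pre}, I would integrate the $\overline{\partial}_k$-equation against the Cauchy kernel to recover $\mu$ from $\mathbf{t}^{\sharp}$, then extract $q$ from the large-$k$ asymptotics. Concretely, expanding $\mu(z,k) = 1 + a_1(z)/k + O(k^{-2})$ as in \eqref{eq:muSeries} with $a_1 = \tfrac{1}{4i}\overline{\partial}^{-1}q$, and reading off the $1/k$ coefficient of the Cauchy-transform solution of the $\overline{\partial}_k$-problem, yields an expression for $\overline{\partial}^{-1}q$; applying $\overline{\partial}_z$ (equivalently, reading the $O(1/\overline z)$ coefficient and differentiating) recovers $q$ with the stated constant $4i/\pi$. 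I expect the \textbf{main obstacle} to be the rigorous justification of differentiating the integral equation \eqref{eq:mu.int} in $k$, since the $\log k$ singularity of $g_k$ at $k=0$ and the need to control $\overline{\partial}_k$ of the Faddeev Green's function uniformly in $z$ make the naive computation formal; the clean route is to avoid direct differentiation and instead characterize $\overline{\partial}_k\mu$ as the unique $L^{\tilde p}$-solution of the conjugated $\overline{\partial}_z$-equation with the prescribed asymptotics, using the conductivity-type hypothesis (via Nachman) at every step to license the uniqueness arguments.
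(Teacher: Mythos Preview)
The paper does not actually prove this lemma; it is stated without proof as a known fact (the $\overline{\partial}$-method of Beals--Coifman, carried out for conductivity-type potentials by Nachman), with only a remark on uniqueness appended. So there is no ``paper's own proof'' to compare against.

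Your proposed argument is essentially the standard one found in that literature and is sound in outline: one verifies that $e_{-k}\overline{\mu}$ solves the same equation \eqref{eq:mu} as $\mu$ (using reality of $q$), that $\overline{\partial}_k\mu$ also solves the homogeneous version of the Fredholm equation up to an inhomogeneity involving $\mathbf{t}(k)$, and then uniqueness (available for every $k\neq 0$ precisely because $q$ is of conductivity type) forces the identity \eqref{eq:mu.dbar.k.pre}. The reconstruction formula then drops out of the leading large-$k$ coefficient, exactly as the paper later does in Lemma~\ref{lemma:mu.exp.k}.

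One correction: the obstacle you flag is not the real one. The $\log k$ singularity of $g_k$ is a $k\to 0$ phenomenon and does not obstruct differentiating the integral equation in $\overline{k}$ at a fixed $k\neq 0$. In fact the usual route is to compute $\overline{\partial}_k g_k$ directly: since the operator $-4\overline{\partial}_z(\partial_z+ik)$ is holomorphic in $k$, the function $\overline{\partial}_k g_k$ lies in its kernel, and one checks (e.g.\ from the large-$z$ expansion \eqref{eq:gk.exp}) that $\overline{\partial}_k g_k(z)$ is a constant multiple of $e_{-k}(z)/\overline{k}$. Differentiating \eqref{eq:mu.int} then produces $\mathbf{t}(k)$ immediately. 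Your alternative route via asymptotics and uniqueness also works, but the ``main obstacle'' you identify is not an obstacle at all.
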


\begin{remark}
This equation has at most one solution for each $z$, provided only that
$\mathbf{t}^{\sharp}\in L^{2}$ by a standard uniqueness theorem for the
$\overline{\partial}$-problem (see Brown-Uhlmann \cite{BU:1997}, Corollary 3.11).
\end{remark}

The fact that $\mu(z,k)$ obeys a $\overline{\partial}$-equation in the
$k$-variable also implies a large-$k$ asymptotic expansion for $\mu(z,k)$.

\begin{lemma}
\label{lemma:mu.exp.k}Suppose that $q$ is a smooth potential of conductivity
type, and let $\mu(z,k)$ be the corresponding NCGO solution. Then%
\begin{equation}
\mu(z,k)\sim1+\sum_{j\geq0}\frac{c_{j}(z)}{k^{j+1}}
\label{eq:mu.exp.k}%
\end{equation}
where%
\begin{align}
c_{0}  &  =-\frac{i}{4}\overline{\partial}^{-1}q\label{eq:c0}\\
c_{1}  &  =-\frac{1}{4}\partial\overline{\partial}^{-1}q+\frac{1}{16}%
\overline{\partial}^{-1}\left(  q\overline{\partial}^{-1}q\right)  .
\label{eq:c1}%
\end{align}
and the remaining $c_{j}$ are determined by the recurrence%
\begin{equation}
i\overline{\partial}c_{j+1}=\left(  \frac{q}{4}-\partial\overline{\partial
}\right)  c_{j} \label{eq:c.recur}%
\end{equation}

\end{lemma}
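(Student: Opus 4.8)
The plan is to substitute the postulated asymptotic series \eqref{eq:mu.exp.k} into the defining $\overline{\partial}$-equation \eqref{eq:mu} and match powers of $k$. First I would write the equation in the form $\overline{\partial}(\partial + ik)\mu = \tfrac14 q\mu$, expand $\mu \sim 1 + \sum_{j\geq 0} c_j(z) k^{-(j+1)}$, and collect coefficients of each power of $k$. The $ik\,\overline{\partial}$ term lowers the effective power by one, so the leading balance at order $k^0$ forces $i\overline{\partial}c_0 = -\tfrac14 q$, giving $c_0 = -\tfrac{i}{4}\overline{\partial}^{-1}q$ as in \eqref{eq:c0} (here $\overline{\partial}^{-1}$ is the solid Cauchy transform, well-defined since $q \in \mathcal{S}$). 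At order $k^{-(j+1)}$ the remaining terms of $\overline{\partial}\partial\mu$ and $\tfrac14 q\mu$ contribute, yielding the recurrence $i\overline{\partial}c_{j+1} = (\tfrac{q}{4} - \partial\overline{\partial})c_j$ of \eqref{eq:c.recur}; solving the $j=0$ case explicitly gives \eqref{eq:c1}. This is the routine algebraic half of the argument.

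The substantive point, however, is that matching coefficients in a \emph{formal} series does not by itself establish that $\mu$ admits such an \emph{asymptotic} expansion; this must be justified rigorously. My approach is to leverage the inverse-scattering structure already established in the preceding Lemma, namely that $\mu$ solves the $\overline{\partial}_k$-equation \eqref{eq:mu.dbar.k.pre}, $\overline{\partial}_k \mu = e_{-k}\mathbf{t}^\sharp(k)\overline{\mu}$, together with the fact that for conductivity-type potentials $\mathbf{t}(k)/\overline{k} \in \mathcal{S}(\mathbb{C})$ (the Lemma cited from \cite{LMSS:2011}). The idea is that the large-$k$ behavior of $\mu$ is governed by the behavior of $\mathbf{t}^\sharp$ near infinity, and the Schwartz decay of $\mathbf{t}/\overline{k}$ means $\mathbf{t}^\sharp$ decays faster than any polynomial. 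One then treats the solution of the $\overline{\partial}_k$-problem via its integral-equation form and iterates, extracting the asymptotic series by a Neumann-type expansion; the rapid decay of $\mathbf{t}^\sharp$ ensures each term in the expansion contributes at a successively higher order in $1/k$, with controllable remainders.

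The main obstacle will be this rigorous justification of the asymptotic expansion, as opposed to its formal derivation. Concretely, one must show that after subtracting the first $N$ terms of \eqref{eq:mu.exp.k}, the remainder is $\mathcal{O}(|k|^{-(N+2)})$ uniformly in $z$ (or in an appropriate weighted space), and this requires quantitative control of the Cauchy-transform operators $\overline{\partial}^{-1}$ acting on Schwartz data together with estimates on the $\overline{\partial}_k$ solution operator. I expect the cleanest route is to combine the two characterizations: use the $\overline{\partial}_k$-equation \eqref{eq:mu.dbar.k.pre} and the decay of $\mathbf{t}^\sharp$ to prove that $\mu(z,\cdot) - 1$ and all its $k$-derivatives decay suitably at infinity, then invoke a Borel-type or stationary-analysis argument to convert decay of $\overline{\partial}_k\mu$ into the polyhomogeneous expansion, and finally confirm that the coefficients so produced must satisfy \eqref{eq:c.recur} by the uniqueness of the formal matching in the $z$-equation. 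Verifying that the two derivations (from the $z$-equation and from the $k$-equation) yield the \emph{same} coefficients, and that the formal series is genuinely asymptotic rather than merely formal, is where the real work lies.
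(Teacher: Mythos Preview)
Your proposal is essentially correct and follows the same two-step architecture as the paper: establish existence of the asymptotic expansion via the $\overline{\partial}_k$-problem \eqref{eq:mu.dbar.k.pre}, then identify the coefficients $c_j$ by substituting into the $z$-equation \eqref{eq:mu}.

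However, you are overcomplicating the existence step. There is no need for a Neumann iteration, nor for any Borel-type or stationary-phase argument. The paper's route is far more elementary: since $\mu(z,\,\cdot\,)-1\in L^p$ solves $\overline{\partial}_k\mu=e_{-k}\mathbf{t}^\sharp\overline{\mu}$, the solid Cauchy transform in $k$ gives the \emph{exact} integral representation
\[
\mu(z,k)=1+\frac{1}{\pi}\int_{\mathbb{C}}\frac{1}{k-\kappa}\,e_{-\kappa}(z)\,\mathbf{t}^\sharp(\kappa)\,\overline{\mu(z,\kappa)}\,dm(\kappa),
\]
with $\overline{\mu}$ left unexpanded on the right. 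One then simply expands the Cauchy kernel as a finite geometric series,
\[
\frac{1}{k-\kappa}=\frac{1}{k}\sum_{j=0}^{N}\Bigl(\frac{\kappa}{k}\Bigr)^{j}+\Bigl(\frac{\kappa}{k}\Bigr)^{N+1}\frac{1}{k-\kappa},
\]
which immediately produces coefficients $c_j(z)=\pi^{-1}\int\kappa^{j}e_{-\kappa}(z)\mathbf{t}^\sharp(\kappa)\overline{\mu(z,\kappa)}\,dm(\kappa)$ and a remainder of order $|k|^{-(N+2)}$; the Schwartz decay of $\mathbf{t}^\sharp$ ensures that $\kappa^{N+1}\mathbf{t}^\sharp(\kappa)\overline{\mu(z,\kappa)}$ remains integrable. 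This is nothing more than the $k$-variable analogue of Lemma~\ref{lemma:P3}. So the ``real work'' you anticipate is in fact a one-line kernel expansion; the recurrence \eqref{eq:c.recur} then follows exactly as you describe.
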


\begin{proof}
The coefficients in the asymptotic expansion may be computed recursively from
the equation $\overline{\partial}\left(  \partial+ik\right)  \mu=(q/4)\mu$
once the existence of the asymptotic expansion is established. To do so, note
that
\[
\mu(z,k)=1+\frac{1}{\pi}\int_{\mathbb{C}}\frac{1}{k-\kappa}e_{-k}%
\mathbf{t}^{\sharp}(\kappa)\overline{\mu(z,\kappa)}~dm(\kappa).
\]
Writing
\[
\left(  k-\kappa\right)  ^{-1}=k^{-1}\sum_{j=0}^{N}\left(  \frac{\kappa}%
{k}\right)  ^{j}+\left(\frac{\kappa}{k}\right)^{N+1}\frac{1}{k-\kappa}%
\]
we obtain an expansion of the desired form with
\[
c_{j}(z)=\frac{1}{\pi}\int\kappa^{j}e_{-k}\mathbf{t}^{\sharp}(\kappa
)\overline{\mu(z,\kappa)}~dm(\kappa)
\]
and remainder%
\[
\frac{1}{\pi}k^{-(N+1)}\int\frac{1}{k-\kappa}\kappa^{N+1}e_{-k}\mathbf{t}^{\sharp
}(\kappa)\overline{\mu(z,\kappa)}~dm(\kappa)
\]
Finally to obtain explicit formulae for the $c_{j}$, we substitute the
expansion (\ref{eq:mu.exp.k}) into (\ref{eq:mu}) to obtain%
\[
i\overline{\partial}c_{0}=\frac{1}{4}q
\]
and the recurrence (\ref{eq:c.recur}) from which (\ref{eq:c1}) follows.
\end{proof}

Motivated by these results, suppose given $\mathbf{t}^{\sharp}\in
\mathcal{S}\left(  \mathbb{C}\right)  $ and let $\mu(z,k)$ be the unique
solution to (\ref{eq:mu.dbar.k.pre}). \emph{Define} $q$ by the reconstruction
formula (\ref{eq:q.recon.pre}). Then the solution $\mu$ of
(\ref{eq:mu.dbar.k.pre}) obeys the partial differential equation%
\begin{align*}
\overline{\partial}\left(  \partial+ik\right)  \mu   =\frac{q}{4}\mu,\qquad \lim_{\left\vert z\right\vert \rightarrow\infty}\mu(z,~\cdot~)-1   =0.
\end{align*}

\begin{definition}
The map $\mathcal{Q}:\mathbf{t}\rightarrow q$ defined by
(\ref{eq:mu.dbar.k.pre}) and (\ref{eq:q.recon.pre}) is called the
\emph{inverse scattering map}.
\end{definition}

We conclude this subsection by obtaining a full asymptotic expansion for
$\mu(z,k)$ which encodes relations between $s$ and $\mathbf{t}$.

\begin{lemma}
\label{lemma:mu.exp.z.bis}Suppose that $q$ is a smooth potential of
conductivity type, and let $\mathbf{t}^{\sharp}$ be given by (\ref{eq:tsharp}%
). Then, the expansion%
\begin{equation}
\mu(z,k)\underset{\left\vert z\right\vert \rightarrow\infty}{\sim}1+\sum
_{\ell\geq0}\left(  \frac{a_{\ell}}{z^{\ell+1}}+e_{-k}\frac{b_{\ell}%
}{\overline{z}^{\ell+1}}\right)  \label{eq:mu.exp2}%
\end{equation}
holds, where:%
\begin{align*}
a_{0}  &  =-i\overline{\partial}_{k}^{-1}\left(  \left\vert \mathbf{t}%
^{\sharp}\right\vert ^{2}\right)  ,\\
b_{0}  &  =i\mathbf{t}^{\sharp}%
\end{align*}
and the subsequent $a_{\ell},b_{\ell}~\ $are determined by the recurrence
relations%
\begin{align*}
\overline{\partial}_{k}a_{\ell}  &  =\mathbf{t}^{\sharp}\overline{b_{\ell}},\\
b_{\ell+1}  &  =i\overline{a_{\ell}}\mathbf{t}^\sharp-i\overline{\partial}_{k}b_{\ell}.
\end{align*}

\end{lemma}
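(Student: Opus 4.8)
The plan is to exploit the fact that $\mu(z,k)$ satisfies two compatible equations: the spatial equation \eqref{eq:mu}, $\overline{\partial}(\partial+ik)\mu = (q/4)\mu$, and the spectral $\overline{\partial}_k$-equation \eqref{eq:mu.dbar.k.pre}, $\overline{\partial}_k\mu = e_{-k}\mathbf{t}^{\sharp}\overline{\mu}$. I would use the first equation to establish the \emph{existence} of an asymptotic expansion of the form \eqref{eq:mu.exp2}, and the second to \emph{determine} its coefficients recursively.

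For existence, set $u = \mu-1$, so that \eqref{eq:mu} reads $-4\overline{\partial}(\partial+ik)u = -q\mu$. Thus $u$ satisfies the hypotheses of Lemma \ref{lemma:mu.exp.z} with $f=-q\mu$. Since $q$ is a Schwartz potential of conductivity type and $\mu$ is smooth with derivatives of all orders growing at most polynomially (elliptic regularity together with the decay $\mu-1\in L^{\tilde{p}}\cap L^\infty$), the product $q\mu = q + q(\mu-1)$ lies in $\mathcal{S}(\mathbb{C})$. Lemma \ref{lemma:mu.exp.z} then furnishes the expansion \eqref{eq:mu.exp2} with some coefficients $a_\ell(k),b_\ell(k)$.

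To identify the coefficients I would substitute \eqref{eq:mu.exp2} into the spectral equation \eqref{eq:mu.dbar.k.pre}. On the left one differentiates term by term in $k$, using $\overline{\partial}_k e_{-k} = -i\overline{z}\,e_{-k}$; on the right one conjugates the expansion, noting $\overline{e_{-k}} = e_k$ and $e_k e_{-k} = 1$, which gives
\begin{align*}
e_{-k}\mathbf{t}^{\sharp}\overline{\mu} \sim e_{-k}\mathbf{t}^{\sharp} + e_{-k}\mathbf{t}^{\sharp}\sum_{\ell\geq 0}\frac{\overline{a_\ell}}{\overline{z}^{\ell+1}} + \mathbf{t}^{\sharp}\sum_{\ell\geq 0}\frac{\overline{b_\ell}}{z^{\ell+1}}.
\end{align*}
Matching three families of terms then yields the claim. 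The leading $O(1)$ term, which carries the factor $e_{-k}$ and arises on the left when $\overline{\partial}_k$ hits $e_{-k}$ in the $\ell=0$ term, gives $-ib_0 = \mathbf{t}^{\sharp}$, i.e.\ $b_0 = i\mathbf{t}^{\sharp}$. The coefficient of $z^{-\ell-1}$ (no $e_{-k}$ factor) gives $\overline{\partial}_k a_\ell = \mathbf{t}^{\sharp}\overline{b_\ell}$; together with $\overline{b_0}=-i\overline{\mathbf{t}^{\sharp}}$ this produces $\overline{\partial}_k a_0 = -i|\mathbf{t}^{\sharp}|^2$, whence $a_0 = -i\overline{\partial}_k^{-1}(|\mathbf{t}^{\sharp}|^2)$. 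Finally the coefficient of $e_{-k}\overline{z}^{-\ell-1}$ gives $\overline{\partial}_k b_\ell - ib_{\ell+1} = \mathbf{t}^{\sharp}\overline{a_\ell}$, i.e.\ $b_{\ell+1} = i\overline{a_\ell}\mathbf{t}^{\sharp} - i\overline{\partial}_k b_\ell$, exactly the stated recurrences.

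The main obstacle is the rigor of this matching rather than the algebra. One must justify that the asymptotic expansion \eqref{eq:mu.exp2} may be differentiated term by term in $k$, so that $\overline{\partial}_k\mu$ genuinely has the formally differentiated expansion; I would establish this by differentiating the integral representation of $\mu$ coming from the $\overline{\partial}_k$-problem (in the spirit of the proof of Lemma \ref{lemma:mu.exp.k}). One must also justify separating the coefficients of the two families $\{z^{-\ell-1}\}$ and $\{e_{-k}\overline{z}^{-\ell-1}\}$; this separation is legitimate because the rapidly oscillating factor $e_{-k}$ distinguishes the two families in the large-$|z|$ limit, which is the same structural fact that guarantees uniqueness of the mixed expansion in Lemma \ref{lemma:mu.exp.z}.
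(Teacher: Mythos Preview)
Your approach is essentially identical to the paper's: existence of the expansion is obtained from Lemma~\ref{lemma:mu.exp.z}, and the coefficients are then read off by substituting the expansion into the $\overline{\partial}_k$-equation \eqref{eq:mu.dbar.k.pre}. The paper's proof is only two sentences and leaves the term-matching algebra (and the rigor caveats you raise) to the reader; you have simply supplied those details, correctly.
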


\begin{proof}
The existence of an expansion of the form (\ref{eq:mu.exp2}) was already
established in Lemma \ref{lemma:mu.exp.z}. To compute the coefficients, we
substitute the asymptotic series into (\ref{eq:mu.dbar.k.pre}).
\end{proof}

\begin{remark}
Comparing Lemmas \ref{lemma:mu.exp.z} and \ref{lemma:mu.exp.z.bis}, we see that%
\[
s(k)=4 \pi k\overline{\partial}_{k}^{-1}\left(  \left\vert \mathbf{t}%
^{\sharp}\right\vert ^{2}\right)
\]

\end{remark}

\subsection{Computation of Inverse Scattering Transforms}
\label{sec:invscatcomp}

The first step in the computation of the inverse scattering map is to solve the $\overline{\partial}$ equation
\begin{equation}\label{dbar_eq_numer}
  \frac{\partial}{\partial\overline{k}} \,\mu_\tau(z,k) = \frac{\T_\tau(k)}{4\pi\overline{k}}e_{-k}(z)\overline{\mu_\tau(z,k)} 
\end{equation}
with a fixed parameter $z\in\mathbb{R}^2$ and requiring large $|k|$ asymptotics $\mu_{\tau}(z,\cdot)-1\in L^\infty\cap L^{r}(\mathbb{C})$ for some $2<r<\infty$.  Since $q_{0}(z)$ is compactly supported and of conductivity type, by \cite{LMS:2007} the scattering transform $\T_\tau(k)$ is in the Schwartz class, and the solution $\mu_{\tau}$ to equation \eqref{dbar_eq_numer} can be computed by numerically solving the integral equation 
\begin{equation}\label{int_form}
\mu_{\tau}(z,k) = 1+\frac{1}{4\pi^2}\int_{\R^2}\frac{\T_{\tau}(k')}{\bar{k'}(k-k')}e_{-k'}(z)\overline{\mu_{\tau}(z,k')}dk'.
\end{equation}
To compute the solution of the $\overline{\partial}$ equation (\ref{dbar_eq_numer}),  the scattering transform $\mathbf{t}_\tau(k)$ is truncated to a large disc of radius $R$, generally chosen by inspection of the
scattering transform.  The truncated integral equation is solved numerically by the
method described in \cite{Knudsen:2004} for each point 
$z$ at which the evolved potential is to be computed.  The method in
\cite{Knudsen:2004} is based on Vainikko's periodic method
\cite{Vainikko:2000}; see also \cite[Section 10.5]{Saranen:2002}.
Note  that since the $\dbar$ equation \eqref{dbar_eq_numer} is
real-linear and not complex-linear due to the complex conjugate on the
right-hand side of \eqref{dbar_eq_numer}, one must write the real and
imaginary parts of the unknown function $\mu$ separately in the vector
of function values at the grid points.   It is proven in
\cite{KLMS:2009} that the error  decreases as $R$ tends to infinity.
The first computational solutions of equation \eqref{dbar_eq_numer}
can be found in \cite{SMI:2000}, and the first computations based on
\cite{Vainikko:2000} are found in \cite{Knudsen:2004}; for more
details see \cite[Section 15.4]{Mueller:2012}.

The inverse scattering transform is defined by
\begin{equation}\label{def:QQp}
    (\mathcal{Q}\mathbf{t}_\tau)(z) := \frac{i}{\pi^2}\overline{\partial}_z\int_{\mathbb{C}}\frac{\mathbf{t}_\tau(k)}{\overline{k}}\,e^{-ikz}\,\overline{\psi_\tau(z,k)} dk, 
\end{equation}
where $\psi_\tau(z,k):=e^{ikz}\mu_\tau(z,k)$.
The inverse transform (\ref{def:QQp}) first appeared in \cite[formula (4.10)]{BLMP:1987}.  See \cite{LMS:2007} and the references therein for an analysis of the solvability of (\ref{dbar_eq_numer}) and the domain of definition for (\ref{def:QQp}).  Under the assumption that real-valued, smooth initial data of conductivity type remain of conductivity type under evolution by the ISM, the conductivity $\gamma_{\tau}$ associated with the potential $q_{\tau}$ is given by
$$ \gamma_{\tau}^{1/2}(z) = \mu_{\tau}(z,0).$$
Then $q_{\tau}$ is computed by numerical
differentiation of $\gamma_\tau$ by the formula
$$q_{\tau}(z) = \gamma_{\tau}^{-1/2}\Delta\gamma_{\tau}^{1/2}.$$

The reader is referred to \cite{LMSS:2012} for numerical examples of the computation of the time evolution of conductivity-type potentials by the ISM.

\subsection{Time Evolution of NCGO\ Solutions}

In order to prove the solution formula (\ref{eq:NV.ISM}), we first study the
time evolution of NCGO\ solutions using the Manakov triple representation.
First we note the following important uniqueness theorem which is actually a special case of results of Nachman.

\begin{theorem}
\label{thm:unique1} Let $k\in\mathbb{C}$, $k\neq0$. Suppose that $q$ is a
smooth potential of conductivity type and that $\psi$ is a solution of
$L\psi=0$ with $\lim_{\left\vert z\right\vert \rightarrow\infty}\left(
e^{-ikz}\psi(z)\right)  =0$. Then $\psi(z)=0$.
\end{theorem}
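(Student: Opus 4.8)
The plan is to recognize the hypothesis as the statement that $k$ is an \emph{exceptional point} for $q$ and then to invoke Nachman's uniqueness theorem \cite{Nachman:1996}, which asserts that conductivity-type potentials have no exceptional points (so that \eqref{eq:mu} has a unique solution for every nonzero $k$, as recalled above). Concretely, set $\mu(z)=e^{-ikz}\psi(z)$. As in the derivation of \eqref{eq:mu}, the equation $L\psi=0$ is equivalent to $\overline{\partial}(\partial+ik)\mu=\tfrac14 q\mu$, and the hypothesis $e^{-ikz}\psi(z)\to 0$ becomes $\mu(z)\to 0$ as $|z|\to\infty$. It therefore suffices to show that $\mu$ satisfies the \emph{homogeneous} integral equation $\mu=-g_k*(q\mu)$ and lies in $L^{\tilde{p}}\cap L^\infty$; Nachman's result then forces $\mu\equiv 0$, whence $\psi=e^{ikz}\mu\equiv 0$.

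First I would upgrade the qualitative decay of $\mu$ to this integral identity. Since $q\in\mathcal{S}(\mathbb{C})$ is smooth and $\mu$ solves a second-order elliptic equation with principal part $\overline{\partial}\partial=\tfrac14\Delta$, interior elliptic estimates (with constants uniform in the base point, because the $C^m$ norms of $q$ are globally bounded) show that $\mu$ is smooth with derivatives bounded uniformly in $z$. Together with $\mu\in L^\infty$ this yields $q\mu\in\mathcal{S}(\mathbb{C})$, so the convolution $g_k*(q\mu)$ is defined and, by the asymptotic expansion of $g_k*f$ established in the proof of Lemma \ref{lemma:mu.exp.z} (applied to $f=-q\mu$), satisfies $g_k*(q\mu)(z)=\mathcal{O}(|z|^{-1})$; in particular it lies in $L^{\tilde{p}}\cap L^\infty$, since $\tilde{p}>2$ makes $|z|^{-1}$ integrable to the $\tilde{p}$ power at infinity. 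Both $\mu$ and $-g_k*(q\mu)$ solve $-4\overline{\partial}(\partial+ik)(\,\cdot\,)=-q\mu$, so their difference $h:=\mu+g_k*(q\mu)$ is a bounded tempered function with $h(z)\to 0$ at infinity and $\overline{\partial}(\partial+ik)h=0$.

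The crux is to show $h\equiv 0$, a Liouville-type fact carried out most cleanly on the Fourier side. Writing $\xi=\xi_1+i\xi_2$, one computes that the constant-coefficient operator $\overline{\partial}(\partial+ik)$ has symbol $-\tfrac14\,\xi(\overline{\xi}+2k)$, which vanishes only at the two points $\xi=0$ and $\xi=-2\overline{k}$. Hence $\widehat{h}$ is a tempered distribution supported in the finite set $\{0,-2\overline{k}\}$, so $h$ is a polynomial plus a polynomial multiple of the unimodular character associated with $\xi=-2\overline{k}$. Boundedness of $h$ eliminates all positive-degree terms, and $h(z)\to 0$ then eliminates the constants, forcing $h\equiv 0$. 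Consequently $\mu=-g_k*(q\mu)$ with $\mu\in L^{\tilde{p}}\cap L^\infty$, i.e. $\mu$ is precisely a homogeneous solution of the problem \eqref{eq:mu} at parameter $k$.

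A nonzero such $\mu$ is, by definition, a solution certifying that $k$ is an exceptional point of $q$. Since $q$ is of conductivity type, this contradicts Nachman's theorem \cite{Nachman:1996}, so $\mu\equiv 0$ and therefore $\psi\equiv 0$. I expect the genuinely deep ingredient to be Nachman's absence-of-exceptional-points result itself, which rests on the reality and positivity of $\sigma$ together with a first-order ($\overline{\partial}$-system) analysis, and which we use here as a black box; within the reduction above the one step requiring real care is the Fourier-support Liouville argument, which is what promotes the mere pointwise decay $\mu\to 0$ to membership in the correct class $L^{\tilde{p}}\cap L^\infty$ and to the homogeneous integral equation.
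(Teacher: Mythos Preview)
Your proposal is correct and aligns with the paper's approach: the paper gives no proof at all, merely stating that the theorem ``is actually a special case of results of Nachman'' \cite{Nachman:1996}. Your argument makes this attribution precise by reducing the hypothesis to the assertion that $\mu=e^{-ikz}\psi$ is a homogeneous solution of the Lippmann--Schwinger equation \eqref{eq:mu.int} in $L^{\tilde{p}}\cap L^\infty$, which is exactly what Nachman shows cannot happen for conductivity-type potentials; the Fourier-side Liouville step you supply (symbol $-\tfrac14\,\xi(\overline{\xi}+2k)$ vanishing only at $\xi=0$ and $\xi=-2\overline{k}$) is the natural way to pass from the mere pointwise decay $\mu\to 0$ to the integral identity, and is not spelled out in the paper.
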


Now suppose that $q(z,\tau)$ solves the NV equation and that $t\mapsto q(z,\tau)$ is
a $C^{1}$ map from $\left[  0,T\right]  $ into $\mathcal{S}\left(
\mathbb{C}\right)  $. Suppose that, for each $\tau$, $\varphi(\tau)$ solves
$L(\tau)\varphi(\tau)=0$. Differentiating the equation $L(\tau)\varphi(\tau)=0$ and using
the Manakov triple representation, we find%
\[
\left[  A(\tau),L(\tau)\right]  \varphi(\tau)+L(\tau)\dot{\varphi}(\tau)=0
\]
or%
\[
L(\tau)\left[  \dot{\varphi}(\tau)-A(\tau)\varphi(\tau)\right]  =0
\]
From this simple computation and Theorem \ref{thm:unique1}, we can derive an
equation of motion for the NCGO\ solutions, and recover (\ref{eq:t.motion}%
) and (\ref{eq:s.motion}) from a careful calculation of asymptotics. Later, we will
show by explicit construction that, if $q_{0}$ is a smooth function of
conductivity type, then there is a solution $q(z,\tau)$ of the NV equation so
that $q(z,\tau)$ is smooth and of conductivity type for each $\tau$.

\begin{lemma}
Suppose that $q(z,\tau)$ is a solution of the NV\ equation where, for each $\tau$,
$q(z,\tau)$ is a smooth function of conductivity type. Let $u=\overline{\partial
}^{-1}\partial q$. Then
\begin{equation}
\dot{\mu}=ik^{3}\mu+\left(  \partial+ik\right)  ^{3}\mu+\overline{\partial
}^{3}\mu-\frac{3}{4}u\left(  \partial+ik\right)  \mu-\frac{3}{4}\overline
{u}\overline{\partial}\mu\label{eq:mudot}%
\end{equation}

\end{lemma}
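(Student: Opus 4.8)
The plan is to derive the evolution equation \eqref{eq:mudot} for $\mu$ directly from the Manakov triple structure together with the uniqueness Theorem \ref{thm:unique1}. Recall that $\mu(z,k,\tau) = e^{-ikz}\psi(z,k,\tau)$, where $\psi$ is the CGO solution of $L(\tau)\psi(\tau) = 0$ normalized so that $e^{-ikz}\psi - 1 \to 0$. The computation just above the lemma shows that, along a solution of NV, the quantity $\dot{\psi} - A\psi$ lies in the kernel of $L(\tau)$. The first step is therefore to show that $\dot{\psi} - A\psi$ has the right asymptotic normalization so that Theorem \ref{thm:unique1} forces a \emph{specific} correction rather than simply vanishing. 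Concretely, I would compute the large-$z$ behavior of $A(\tau)\psi$, where $A = \partial^3 + \overline{\partial}^3 - \tfrac{3}{4}(u\partial + \overline{u}\,\overline{\partial})$: since $\psi \sim e^{ikz}$ and $\partial e^{ikz} = ik\, e^{ikz}$ while $\overline{\partial}\, e^{ikz} = 0$, the leading term of $A\psi$ is $(ik)^3 e^{ikz} = -ik^3 e^{ikz}$, with the $u$-terms contributing lower-order decaying pieces because $u \in \mathcal{S}$.

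Next I would form the combination $\dot{\psi} - A\psi - (\text{constant})\,\psi$ designed to kill the leading exponential growth, choose the constant to be $-ik^3$ (equivalently $+ik^3$ after moving $A\psi$ to the other side), and verify that the resulting function decays like $o(e^{ikz})$ at infinity. Since it also lies in $\ker L(\tau)$ by the Manakov identity, Theorem \ref{thm:unique1} then forces it to vanish identically, yielding
\begin{equation*}
\dot{\psi} = A\psi + ik^3 \psi.
\end{equation*}
The final step is purely computational: I substitute $\psi = e^{ikz}\mu$ into this identity and conjugate the operators through the exponential. Using $e^{-ikz}\partial(e^{ikz}\,\cdot\,) = (\partial + ik)$ and $e^{-ikz}\overline{\partial}(e^{ikz}\,\cdot\,) = \overline{\partial}$ (because $\overline{\partial}\,e^{ikz}=0$), the operator $\partial^3$ becomes $(\partial+ik)^3$, the operator $\overline{\partial}^3$ stays $\overline{\partial}^3$, and the first-order terms $-\tfrac{3}{4}(u\partial + \overline{u}\,\overline{\partial})$ become $-\tfrac{3}{4}u(\partial+ik) - \tfrac{3}{4}\overline{u}\,\overline{\partial}$. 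Collecting these gives exactly \eqref{eq:mudot}.

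The main obstacle I anticipate is the first step: verifying that $\dot{\psi} - A\psi - ik^3\psi$ genuinely satisfies the decay hypothesis $\lim_{|z|\to\infty}(e^{-ikz}(\,\cdot\,)) = 0$ needed to invoke Theorem \ref{thm:unique1}. This requires controlling the $\tau$-derivative $\dot{\psi}$ and the subleading terms of $A\psi$ uniformly, which in turn relies on the hypothesis that $\tau \mapsto q(\cdot,\tau)$ is a $C^1$ map into $\mathcal{S}(\mathbb{C})$ and that $\mu(\cdot,\tau)-1$ lies in the appropriate $L^{\tilde p}\cap L^\infty$ space for each $\tau$, so that differentiation in $\tau$ preserves the normalization $e^{-ikz}\psi - 1 \to 0$ and its $\tau$-derivative tends to zero. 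Once the asymptotics are pinned down, the uniqueness theorem does all the real work and the remaining algebra is routine operator conjugation through $e^{ikz}$.
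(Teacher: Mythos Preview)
Your proposal is correct and follows essentially the same argument as the paper. The only cosmetic difference is that the paper absorbs the $ik^{3}$ correction into a phase by working with $\varphi=e^{i(kz-k^{3}\tau)}\mu$ and proving $\dot{\varphi}=A\varphi$ directly, whereas you add the multiple $ik^{3}\psi$ by hand (legitimately, since $L\psi=0$); unwinding the phase, these are the same identity $\dot{\psi}=ik^{3}\psi+A\psi$, and your conjugation back to $\mu$ matches the paper's.
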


\begin{proof}
Before giving the proof we make several remarks. Since $\psi=e^{ikz}\mu$, the
evolution equation (\ref{eq:mudot}) is equivalent to%
\begin{equation}
\dot{\psi}=ik^{3}\psi+\partial^{3}\psi+\overline{\partial}^{3}\psi-\frac{3}%
{4}\left(  u\partial+\overline{u}\overline{\partial}\right)  \psi.
\label{eq:psidot}%
\end{equation}
Next, let $\varphi(z,k,\tau)=e^{iS}\mu(z,k,\tau)$ with $S(z,k,\tau)=kz-k^{3}\tau$. From the argument above we have%
\[
L(\tau)\left[  \dot{\varphi}(\tau)-A(\tau)\varphi(\tau)\right]  =0.
\]
To conclude that $\dot{\varphi}(\tau)=A(\tau)\varphi(\tau)$, we must show that
\[
\lim_{\left\vert z\right\vert \rightarrow\infty}\left(  e^{-ikz}\left[
\dot{\varphi}(\tau)-A(\tau)\varphi(\tau)\right]  \right)  =0.
\]
Write
\[
f\sim_{k}g
\]
if%
\[
\lim_{\left\vert z\right\vert \rightarrow\infty}\left[  e^{-ikz}\left(
f-g\right)  \right]  =0
\]
Noting that $\mu-1$ and its derivatives in $z$ and $\overline{z}$ vanish as
$\left\vert z\right\vert \rightarrow\infty$, a simple calculation shows that%
\begin{align*}
\dot{\varphi}-A\varphi &  \sim_{k}e^{iS}\left(  -ik^{3}\mu-\left(
\partial+ik\right)  ^{3}\mu\right) \\
&  \sim_{k}0
\end{align*}
Hence $\dot{\varphi}=A\varphi$ from which (\ref{eq:psidot}) follows.
\end{proof}

Hence: 

\begin{lemma}
Suppose that $q(z,\tau)$ is a solution of the NV\ equation where, for each $\tau$,
$q(z,\tau)$ is a smooth function of conductivity type. Let $\mu(z,k,\tau)$ be the
corresponding NCGO solution with%
\begin{equation}
\mu(z,k,\tau)\sim 1+\frac{1}{4\pi ikz}s(k,\tau)-\frac{e_{-k}(z)}{4\pi i\overline
{k}\overline{z}}\mathbf{t}\left(  k,\tau \right)  +\mathcal{O}\left(  \left\vert
z\right\vert ^{-2}\right)  . \label{eq:mu.asy.bis}%
\end{equation}
Then%
\begin{align}
\dot{s}(k,\tau)  &  =0,\label{eq:sdot.bis}\\
\mathbf{\dot{t}}\left(  k, \tau \right)   &  =i\left(  k^{3}+\overline{k}%
^{3}\right)  \mathbf{t}\left(  k, \tau  \right). \label{eq:tdot.bis}%
\end{align}

\end{lemma}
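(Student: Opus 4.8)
The plan is to substitute the large-$z$ asymptotics of $\mu$ into the evolution equation \eqref{eq:mudot} and match the coefficients of $z^{-1}$ and $e_{-k}(z)\overline{z}^{-1}$. Differentiating \eqref{eq:mu.asy.bis} in $\tau$, the left-hand side $\dot\mu$ has the expansion
\[
\dot\mu \underset{|z|\to\infty}{\sim} \frac{\dot s(k,\tau)}{4\pi i k}\,\frac{1}{z} - \frac{\dot{\mathbf t}(k,\tau)}{4\pi i\overline{k}}\,\frac{e_{-k}(z)}{\overline{z}} + \mathcal{O}\!\left(|z|^{-2}\right),
\]
so it suffices to read off the coefficients of $z^{-1}$ and $e_{-k}(z)\overline{z}^{-1}$ from the right-hand side of \eqref{eq:mudot}.

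First I would simplify the right-hand side. Expanding the cube, $(\partial+ik)^3 = \partial^3 + 3ik\partial^2 - 3k^2\partial - ik^3$, so the term $ik^3\mu$ cancels the constant piece $-ik^3\mu$ and \eqref{eq:mudot} reduces to
\[
\dot\mu = \partial^3\mu + 3ik\partial^2\mu - 3k^2\partial\mu + \overline{\partial}^3\mu - \tfrac{3}{4}u(\partial+ik)\mu - \tfrac{3}{4}\overline{u}\,\overline{\partial}\mu .
\]
Since $\int_{\mathbb{C}}\partial q\,dm = 0$, the function $u=\overline{\partial}^{-1}\partial q$ decays like $|z|^{-2}$; as $\mu \to 1$ and its derivatives vanish, both $u(\partial+ik)\mu$ and $\overline{u}\,\overline{\partial}\mu$ are $\mathcal{O}(|z|^{-2})$ and contribute nothing at order $|z|^{-1}$. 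Thus I only need the $|z|^{-1}$ asymptotics of $\partial^3\mu + 3ik\partial^2\mu - 3k^2\partial\mu + \overline{\partial}^3\mu$.

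Next I would compute these using $\partial e_{-k} = -ik\,e_{-k}$, $\overline{\partial}e_{-k} = -i\overline{k}\,e_{-k}$, $\partial(1/z) = -1/z^2$, and $\overline{\partial}(1/\overline{z}) = -1/\overline{z}^2$. Every $\partial$-derivative of the holomorphic term $z^{-1}$ lowers the order (it is $\mathcal{O}(z^{-2})$), and $\overline{\partial}^3$ annihilates it; hence the right-hand side carries \emph{no} $z^{-1}$ term, which forces $\dot s(k,\tau)=0$. For the $e_{-k}(z)\overline{z}^{-1}$ coefficient, each $\partial$ acting on $e_{-k}(z)\overline{z}^{-1}$ contributes a factor $-ik$ at leading order and each $\overline{\partial}$ a factor $-i\overline{k}$; writing $b_0$ for the coefficient of $e_{-k}(z)\overline{z}^{-1}$ in $\mu$, the four terms give
\[
\left( ik^3 - 3ik^3 + 3ik^3 + i\overline{k}^3\right) b_0 = i\!\left(k^3+\overline{k}^3\right) b_0 .
\]
Matching with the left-hand side yields $\dot b_0 = i(k^3+\overline{k}^3)b_0$, i.e. $\dot{\mathbf t}(k,\tau) = i(k^3+\overline{k}^3)\mathbf t(k,\tau)$.

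The main obstacle is justifying that the asymptotic expansion \eqref{eq:mu.asy.bis} may be differentiated term-by-term, both in $\tau$ (so that $\dot\mu$ really has the leading expansion written above) and in $z$ and $\overline{z}$ (so that $\partial^j\mu$ and $\overline{\partial}^j\mu$ have the expansions obtained by formal differentiation). This rests on the controlled-remainder structure behind Lemma \ref{lemma:mu.exp.z}: the expansion arises from $u=g_k\ast f$ together with the explicit expansion of the Faddeev Green's function, whose remainders and their derivatives can be bounded uniformly under the smoothness and decay afforded by the conductivity-type hypothesis. Once these uniform remainder estimates are in place, the coefficient-matching above is rigorous.
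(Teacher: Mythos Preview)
Your proof is correct and follows essentially the same route as the paper: substitute the large-$z$ expansion into \eqref{eq:mudot}, discard the $u$-terms because $u=\mathcal{O}(|z|^{-2})$, and match the $z^{-1}$ and $e_{-k}(z)\overline{z}^{-1}$ coefficients after computing the action of $\partial^3$, $3ik\partial^2$, $-3k^2\partial$, and $\overline{\partial}^3$ on the leading terms. The paper records the same cancellation $3ik\partial^2\mu-3k^2\partial\mu\sim 0$ that you obtain as $-3ik^3+3ik^3=0$, and arrives at the identical coefficient $i(k^3+\overline{k}^3)$; your added remark on justifying term-by-term differentiation via the remainder estimates behind Lemma~\ref{lemma:mu.exp.z} is a point the paper leaves implicit.
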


\begin{proof}
Substituting the asymptotic relation (\ref{eq:mu.asy.bis}) into
(\ref{eq:mudot}), we may compute, modulo terms of order $z^{-2}$,%
\[
\frac{1}{\pi ikz}\dot{s}-\frac{e_{-k}(z)}{\pi i\overline{k}%
\overline{z}}\mathbf{\dot{t}}=-\frac{e_{-k}\left(  z\right)  }{\pi i\overline
{k}\overline{z}}\left(  ik^{3}+i\overline{k}^{3}\right)  \mathbf{t}.%
\]
The computation uses the following facts. If
\[
\mu(z,k,\tau)=1+\frac{a_0}{z}+e_{-k}\frac{b_0}{\overline{z}}+\mathcal{O}\left(
\left\vert z\right\vert ^{-2}\right)
\]
then (\textquotedblleft$\sim$\textquotedblright\ means \textquotedblleft is
asymptotic as $\left\vert z\right\vert \rightarrow\infty$ to\textquotedblright%
modulo $\mathcal{O}\left(|z|^{-2}\right)$)
\begin{align*}
\partial^{3}\mu &  \sim e_{-k}\left(  -ik\right)  ^{3}\frac{b_0}{\overline{z}%
},\\
\overline{\partial}^{3}\mu &  \sim e_{-k}\left(  -i\overline{k}\right)
^{3}\frac{b_0}{\overline{z}},\\
3ik\partial^{2}\mu-3k^{2}\partial\mu &  \sim0
\end{align*}
together with the fact that $u$ defined by $\overline{\partial}u=\partial q$ satisfies $u=\mathcal{O}\left(  \left\vert z\right\vert
^{-2}\right)  $. The identities (\ref{eq:sdot.bis}) and (\ref{eq:tdot.bis})
are immediate.
\end{proof}

\subsection{Solution by Inverse Scattering}

Motivated by the computations of the preceding subsection, we now consider the
problem%
\begin{align}
\overline{\partial}_{k}\mu &  =e^{i\tau S}\mathbf{t}^{\sharp}\mu
\label{eq:mu.dbar.k}\\
\mu(z,~\cdot~,\tau)-1  &  \in L^{p}\left(  \mathbb{C}\right) \nonumber
\end{align}
for a function $\mu(z,k,\tau)$ and the putative reconstruction%
\begin{equation}
q(z,\tau)=\frac{4i}{\pi}\overline{\partial}_{z}\left(  \int_{\mathbb{C}}%
e^{i\tau S}\mathbf{t}^{\sharp}(k)\overline{\mu(z,k,\tau)}~dk\right)  .
\label{eq:q.recon}%
\end{equation}
Here%
\begin{equation}
S(z,k,\tau)=-\frac{1}{\tau}\left(  kz+\overline{k}\overline{z}\right)  +\left(
k^{3}+\overline{k}^{3}\right)   \label{eq:S}%
\end{equation}
and $\mathbf{t}^{\sharp}$ is obtained from the Cauchy data $q_{0}$. We will
show that $q(z,\tau)$ solves the NV equation by deriving an equation of motion
for $\mu(z,k,\tau)$ and using this equation to compute $q_{\tau}$ if $q$ is given by
(\ref{eq:q.recon}) and $\mu$ is the unique solution of (\ref{eq:mu.dbar.k})

First, we establish an equation of motion for the solution $\mu$ of
(\ref{eq:mu.dbar.k}). Although this equation is the same equation as
(\ref{eq:mudot}) for the solution of the direct problem, our starting point
here is (\ref{eq:mu.dbar.k}).

\begin{lemma}
\label{lemma:mu.dot}Suppose that $\mathbf{t}^{\sharp}\in\mathcal{S}\left(
\mathbb{C}\right)  $ and $\mu$ solves (\ref{eq:mu.dbar.k}). For each $\tau$, define $q(z,\tau)$ by \eqref{eq:q.recon} and define $u(z,\tau)$ by $u=\overline{\partial}^{-1} \partial q$. Then%
\[
\dot{\mu}=ik^{3}\mu+\left(  \partial+ik\right)  ^{3}\mu+\overline{\partial
}^{3}\mu-\frac{3}{4}u\left(  \partial+ik\right)  \mu-\frac{3}{4}\overline
{u}\overline{\partial}\mu
\]
where $\partial$ and $\overline{\partial}$ denote differentiation with respect
to the $z$ and $\overline{z}$ variables.
\end{lemma}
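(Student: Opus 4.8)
The plan is to show that $\dot\mu$ and the claimed right-hand side, which I abbreviate
\[
R:=ik^{3}\mu+(\partial+ik)^{3}\mu+\overline{\partial}^{3}\mu-\tfrac{3}{4}u(\partial+ik)\mu-\tfrac{3}{4}\overline{u}\,\overline{\partial}\mu=:\mathcal{L}\mu,
\]
solve \emph{the same} real-linear $\overline{\partial}_k$-equation with the same decay in $k$, and then to invoke the uniqueness theorem for the $\overline{\partial}_k$-problem (\cite{BU:1997}) to conclude $\dot\mu=R$. Writing $E:=e^{i\tau S}=e_{-k}\,e^{i\tau(k^{3}+\overline{k}^{3})}$ (so that $\overline{\partial}_k\mu=E\,\mathbf{t}^{\sharp}\,\overline{\mu}$, cf.\ \eqref{eq:mu.dbar.k}, \eqref{eq:S}), the whole computation is powered by three elementary facts: (i) $E$ depends on $z$ only through $e_{-k}$, so $(\partial+ik)E=0$ and $\overline{\partial}E=-i\overline{k}\,E$; (ii) $\partial_\tau E=i(k^{3}+\overline{k}^{3})E$; and (iii) $\overline{\partial}_k$ commutes with every operator in $\mathcal{L}$, since its coefficients are either holomorphic in $k$ (the powers of $k$, as $\overline{\partial}_k k=0$) or independent of $k$ (the multipliers $u,\overline{u}$ and the $z$-derivatives).

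First I would differentiate $\overline{\partial}_k\mu=E\,\mathbf{t}^{\sharp}\,\overline{\mu}$ in $\tau$; by (ii) this gives
\[
\overline{\partial}_k\dot\mu=i(k^{3}+\overline{k}^{3})\,\overline{\partial}_k\mu+E\,\mathbf{t}^{\sharp}\,\overline{\dot\mu}.
\]
Then I would compute $\overline{\partial}_kR$. By (iii), $\overline{\partial}_kR=\mathcal{L}\,\overline{\partial}_k\mu=\mathcal{L}\bigl(E\,\mathbf{t}^{\sharp}\,\overline{\mu}\bigr)$, and (i) lets me push $\mathcal{L}$ through $E$: the operator $(\partial+ik)$ annihilates $E$ while $\overline{\partial}$ produces a factor $-i\overline{k}$, so the $z$-derivatives acting on $E\,\mathbf{t}^{\sharp}\,\overline{\mu}$ become $\partial$ and $(\overline{\partial}-i\overline{k})$ acting on $\overline{\mu}$ (recall $\mathbf{t}^{\sharp}=\mathbf{t}^{\sharp}(k)$ is independent of $z$). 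Using $\overline{(\partial+ik)^{3}\mu}=(\overline{\partial}-i\overline{k})^{3}\overline{\mu}$ and that the cubic pieces recombine as $i(k^{3}+\overline{k}^{3})\overline{\mu}-i\overline{k}^{3}\overline{\mu}=ik^{3}\overline{\mu}$, a short bookkeeping yields exactly
\[
\overline{\partial}_kR=i(k^{3}+\overline{k}^{3})\,\overline{\partial}_k\mu+E\,\mathbf{t}^{\sharp}\,\overline{R}.
\]
Subtracting the two displays, $w:=\dot\mu-R$ solves the \emph{homogeneous} real-linear equation $\overline{\partial}_kw=E\,\mathbf{t}^{\sharp}\,\overline{w}$.

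It remains to check that $w\in L^{p}$ in the $k$-variable so that uniqueness applies, and this is the step I expect to be the main obstacle. The term $\dot\mu$ lies in $L^{p}$ because $\mathbf{t}^{\sharp}(\cdot,\tau)=e^{i\tau(k^{3}+\overline{k}^{3})}\mathbf{t}^{\sharp}(\cdot,0)\in\mathcal{S}$ depends smoothly on $\tau$ and $\mu(z,\cdot,\tau)-1$ is a $C^{1}$ curve in $L^{p}$. The delicate point is the decay of $R=\mathcal{L}\mu$: written out it contains terms of apparent order $k^{3},k^{2},k^{1},k^{0}$, and one must show these all cancel. Inserting the expansion of Lemma \ref{lemma:mu.exp.k}, the $ik^{3}\mu$ term cancels the top term of $(\partial+ik)^{3}\mu$ identically, reducing $R$ to $-3k^{2}\partial\mu+3ik\partial^{2}\mu+\partial^{3}\mu+\overline{\partial}^{3}\mu-\tfrac{3}{4}u(\partial+ik)\mu-\tfrac{3}{4}\overline{u}\,\overline{\partial}\mu$. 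The coefficient of $k^{1}$ is $-3\partial c_{0}-\tfrac{3}{4}iu$, which vanishes by \eqref{eq:c0} together with $\partial c_{0}=-\tfrac{i}{4}u$; the coefficient of $k^{0}$, namely $-3\partial c_{1}+3i\partial^{2}c_{0}-\tfrac{3}{4}iu\,c_{0}$, vanishes only after one integration by parts in $z$ (equivalently, via the identity $\partial\overline{\partial}^{-1}(q\,\overline{\partial}^{-1}q)=(\partial\overline{\partial}^{-1}q)(\overline{\partial}^{-1}q)$, which holds modulo a holomorphic term killed by decay), using \eqref{eq:c1} and the recurrence \eqref{eq:c.recur}. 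This leaves $R=O(k^{-1})\in L^{p}$ for $p>2$, whence $w\in L^{p}$.

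With $w\in L^{p}$ solving $\overline{\partial}_kw=E\,\mathbf{t}^{\sharp}\,\overline{w}$, the uniqueness theorem for the $\overline{\partial}_k$-problem forces $w\equiv0$, i.e.\ $\dot\mu=R$, which is the asserted equation of motion. The only genuinely nontrivial ingredient is the $k$-asymptotic cancellation establishing $R\in L^{p}$; the commutator identity, by contrast, is a clean consequence of the special behavior of $E=e^{i\tau S}$ under $(\partial+ik)$ and $\overline{\partial}$, and of the commutation of $\overline{\partial}_k$ with $\mathcal{L}$.
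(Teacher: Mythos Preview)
Your proof is correct and follows essentially the same route as the paper's. The paper also defines $w=\dot\mu-R$, shows $\overline{\partial}_k w=e^{i\tau S}\mathbf{t}^{\sharp}\overline{w}$, verifies $w=O(k^{-1})$ via the large-$k$ expansion of $\mu$, and concludes by the generalized Liouville theorem. The only organizational differences are that the paper applies the intertwining identities $\overline{\partial}_k(\partial+ik)\mu=e^{i\tau S}\mathbf{t}^{\sharp}\,\overline{\overline{\partial}\mu}$ and $\overline{\partial}_k(\overline{\partial}\mu)=e^{i\tau S}\mathbf{t}^{\sharp}\,\overline{(\partial+ik)\mu}$ directly to $w$ (rather than computing $\overline{\partial}_k\dot\mu$ and $\overline{\partial}_kR$ separately and subtracting), and that for the vanishing of the $k^{0}$ coefficient the paper states explicitly that $A_0$ is holomorphic in $z$ and decays, hence vanishes by Liouville---which is exactly your parenthetical ``holomorphic term killed by decay,'' not really an integration by parts.
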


\begin{proof}
Let%
\[
w=\dot{\mu}-\left(  ik^{3}\mu+\left(  \partial+ik\right)  ^{3}\mu
+\overline{\partial}^{3}\mu-\frac{3}{4}u\left(  \partial+ik\right)  \mu
-\frac{3}{4}\overline{u}\overline{\partial}\mu\right)  .
\]
We will show that $w=0$ in two steps. First, we show that%
\[
\overline{\partial}_{k}w=e^{i\tau S}\mathbf{t}^\sharp \overline{w}.
\]
This is an easy consequence of the formulas%
\begin{align*}
\overline{\partial}_{k}\left(  \partial+ik\right)  \mu &  =e^{i\tau S
}\mathbf{t}^\sharp \overline{\overline{\partial}\mu},\\
\overline{\partial}_{k}\left(  \overline{\partial}\mu\right)   &
=e^{i\tau S}\mathbf{t}^\sharp\overline{\left(  \partial+ik\right)  \mu}%
\end{align*}
and holds for any smooth function $u$.

Next, we show that for any fixed values of the parameters $\tau$ and $z$,
\[
\lim_{\left\vert k\right\vert \rightarrow\infty}w(z,k,\tau)=0.
\]
Here we must choose $u=\overline{\partial}^{-1}\partial q$ in order for the
assertion to be correct. Owing to Lemma \ref{lemma:mu.exp.k} and the formula
\[
w=\dot{\mu}-\left(  \partial^{3}\mu+\overline{\partial}^{3}\mu+3ik\partial
^{2}\mu-3k^{2}\partial\mu-\frac{3}{4}u\left(  \partial+ik\right)  \mu-\frac
{3}{4}\overline{u}\overline{\partial}\mu\right)  ,
\]
we have
\begin{align*}
w  &  =-3ik\partial^{2}\mu+3k^{2}\partial\mu+\frac{3}{4}u\left(
\partial+ik\right)  \mu+\mathcal{O}\left(  k^{-1}\right) \\
&  =A_{-1}k+A_{0}+\mathcal{O}\left(  k^{-1}\right)
\end{align*}
where%
\[
A_{-1}=3\left(  \partial a_{0}+\frac{i}{4}u\right)
\]
and%
\[
A_{0}=3\left[  -i\partial^{2}a_{0}+\partial a_{1}+\frac{i}{4}ua_{0}\right]  .
\]
The condition $A_{-1}=0$ forces the choice $u=\overline{\partial}^{-1}\partial
q$. We may then compute%
\[
A_{0}=\frac{3}{16}\left[  \partial\overline{\partial}^{-1}\left(
q\overline{\partial}^{-1}q\right)  -\left(  \overline{\partial}^{-1}q\right)
\cdot\left(  \partial\overline{\partial}^{-1}q\right)  \right]  .
\]
One the one hand, $A_{0}$ vanishes as $\left\vert z\right\vert \rightarrow
\infty$ for each fixed $\tau$ by the decay of $q$. On the other hand, a
straightforward computation shows that $\overline{\partial}A_{0}=0$. It now
follows from Liouville's Theorem that $A_{0}=0$, and hence $w=O\left(
k^{-1}\right)  $. We now used the generalized Liouville Theorem to conclude
that $w=0$.
\end{proof}

Finally, we prove:

\begin{proposition}
Suppose that $\mathbf{t}^{\sharp}\in\mathcal{S}\left(  \mathbb{R}^{2}\right)  $.
Then, the formula%
\[
q(z,\tau)=\frac{4i}{\pi}\overline{\partial}_{z}\left(  \int e^{itS(z,k,\tau)}%
\mathbf{t}^{\sharp}(k)\overline{\mu(z,k,\tau)}~dm(k)\right)
\]
yields a classical solution of the NV equation.
\end{proposition}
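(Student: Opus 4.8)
The plan is to verify the NV equation in the form \eqref{eq:NV.Cauchy} directly from the reconstruction formula \eqref{eq:q.recon} and the equation of motion for $\mu$ established in Lemma \ref{lemma:mu.dot}. The organizing observation is that, writing $e^{i\tau S}=e_{-k}\exp(i\tau(k^{3}+\overline{k}^{3}))$, the integral in \eqref{eq:q.recon} is exactly the leading coefficient $c_{0}(z,\tau)$ in the large-$k$ expansion \eqref{eq:mu.exp.k} of $\mu(z,k,\tau)$; that is, $q=4i\overline{\partial}c_{0}$, precisely as in Lemma \ref{lemma:mu.exp.k} but now with $\tau$-dependent data. Since $\mu(\cdot,\cdot,\tau)$ solves the spatial equation $\overline{\partial}(\partial+ik)\mu=\tfrac{q}{4}\mu$ for each $\tau$, the coefficients $c_{j}(z,\tau)$ still obey the recurrence \eqref{eq:c.recur} and the explicit formulas \eqref{eq:c0}--\eqref{eq:c1} at every time. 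Consequently $q_{\tau}=4i\overline{\partial}\dot{c}_{0}$, and the whole problem reduces to computing $\dot{c}_{0}$ and showing that $4i\overline{\partial}\dot{c}_{0}$ equals $2\operatorname{Re}(\partial^{3}q-\tfrac34\partial(uq))$.

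To compute $\dot{c}_{0}$ I would feed the series $\mu\sim 1+\sum_{j\ge 0}c_{j}/k^{j+1}$ into the equation of motion of Lemma \ref{lemma:mu.dot}. After the two $ik^{3}\mu$ contributions cancel (one explicit, one from the expansion of $(\partial+ik)^{3}$), the right-hand side is $\partial^{3}\mu+3ik\partial^{2}\mu-3k^{2}\partial\mu+\overline{\partial}^{3}\mu-\tfrac34 u(\partial+ik)\mu-\tfrac34\overline{u}\,\overline{\partial}\mu$, and I would collect powers of $k$. The coefficients of $k^{1}$ and $k^{0}$ must vanish because the left-hand side $\dot{\mu}$ carries no nonnegative powers of $k$; these are exactly the identities $A_{-1}=0$ and $A_{0}=0$ already recorded in the proof of Lemma \ref{lemma:mu.dot}, the first of which forces $u=\overline{\partial}^{-1}\partial q$. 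Matching the coefficient of $k^{-1}$ then gives an explicit expression for $\dot{c}_{0}$ in terms of $\partial^{3}c_{0}$, $\partial^{2}c_{1}$, $\partial c_{2}$, $\overline{\partial}^{3}c_{0}$, and first-order terms involving $u$, $\overline{u}$, $c_{0}$, $c_{1}$.

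The final step is to apply $4i\overline{\partial}$ to this formula and convert each term into spatial derivatives of $q$ and $u$ using the recurrence \eqref{eq:c.recur} together with the identities $\overline{\partial}c_{0}=-\tfrac{i}{4}q$ and $\partial c_{0}=-\tfrac{i}{4}u$ (the latter because $u=\overline{\partial}^{-1}\partial q=\partial\overline{\partial}^{-1}q$). The three third-order contributions, coming from $\partial^{3}c_{0}$, $\partial^{2}c_{1}$ and $\partial c_{2}$, should combine through \eqref{eq:c.recur} into the single term $\partial^{3}q$, while the $\overline{\partial}^{3}c_{0}$ string likewise produces $\overline{\partial}^{3}q$; these furnish the linear dispersive part of NV. What remains are the first-order terms carrying $c_{1}$, $u\,\partial c_{0}$ and $\overline{u}\,\overline{\partial}c_{0}$. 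The main obstacle is to show that these collapse, after repeated use of \eqref{eq:c.recur}, the explicit forms \eqref{eq:c0}--\eqref{eq:c1}, and the commutation of $\partial$, $\overline{\partial}$, $\overline{\partial}^{-1}$, into exactly $-\tfrac{3}{4}\partial(uq)-\tfrac{3}{4}\overline{\partial}(\overline{u}q)$, the NV nonlinearity. This is the bookkeeping in which sign and coefficient conventions (including those in \eqref{eq:c0}--\eqref{eq:c1}) must be tracked with care.

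Two routine points would be dispatched along the way. First, differentiating \eqref{eq:q.recon} under the integral sign and differentiating the large-$k$ expansion term by term are justified by the Schwartz decay of $\mathbf{t}^{\sharp}$ and the uniform large-$k$ control of $\mu$ coming from the remainder estimate in Lemma \ref{lemma:mu.exp.k}. Second, to read the outcome as a genuine solution of \eqref{eq:NV.Cauchy} one verifies that the symmetry of $\mathbf{t}^{\sharp}$ inherited from conductivity-type data makes $q$ real and makes $u=\overline{\partial}^{-1}\partial q$ satisfy $\overline{\partial}u=\partial q$; granting these, the computation above is precisely the statement that $q(z,\tau)$ solves the Novikov-Veselov equation.
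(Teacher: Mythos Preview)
Your approach is correct and is a genuinely different organization from the paper's. Both proofs take Lemma \ref{lemma:mu.dot} as the key dynamical input and both ultimately need the identity $\overline{\partial}^{-1}\bigl(q\,\overline{\partial}^{-1}q\bigr)=\tfrac{1}{2}\bigl(\overline{\partial}^{-1}q\bigr)^{2}$ to close the algebra, but they process that input quite differently. The paper differentiates the integral formula \eqref{eq:q.recon} directly, then uses the commutation relations $\partial\,e^{i\tau S}=e^{i\tau S}(\partial-ik)$ and $\overline{\partial}\,e^{i\tau S}=e^{i\tau S}(\overline{\partial}-i\overline{k})$ together with the spatial equation \eqref{eq:NV.mu3} to push differential operators outside the integral; this leaves a residual integral $I$ which is split into $I_{1}$ and $I_{2}$ and simplified through further integral manipulations. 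You instead recognize the integral as $\pi c_{0}(z,\tau)$, so that $q_{\tau}=4i\overline{\partial}\dot{c}_{0}$, and read off $\dot{c}_{0}$ by matching the $k^{-1}$ coefficient in the equation of motion; the remaining work is pure algebra in the coefficients $c_{0},c_{1},c_{2}$ via the recurrence \eqref{eq:c.recur}. Your route is more systematic---one never touches the integral again after the identification $q=4i\overline{\partial}c_{0}$---while the paper's route stays closer to the analytic object and does not in principle require the full asymptotic expansion (though here $\mathbf{t}^{\sharp}\in\mathcal{S}$ makes this moot). One caution: the explicit formula \eqref{eq:c1} for $c_{1}$ carries a sign that is inconsistent with the recurrence \eqref{eq:c.recur}; when you carry out your computation, derive $c_{1}$ freshly from the recurrence rather than quoting \eqref{eq:c1}, and the terms then collapse cleanly to $-\tfrac{3}{4}\partial(uq)-\tfrac{3}{4}\overline{\partial}(\overline{u}q)$ as you anticipate.
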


\begin{proof}
In what follows, we will freely use the commutation relations%
\begin{align}
\partial e^{i\tau S}  &  =e^{i\tau S}\left(  \partial-ik\right)
,\label{eq:NVcom1}\\
\overline{\partial}e^{i\tau S}  &  =e^{i\tau S}\left(  \overline{\partial
}-i\overline{k}\right)  \label{eq:NV.com2}%
\end{align}
and the equation
\begin{equation}
\overline{\partial}\left(  \partial+ik\right)  \mu=\frac{1}{4}q\mu.
\label{eq:NV.mu3}%
\end{equation}
For notational brevity we'll write $c=4i/\pi$. We compute%
\begin{align*}
\dot{q}  &  =
c\overline{\partial}
	\left(  \int e^{i\tau S}\mathbf{t}^\sharp\left\{
		ik^{3}+i\overline{k}^{3}\right\}  \mu\right) \\
		&  +c\overline{\partial}\left(  \int e^{i\tau S} \mathbf{t}^\sharp
			\left\{  -i\overline{k}%
		^{3}+\left(  \overline{\partial}-i\overline{k}\right)  ^{3}+\partial^{3}%
			-\frac{3}{4}\overline{u}\left(  \overline{\partial}-i\overline{k}\right)
			-\frac{3}{4}u\partial\right\}  \overline{\mu}\right)
\end{align*}
where in the second term we used Lemma \ref{lemma:mu.dot}. Using the
commutation relations above to move differential operators to the left of
$\exp\left(  itS\right)  $, we conclude that%
\[
\dot{q}=\partial^{3}q+\overline{\partial}^{3}q-\frac{3}{4}\overline{\partial
}\left(  u\partial\overline{\partial}^{-1}q\right)  -\frac{3}{4}%
\overline{\partial}\left(  \overline{u}q\right)  +I
\]
where%
\[
I=c\overline{\partial}\left(  \int\left\{  3ik\partial^{2}-3k^{2}%
\partial-\frac{3}{4}iku\right\}  e^{i\tau S}\mathbf{t}^\sharp \overline{\mu}\right)  .
\]
We claim that
\begin{equation}
I=\frac{3}{4}\left\{  \overline{\partial}\left(  u\partial\overline{\partial
}^{-1}q\right)  -\partial\left(  uq\right)  \right\}  . \label{eq:NV.I}%
\end{equation}
If so, then $q$ solves the NV\ equation as claimed. To compute $I$, write
$I=I_{1}-I_{2}$ where%
\begin{align*}
I_{1}  &  =c\overline{\partial}\left(  \int\left\{  3ik\partial^{2}%
-3k^{2}\partial\right\}  e^{i\tau S}\mathbf{t}^\sharp\overline{\mu}\right)  ,\\
I_{2}  &  =c\overline{\partial}\left(  \int e^{i\tau S}\mathbf{t}^\sharp\left\{  \frac{3}%
{4}iku\right\}  \overline{\mu}\right)  .
\end{align*}
Using (\ref{eq:NV.mu3}) and (\ref{eq:NV.com2}) we may write%
\begin{align*}
I_{1}  &  =\frac{3}{4}c\partial\left(  \int e^{i\tau S}\left(  ik\right)
\mathbf{t}^{\sharp} q\overline{\mu}\right) \\
&  =\frac{3}{4}c\partial\left(  q\int\left(  -\partial e^{i\tau S}\right)
\mathbf{t}^{\sharp}\overline{\mu}\right) \\
&  =-\frac{3}{4}c\partial\left(  q\partial\left(  \int e^{i\tau S}%
\mathbf{t}^{\sharp}\overline{\mu}\right)  -q\int e^{i\tau S}\mathbf{t}^{\sharp}\partial\overline{\mu}\right) \\
&  =-\frac{3}{4}\partial\left(  q\partial\overline{\partial}^{-1}q\right)
+\frac{3}{4}c\partial\left(  q\overline{\partial}^{-1}\overline{\partial
}\left(  \int e^{i\tau S}\mathbf{t}^{\sharp}\partial\overline{\mu}\right)  \right)
\end{align*}
where in the third line we used $u\partial v=\partial\left(  uv\right)
-v\partial u$. \ In the second term on the fourth line, we may use%
\begin{equation}
\overline{\partial}e^{i\tau S}\partial\overline{\mu}=e^{i\tau S}%
\partial\left(  \overline{\partial}-i\overline{k}\right)  \overline{\mu}
\label{eq:NV.com3}%
\end{equation}
and (\ref{eq:NV.mu3}) to conclude that%
\begin{align*}
\frac{3}{4}c\partial\left(  q\overline{\partial}^{-1}\overline{\partial
}\left(  \int e^{i\tau S}\mathbf{t}^{\sharp}\partial\overline{\mu}\right)  \right)   &
=\frac{3}{16}c\partial\left(  q\overline{\partial}^{-1}q\left(  \int
e^{i\tau S}\mathbf{t}^{\sharp}\overline{\mu}\right)  \right) \\
&  =\frac{3}{16}\partial\left(  q\overline{\partial}^{-1}\left(
q\overline{\partial}^{-1}q\right)  \right)
\end{align*}
so that%
\begin{align*}
I_{1}  &  =-\frac{3}{4}\partial\left(  q\partial\overline{\partial}%
^{-1}q\right)  +\frac{3}{16}\partial\left(  q\overline{\partial}^{-1}\left(
q\overline{\partial}^{-1}q\right)  \right) \\
&  =-\frac{3}{4}\partial\left(  qu\right)  +\frac{3}{16}\partial\left(
q\overline{\partial}^{-1}\left(  q\overline{\partial}^{-1}q\right)  \right)  .
\end{align*}
Similarly, we may compute%
\begin{align*}
I_{2}  &  =\frac{3}{4}c\overline{\partial}\left(  u\int ik~e^{i\tau S
}\mathbf{t}^{\sharp}\overline{\mu}\right) \\
&  =\frac{3}{4}c\overline{\partial}\left(  u\int\left(  -\partial
e^{i\tau S}\right)  ~\mathbf{t}^{\sharp}\overline{\mu}\right) \\
&  =-\frac{3}{4}c\overline{\partial}\left(  u\partial\left(  \int
e^{i\tau S}\mathbf{t}^{\sharp}\overline{\mu}\right)  -u\int e^{i\tau S}\mathbf{t}^{\sharp}\partial
\overline{\mu}\right) \\
&  =-\frac{3}{4}\overline{\partial}\left(  u\overline{\partial}^{-1}\partial
q\right)  +\frac{3}{4}c\overline{\partial}\left(  u\overline{\partial}%
^{-1}\left(  \int \mathbf{t}^{\sharp}\overline{\partial}\left(  e^{i\tau S}\partial
\overline{\mu}\right)  \right)  \right)  .
\end{align*}
Using (\ref{eq:NV.com3}) again we find
\begin{align*}
I_{2}  &  =-\frac{3}{4}\overline{\partial}\left(  u\overline{\partial}%
^{-1}\partial q\right)  +\frac{3}{16}\overline{\partial}\left(  u\overline
{\partial}^{-1}\left(  q\overline{\partial}^{-1}q\right)  \right) \\
&  =-\frac{3}{4}\overline{\partial}\left(  u^{2}\right)  +\frac{3}%
{16}\overline{\partial}\left(  u\overline{\partial}^{1}\left(  q\overline
{\partial}^{-1}q\right)  \right)
\end{align*}
where we used (\ref{eq:NV.mu3}) in the first line, and in the second line we
used $u=\overline{\partial}^{-1}\partial q$. Hence
\begin{align*}
I_{1}-I_{2}  &  =-\frac{3}{4}\partial\left(  qu\right)  +\frac{3}{4}%
\overline{\partial}\left(  u^{2}\right) \\
&  +\frac{3}{16}\partial\left(  q\overline{\partial}^{-1}\left(
q\overline{\partial}^{-1}q\right)  \right)  -\frac{3}{16}\overline{\partial
}\left(  u\overline{\partial}^{1}\left(  q\overline{\partial}^{-1}q\right)
\right)  .
\end{align*}
Since $\partial q=\overline{\partial}u$ and $\overline{\partial}^{-1}\left(
q\overline{\partial}^{-1}q\right)  =\frac{1}{2}\left(  \overline{\partial
}^{-1}q\right)  ^{2}$ we can conclude that the second line is zero and
(\ref{eq:NV.I}) holds. The conclusion now follows.
\end{proof}

\section{Special Solutions}  \label{sec:Special}

There are various powerful methods to find solutions of nonlinear
evolution equations, most notably the inverse scattering method.  However, the inverse scattering method is not readily useful for finding closed-form
solutions to the NV equation, and so techniques including  Hirota's method and the extended mapping approach (EMA)  are presented here to construct closed-form solutions of several types of solitons.   We begin by explaining the close connection between plane-wave solutions to NV and solutions to the KdV equation and present evolutions of KdV ring-type solutions.  Although the KdV ring-type solition is not of conductivity type, the scattering transform is computed in Section \ref{subsec:tk_ring_sol}, and the numerical results provide evidence of the presence of an exceptional circle.

\vspace{1em}

\subsection{KdV-type Solutions} \label{subsec:KdVsols}

Consider the NV equation ~\eqref{eq:NV.Cauchy} in the form
\begin{equation}\label{eq:NV_num} 
\dot q = -\frac{1}{4}q_{xxx} +\frac{3}{4}q_{xyy} + \frac{3}{4}\,\operatorname{div} ((q-E)\mathbf{u}),
\end{equation}
where $u=u_1+iu_2$ and $\mathbf{u} = (u_1,u_2)$, and the auxiliary equation $\overline{\partial} u=\partial q$ as
\begin{equation}
  \label{eq:dbar_num}
\left\{ 
    \begin{array}{rcl} 
    (u_1)_x - (u_2)_y &=& +q_x\\
    (u_2)_x +(u_1)_y &=& -q_y
  \end{array}
  \right.
\end{equation}
As in \cite{CrokeMuellerStahel13} we use a FFT-based method to solve the
equations on the square $-L\leq x,y\leq +L$
with periodic boundary conditions. 

To examine the linear contributions we introduce a function
$$q(t,x,y)=\exp(i\,(\xi\,x+\eta\,y)).$$  Then the
$\overline{\partial}$ equation~\eqref{eq:dbar_num} is solved by
\begin{eqnarray*}
   u_1(t,x,y) &=& \frac{\xi^2-\eta^2}{\xi^2+\eta^2}\;\exp(i\,(\xi\,x+\eta\,y))\\
   u_2(t,x,y) &=& \frac{-2\,\xi\,\eta}{\xi^2+\eta^2}\;\exp(i\,(\xi\,x+\eta\,y)),
\end{eqnarray*}
and thus the linear part of the NV equation~\eqref{eq:NV_num}
\[\dot q
  = -\frac{1}{4}\frac{\partial^3}{\partial x^3}\,q
    +\frac{3}{4}\frac{\partial^3}{\partial x\partial y^2}\,q
 + E\,\frac{3}{4} \nabla \cdot \mathbf{u} \]
is transformed into a elementary linear ODE for the Fourier coefficient $c(t)$
\[  4\,\frac{d}{dt}\, c(t) = i\,\left( \xi^3-3\,\xi\,\eta^2\right)\,
\left(1-\frac{3\,E}{\xi^2+\eta^2}\right)\;c(t).\]
Assuming a Fourier approximation of the solutions
\[ q(t,x,y) = \sum_{j,k=0}^{N-1} c_{j,k}(t)\;\exp(i \pi\,(k\,x+j\,y)/L)\] 
this leads to a coupled system of ODEs for the Fourier coefficients~$c_{j,k}(t)$.
We use a Crank--Nicolson scheme for the linear part of NV and an
explicit method for the nonlinear contribution $\operatorname{div}( q\,\mathbf{u})$.
For details see \cite{CrokeMuellerStahel13, RyansThesis}.
\bigskip

There is a close connection between plane wave solutions to NV and solutions to KdV (see \cite{CrokeMuellerStahel13}):

\begin{remark}
\label{rem:NVtoKdV}
Assume the solutions to NV are planar waves
\begin{eqnarray*}
   q(t,s) &=& q(t,x,y) = q(t,n_1s,n_2s) \\
   u_i(t,s) &=& u_i(t,x,y) = u_i(t,n_1s,n_2s) 
\end{eqnarray*}
for some direction vector $\vec n = (n_1,n_2) = (\cos(\alpha),\sin(\alpha))$.
Then the bounded solutions to the $\overline{\partial}$
equation~\eqref{eq:dbar_num} are given by 
\begin{eqnarray*}
 u_1(t,s) &=& +(n_1^2 -n_2^2)\; q(t,s)+c_1\\
 u_2(t,s) &=& -(2\,n_1\,n_2)\;q(t,s)+c_2  
\end{eqnarray*}
for arbitrary constants $c_1, c_2$, and the NV equation~\eqref{eq:NV_num}
reduces to an equation similar to the KdV equation,
\begin{equation}
  \label{eq:NVKdV}
  \frac{4}{\kappa}\, q_t =  - q^{\prime\prime\prime} + 6\,q\,q^{\prime}
  +\frac{3\,\beta}{\kappa}\,q^{\prime} 
\end{equation}
with $\kappa = \cos(3\,\alpha)$ and $\beta = -\kappa\,E+c_1\,n_1+c_2\,n_2$.
If $v(t,s)$ denotes a solution to the standard KdV equation
\begin{equation}
  \label{eq:KdV}
\dot v(t,x) = - v^{\prime\prime\prime}(t,x) + 6\, v(t,x)\,v^{\prime}(t,x),
\end{equation}
we obtain explicit solutions to NV by
\[ q(t,s) = v\left(\frac{\kappa}{4}\, t\,,\,s+k_1\,t\right) - k_2
  = v\left(\frac{\kappa}{4}\, t\,,\,s+\frac{3}{4}\,(c_1\,n_1 + c_2\,n_2)\,t\right)
   + \frac{E}{2}.\]
For the special case $c_1=c_2=0$ we find
\[ q(t,n_1s,n_2s)=q(t,s) = v\left(\frac{\kappa}{4}\, t\,,\,s\right) + \frac{E}{2}\]
Thus we can relate all solutions to KdV as planar solutions to NV,
with different speeds of propagation depending on the direction of the plane wave.
An example of this are the solutions generated by Hirota's method, see section~\ref{subsub:Hirota}.
\end{remark}

\begin{example}
Based on the above remark we have an exact solution of the NV equation
at zero energy given by
\[ q(t,x,y) = -2\,c\,\cosh^{-2}(\sqrt{c}(x-c\,t)).\]
This solution is unstable with respect to perturbations periodic in
$y$~direction with period $\frac{2\,\pi}{k\,\sqrt{c}}$ for
$0.363<k<1$, see \cite{CrokeMuellerStahel13}. This is confirmed by numerical
evolution of the NV equation using the above spectral method.
\end{example}

\begin{example} \label{ex:3}  A KdV ring initial condition.\\
Using $r=\sqrt{x^2+y^2}$ we choose a radially symmetric initial value
\[ q_0(x,y)=q(0,x,y) = f(r)=-\frac{1}{2}\,\cosh^{-2}(\frac{1}{2}\,(r-20))<0\;.\]
This corresponds to  a solution of equation \eqref{eq:NVKdV} in the
radial variable $r$, a KdV ring with radius~20.

Observing that $0$ is a potential of conductivity type, we may use the argument in~\cite[Appendix B]{MPS:2013} (based on~\cite{Murata:1986})
to conclude that $q_0$, as a non-positive deviation from~0, is subcritical and
consequently not of conductivity type. In~\cite{Murata:1986} Murata
classifies general solutions of the Schr\"{o}dinger equation $\Delta  u=q\,u$,
For our special case we have an elementary proof for the required result.
For $q(0,x,y)$ to be of conductivity type we would need a positive, radially
symmetric function $u$ such that $ \Delta u = q\,u$ or in radial coordinates
$ (r\,u^\pr(r))^\pr = r\,f(r)\,u(r)$.
Since the function is radially symmetric we use $u^\pr(0)=0$ and an
integration leads to
\[  r\,u^\pr(r) = 0+\int_0^r s\,f(s)\,u(s)\; ds <0\]
Thus $u^\pr(r)$ is negative and $r\,u^\pr(r)$ is
decreasing. Consequently we have a constant $C=-r_0\,u^\pr(r_0)>0$ and
for all $r\geq r_0$ we conclude $u^\pr(r)\leq -C/r$. This implies
\begin{eqnarray*}
  u(r) &=& u(r_0) + \int_{r_0}^r u^\pr(s)\; ds
  \leq u(r_0) - \int_{r_0}^s \frac{C}{s}\; ds
  = u(r_0) -C\,(\ln(r) -\ln(r_0))
\end{eqnarray*}
and for $r$ large enough this is in contradition to $u(r)>0$ and thus
$q_0(x,y)$ is not of conductivity type.

We solve the NV equation at zero energy $E=0$ with the above initial
condition. Based on the speed profile with the angularly dependent
speed factor $\kappa=\cos(3\,\alpha)$ from Remark~\ref{rem:NVtoKdV},
one expects that the initially circular shape will be deformed and its
shape will be more triangular at later times. This is confirmed by
Figure~\ref{fig:EvolutionZeroEnergy}, which shows graphs of
$-q(t,x,y)$ at different times~$t$.

To examine the possible blowup of the solution at a finite time we ran
the algorithm based on the spectral method on a domain $-50\leq
x,y\leq +50$ with Fourier grids of sizes $1024\times 1024$, $2048\times
2048$ and $4096\times 4096$. With time steps $dt=0.01$ and $dt=0.001$
we examined the solution and its $L_2$ norm. In all cases the solution
either blew up at times just beyond $t=38$ or displayed a sudden
occurrence of sizable noise. For a final decision of a blow up time
the exact shape and size of the spikes in Figure~\ref{fig:EvolutionZeroEnergy}
have to be examined carefully.

Letting the KdV ring initial condition evolve for negative times, we
observed the same solutions as for positive times, but rotated by in
the spatial plane by $60^\circ$. We observed blowup at times smaller
than $t=-38$.

\begin{figure}[htbp]
  \begin{center}
    \leavevmode
      \includegraphics[width=6cm]{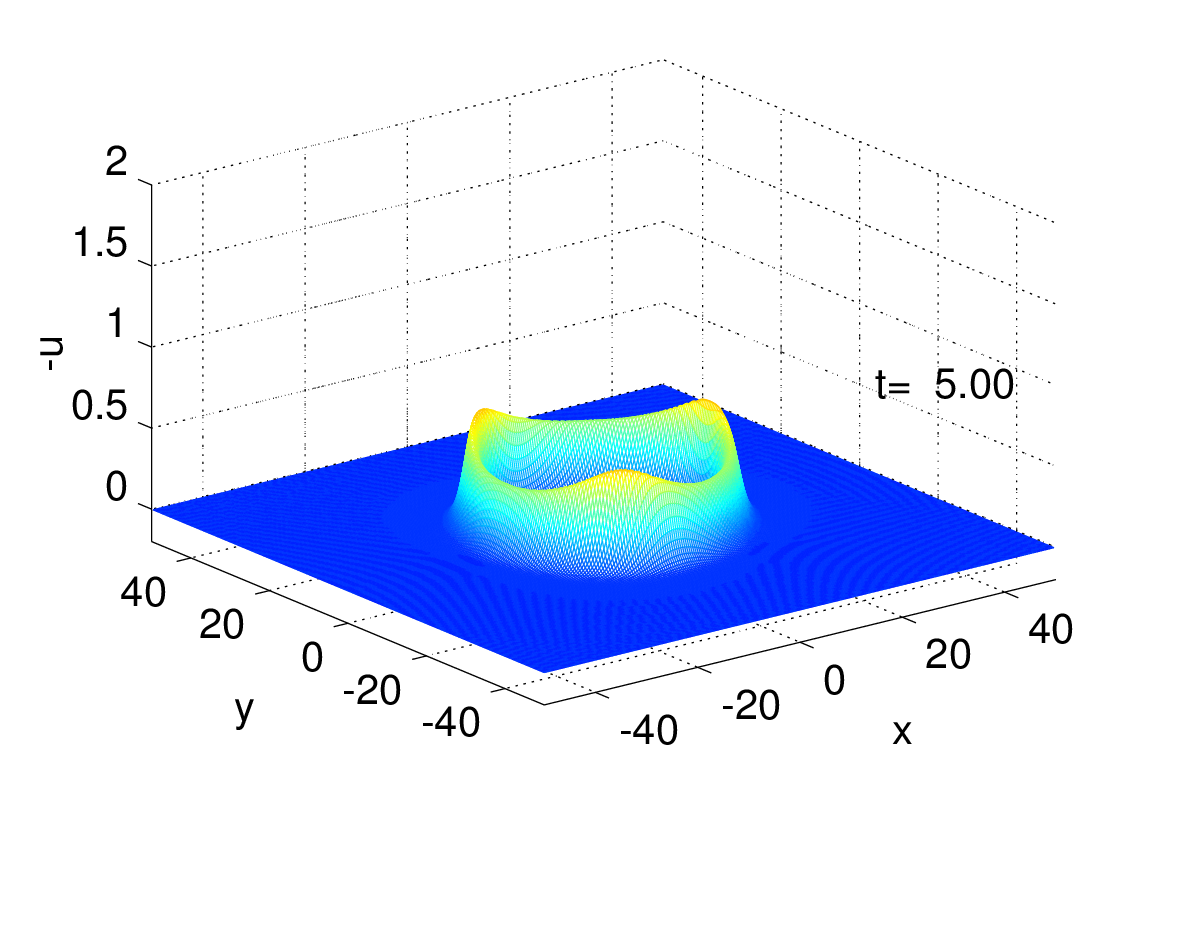}
      \includegraphics[width=6cm]{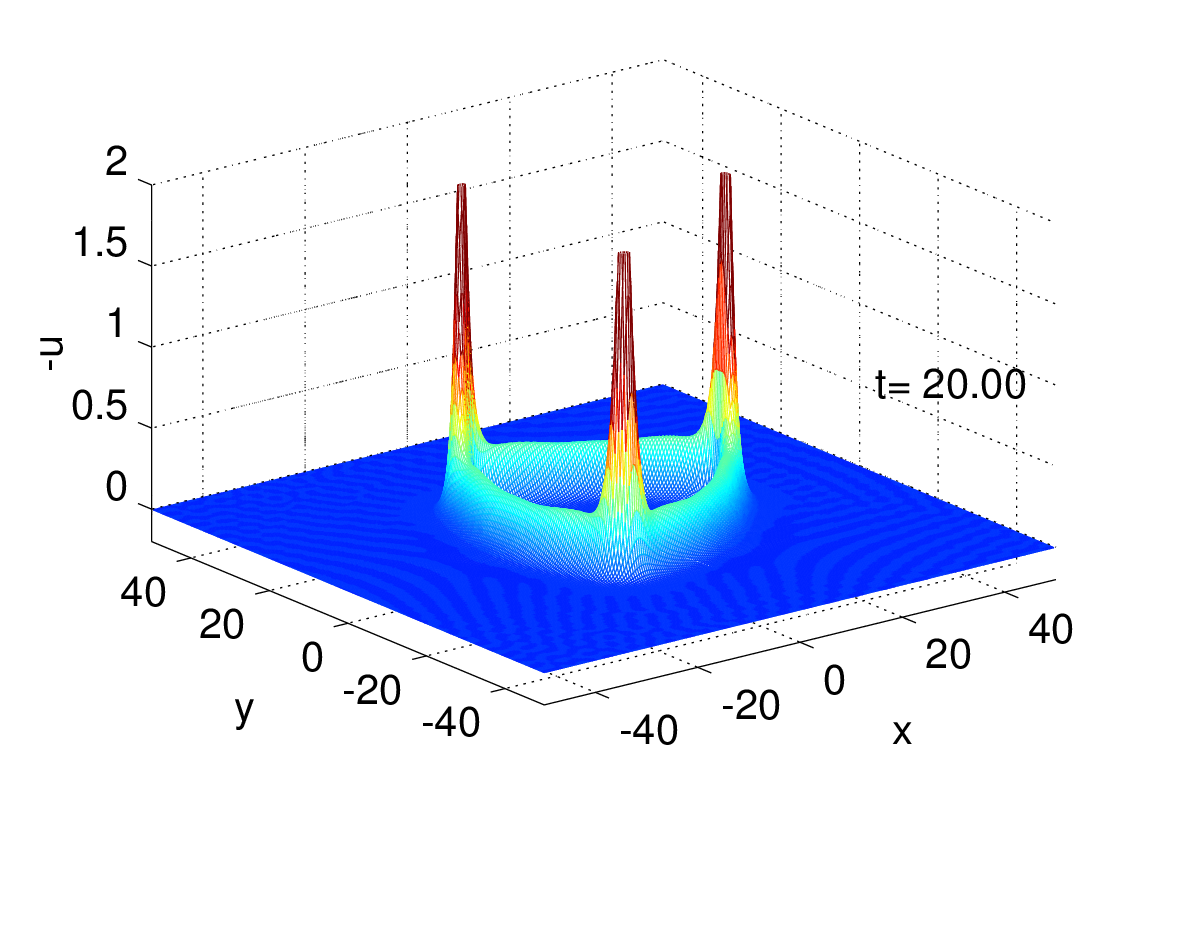}\\
      \includegraphics[width=6cm]{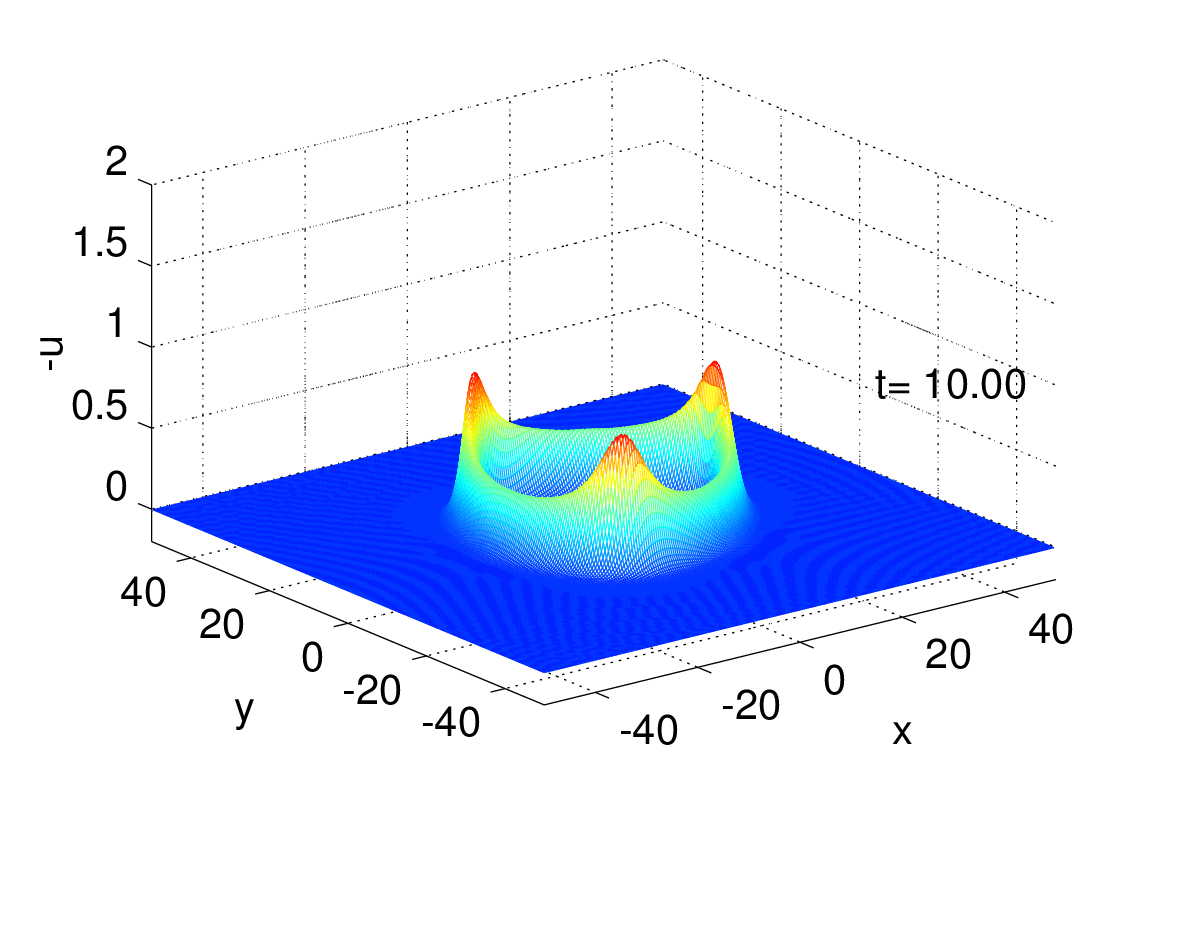}
      \includegraphics[width=6cm]{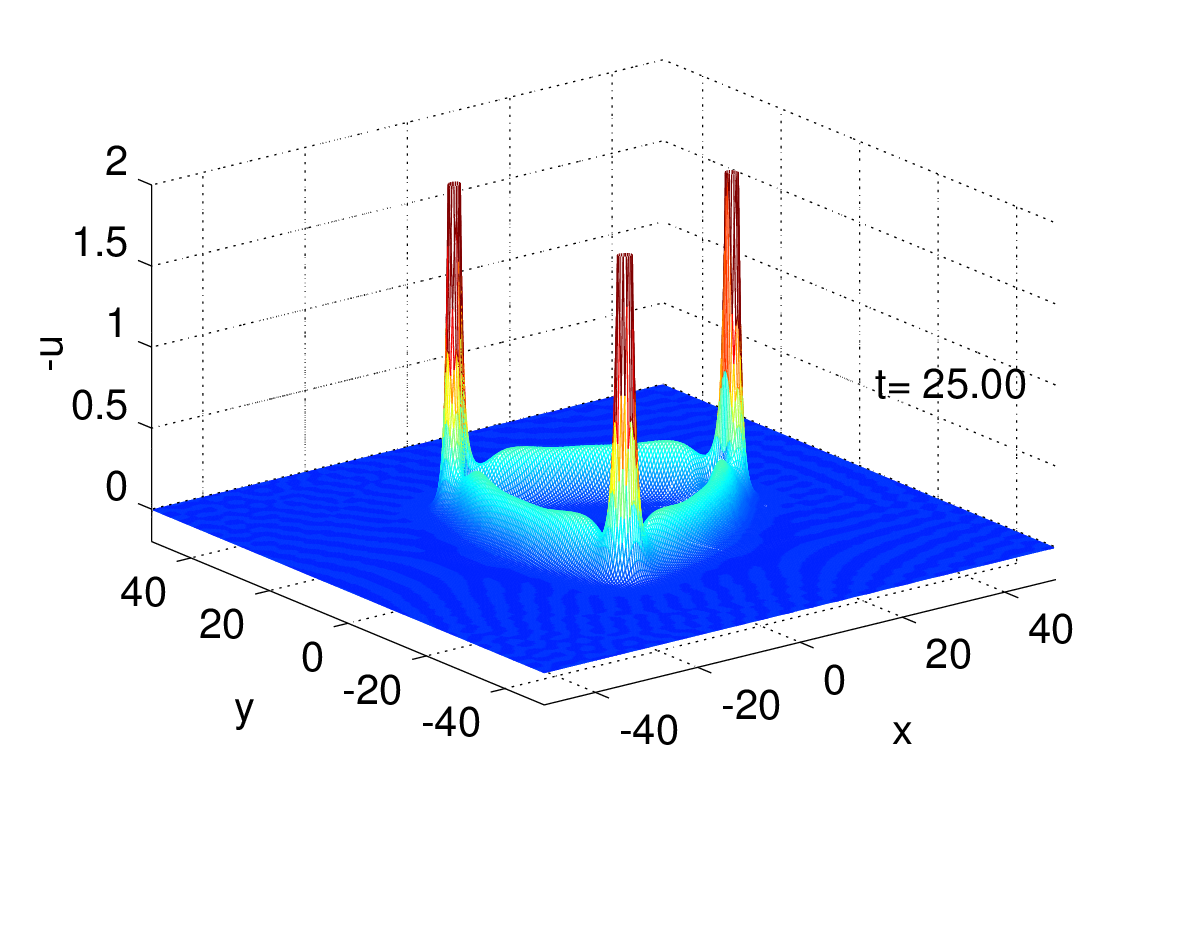}\\
      \includegraphics[width=6cm]{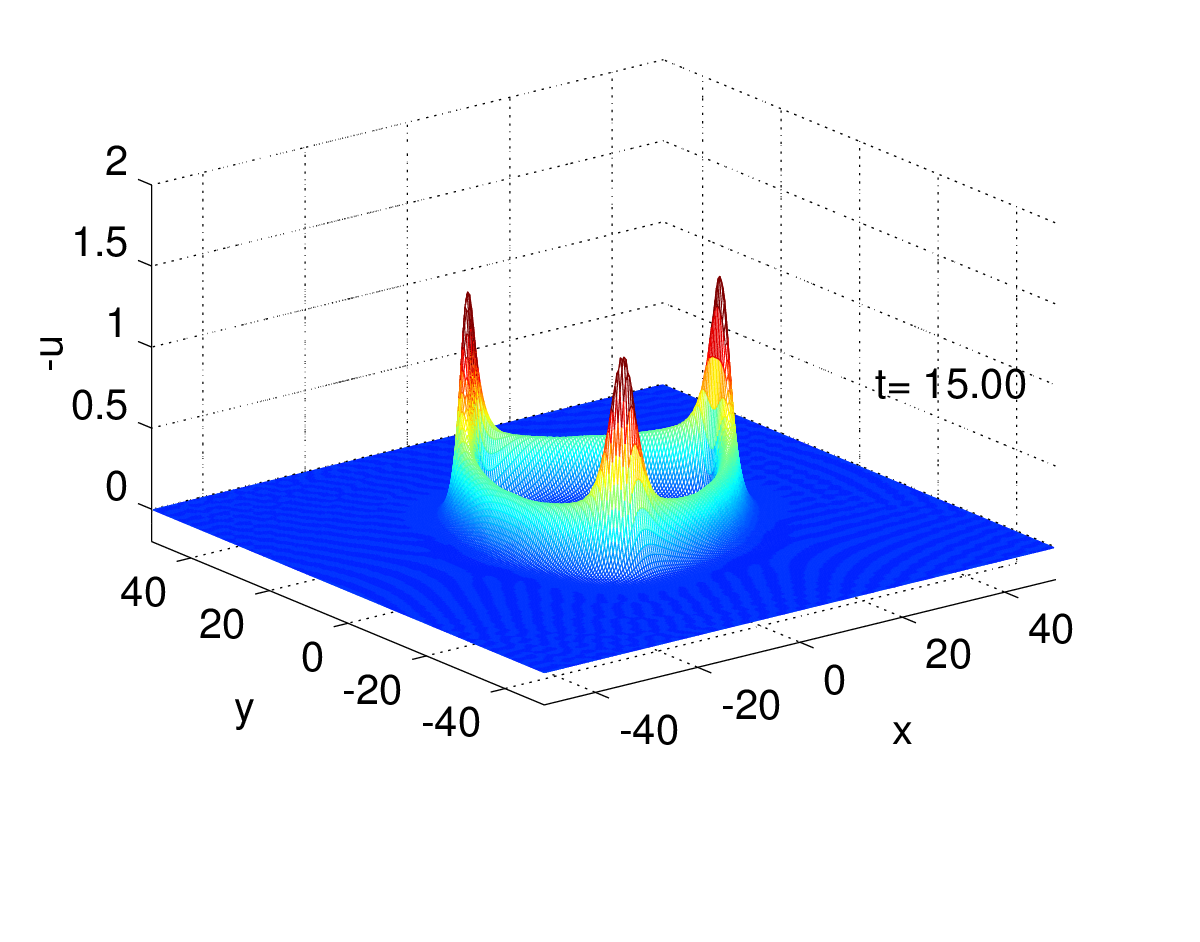}
      \includegraphics[width=6cm]{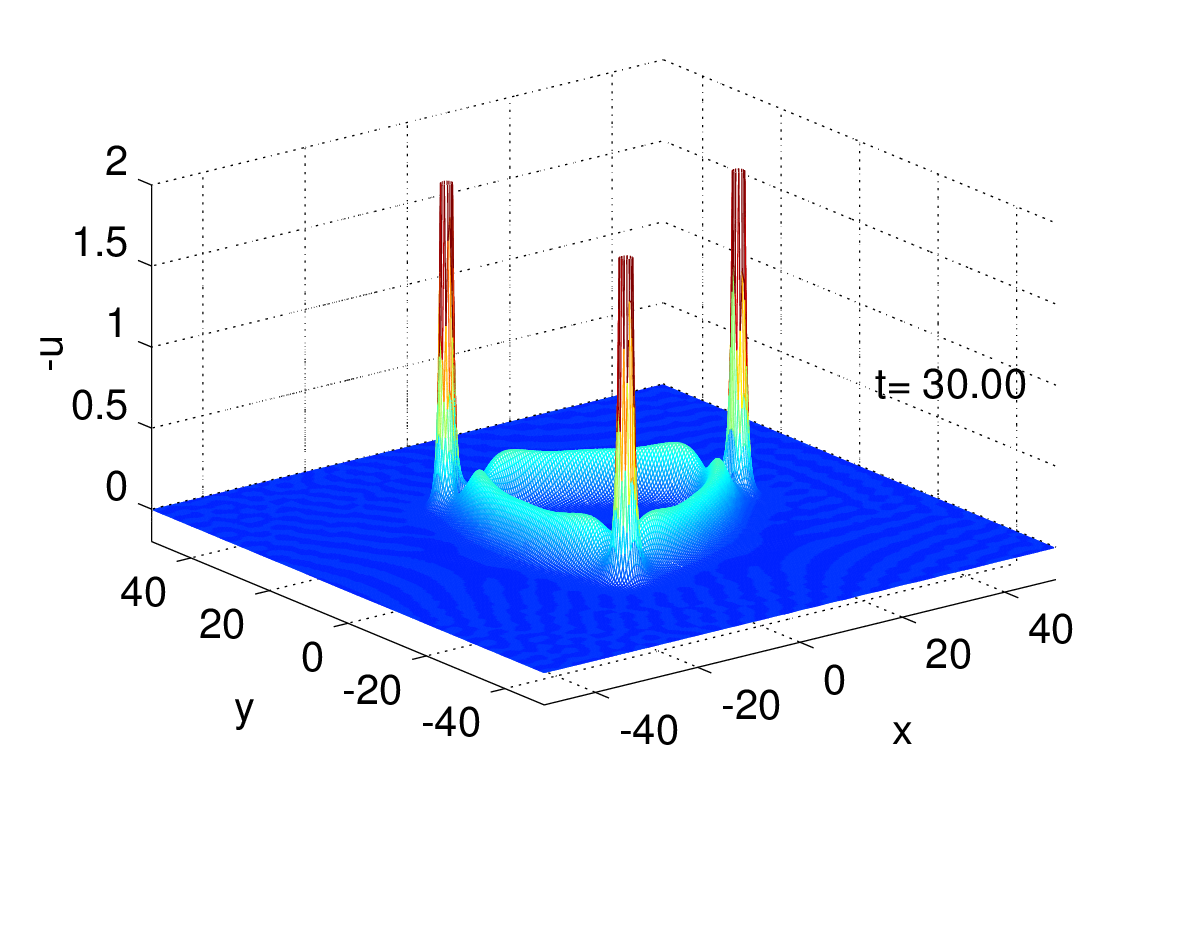}
    \caption{Evolution of a KdV ring by NV at zero energy }
    \label{fig:EvolutionZeroEnergy}
  \end{center}
\end{figure}
\end{example}

\begin{example}
With the initial value of the KdV ring in Example \ref{ex:3}, we evolve the
solution by the NV equation~\eqref{eq:NV_num} and~\eqref{eq:dbar_num}
with a positive energy $E=1/8$. Time snapshots of the
evolution are plotted in Figure~\ref{fig:EvolutionPositiveEnergy}. 
The initial dynamics are comparable to the previous example at zero
energy: three spikes appear and grow rapidly in size, but as time progresses, these spikes decay in amplitude,  and  separate from the previous KdV ring, and a new triple
of spikes appears. The process is repeated.
Observe that the solution exists at least until time $t =100$, also
confirmed by the graph of the $L_2$ norm of the solution as function
of time in Figure~\ref{fig:EvolutionPositiveEnergyNorm}.
\end{example}

\begin{figure}[htbp]
  \begin{center}
    \leavevmode
      \includegraphics[width=4.1cm]{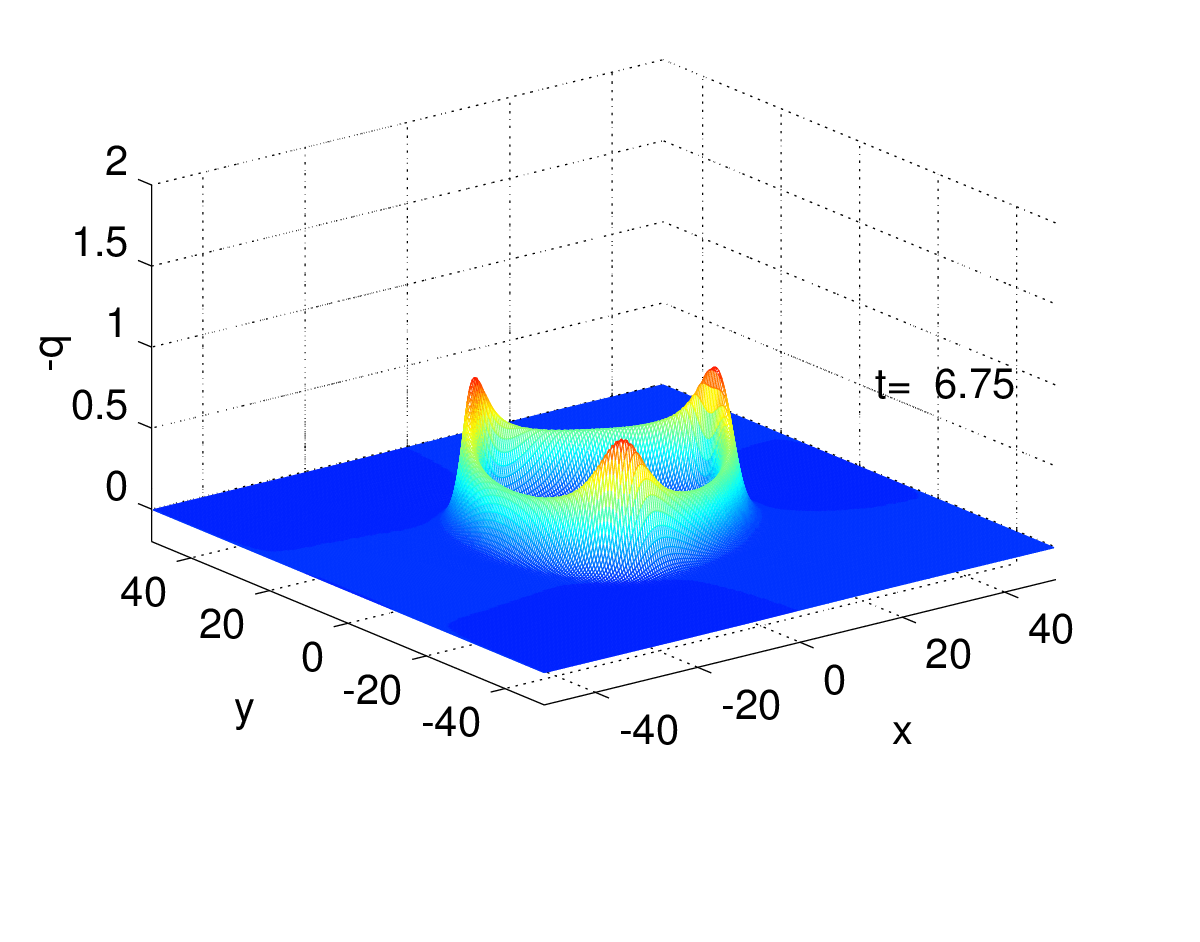}
      \includegraphics[width=4.1cm]{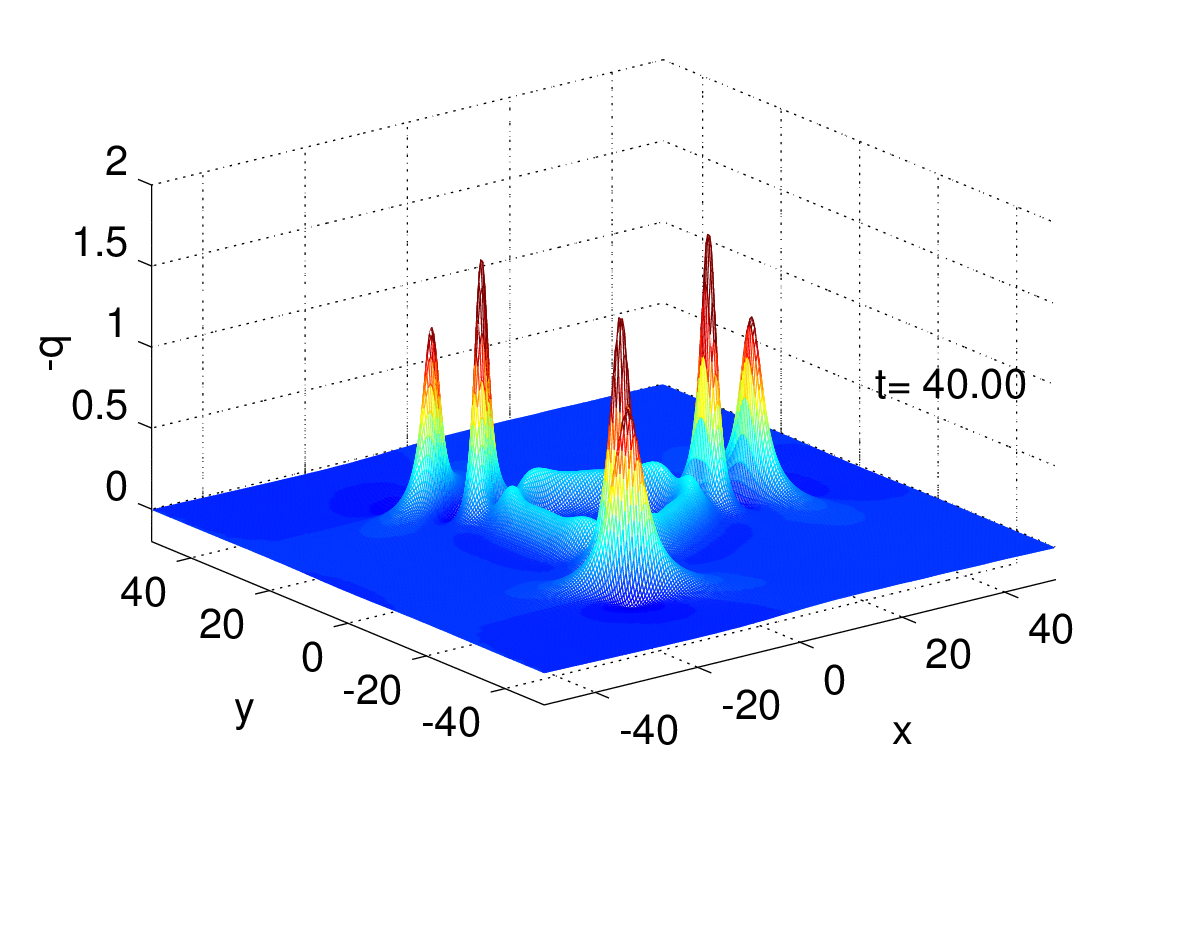}
      \includegraphics[width=4.1cm]{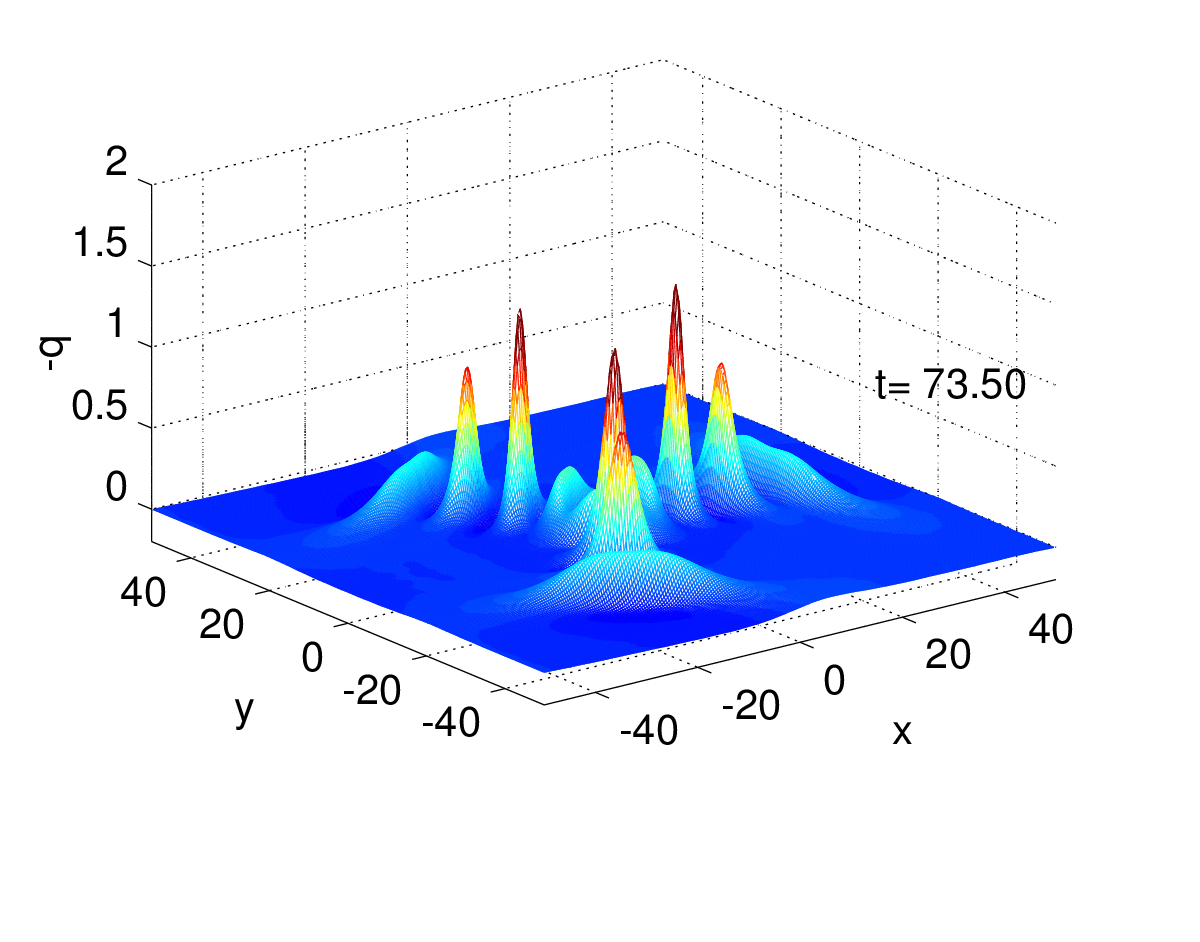}\\
      \includegraphics[width=4.1cm]{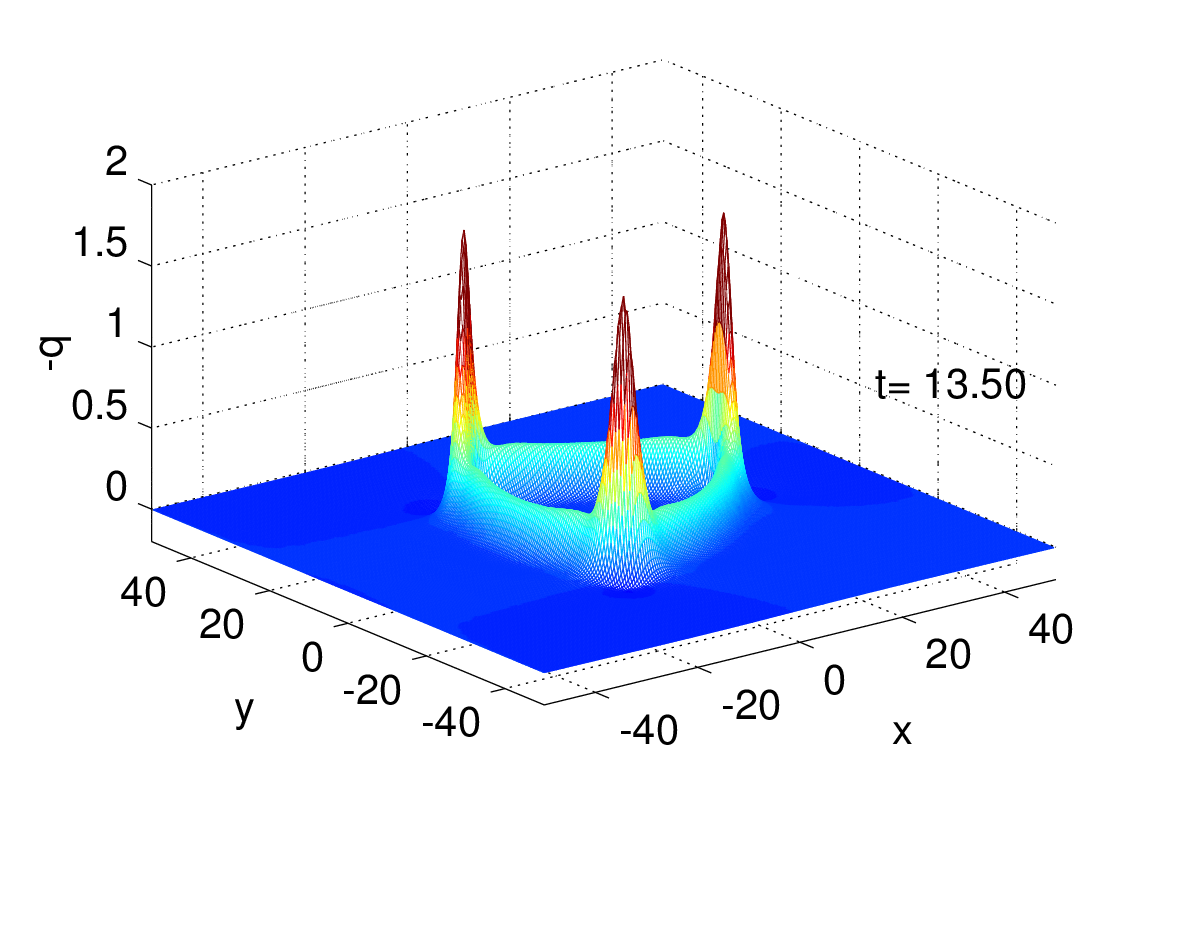}
      \includegraphics[width=4.1cm]{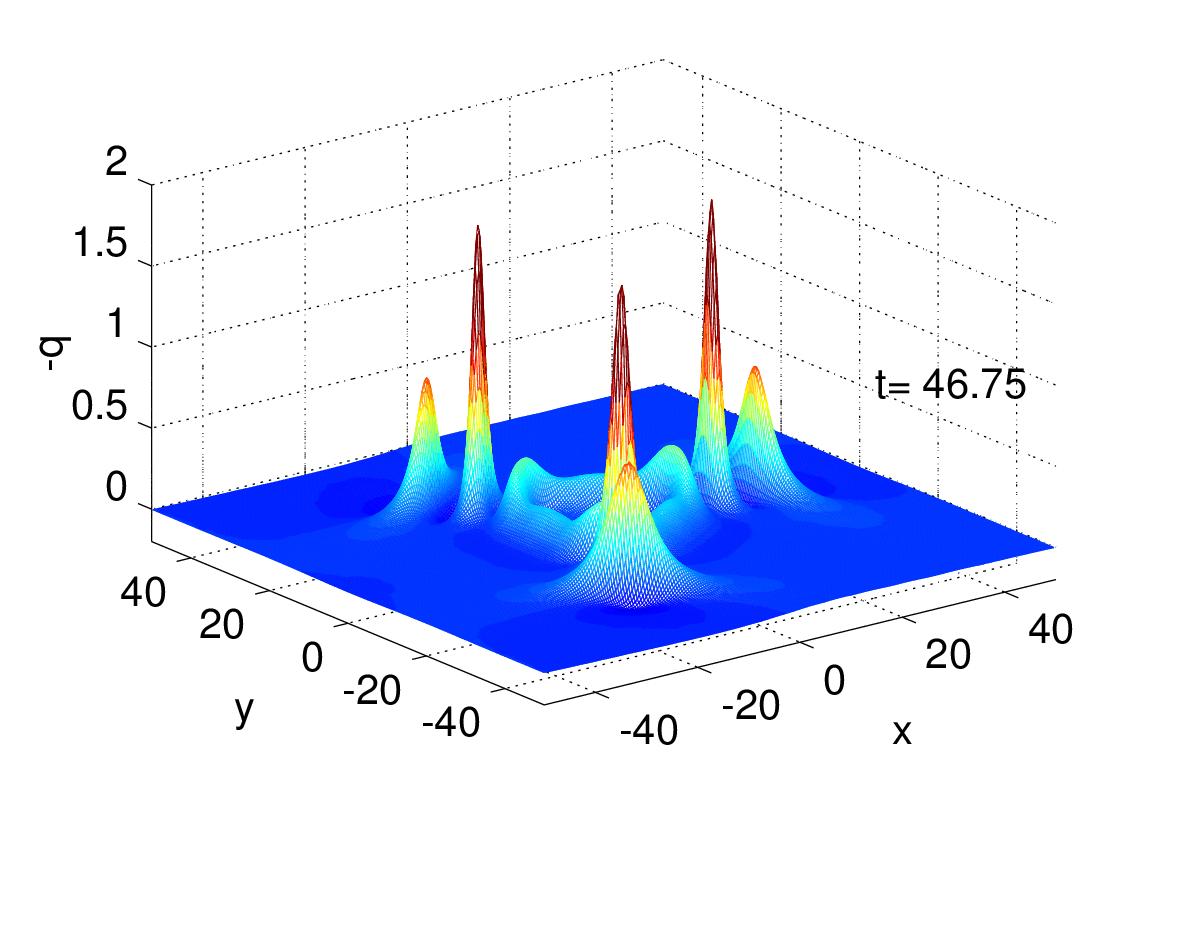}
      \includegraphics[width=4.1cm]{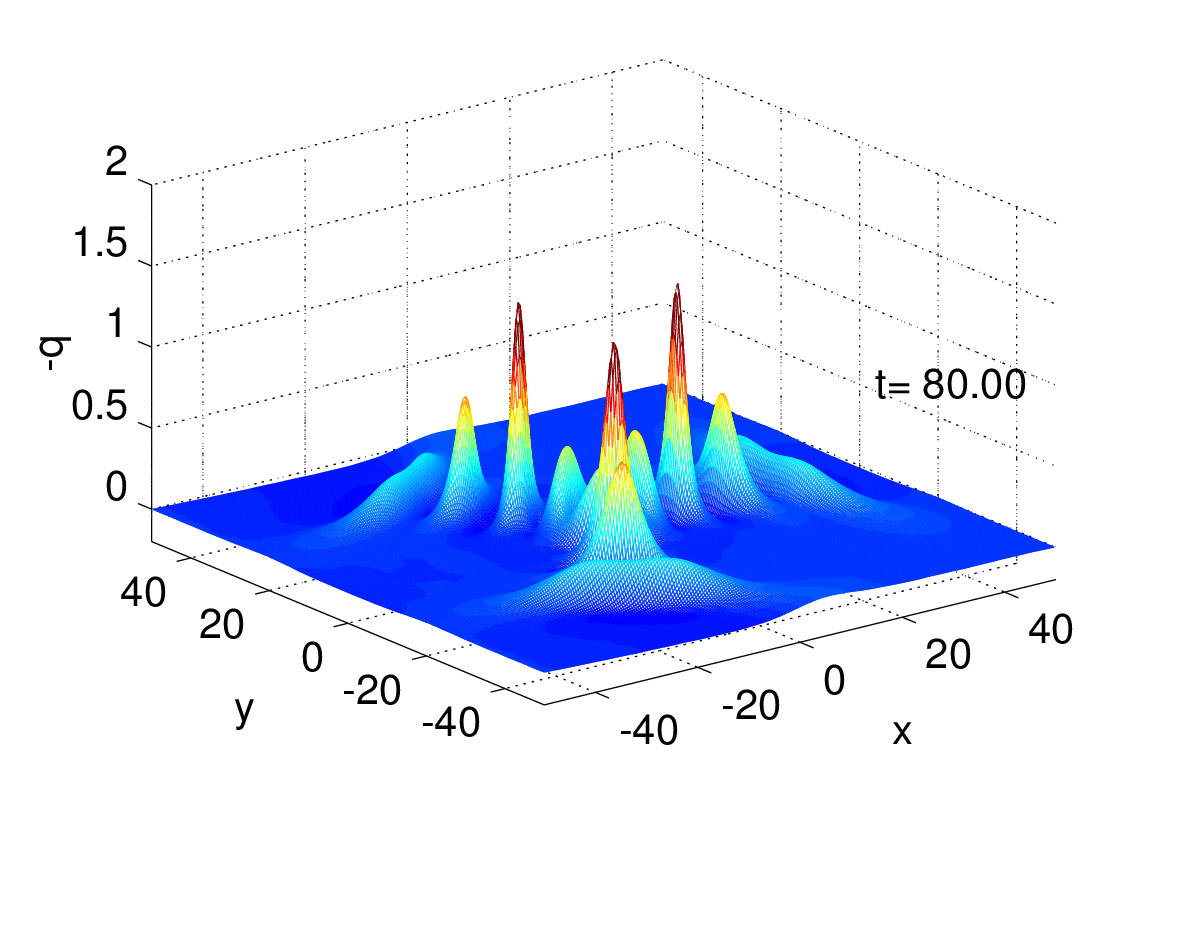}\\
      \includegraphics[width=4.1cm]{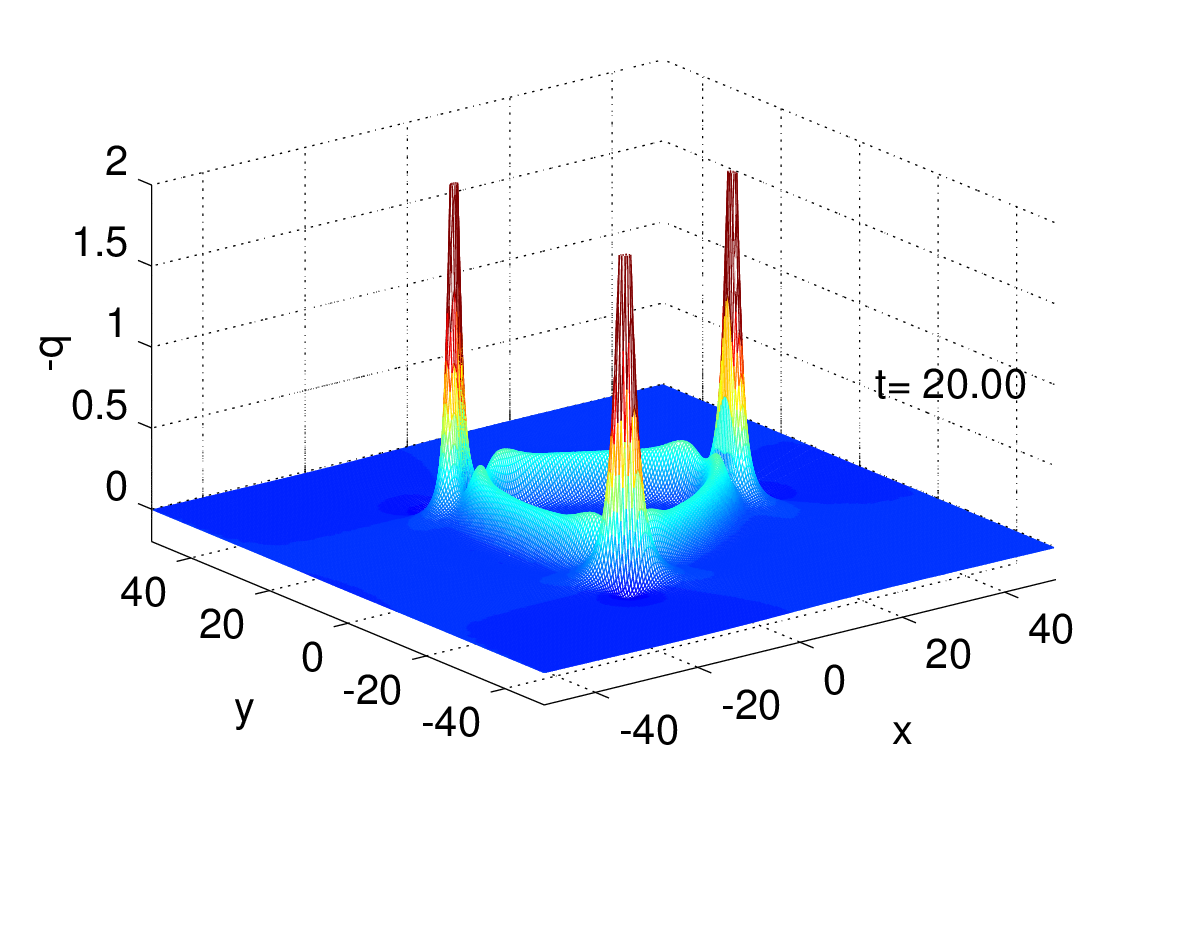}
      \includegraphics[width=4.1cm]{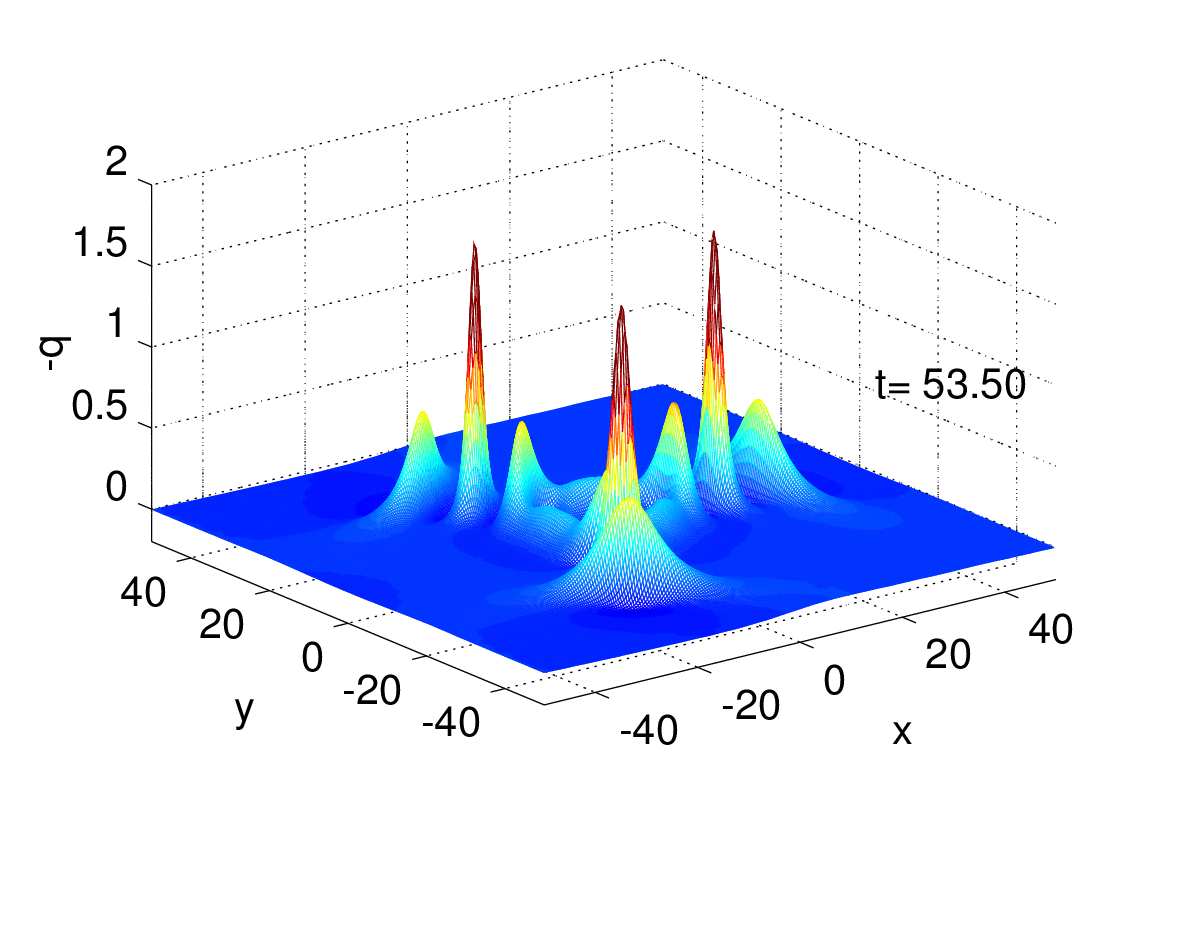}
      \includegraphics[width=4.1cm]{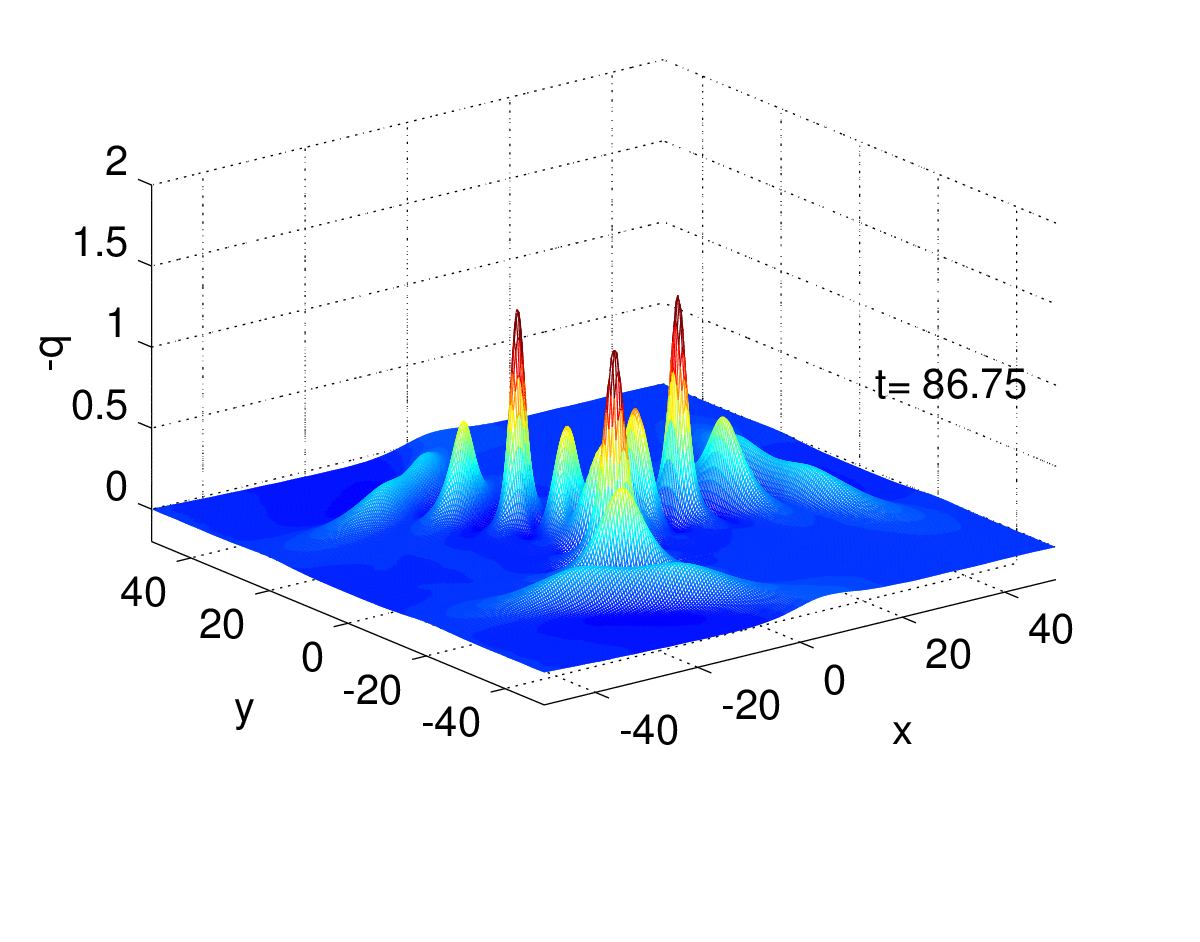}\\
      \includegraphics[width=4.1cm]{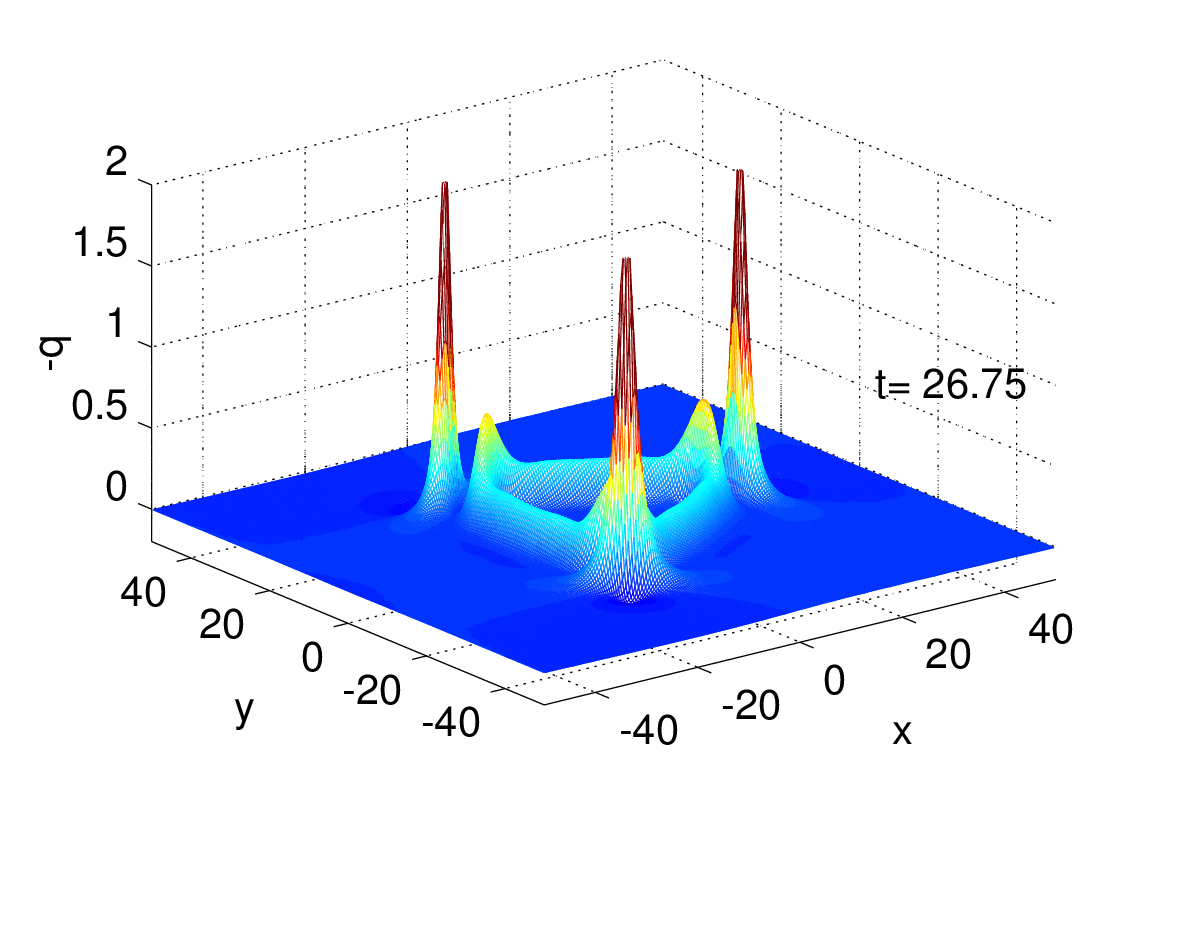}
      \includegraphics[width=4.1cm]{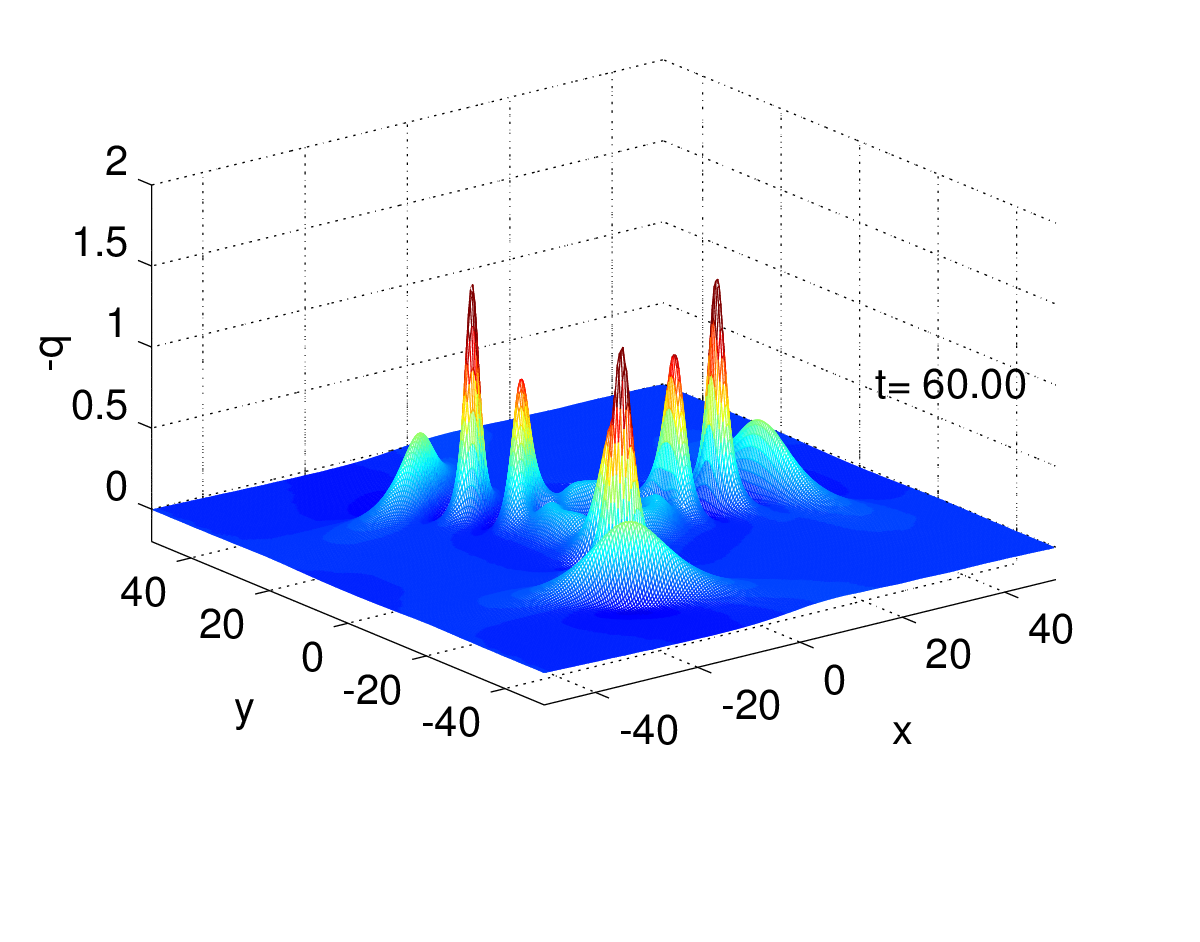}
      \includegraphics[width=4.1cm]{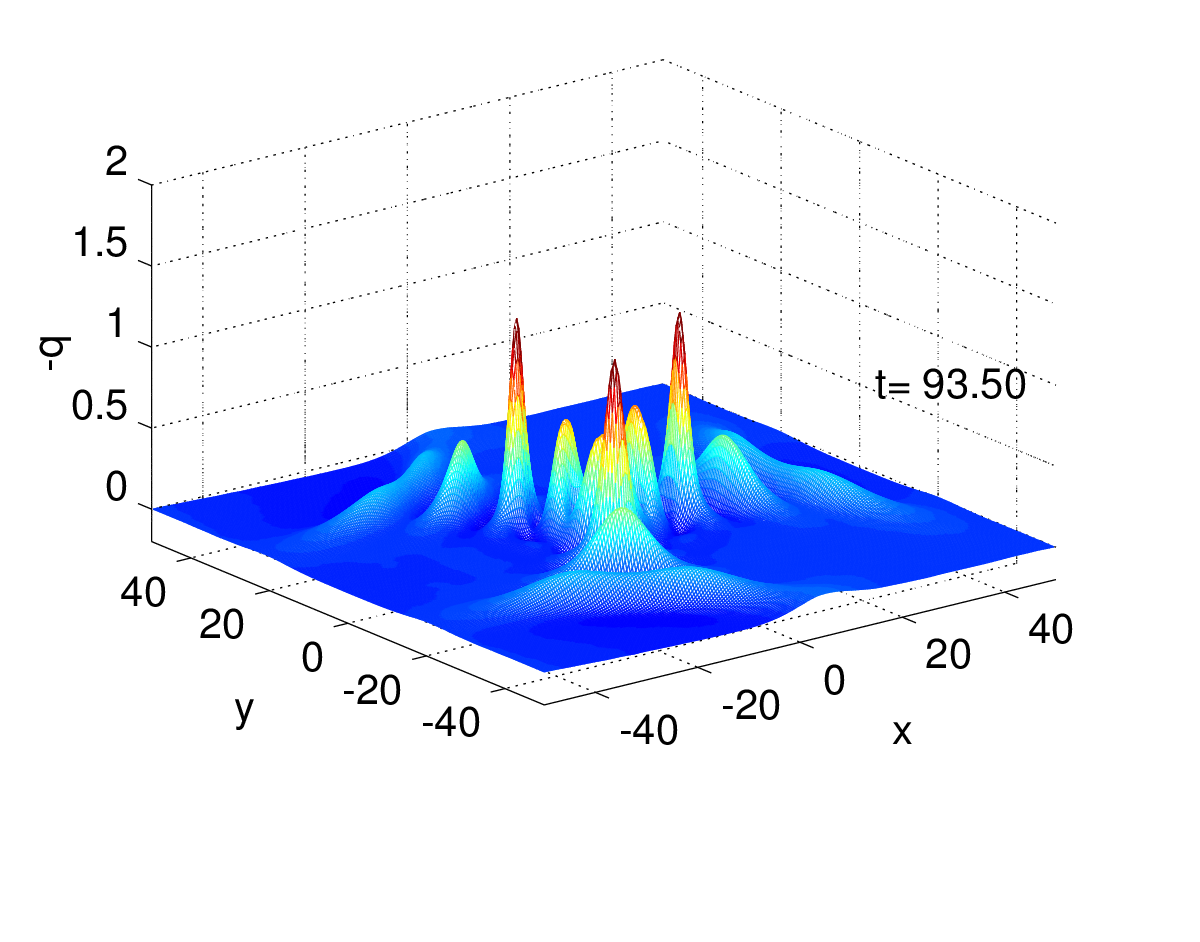}\\
      \includegraphics[width=4.1cm]{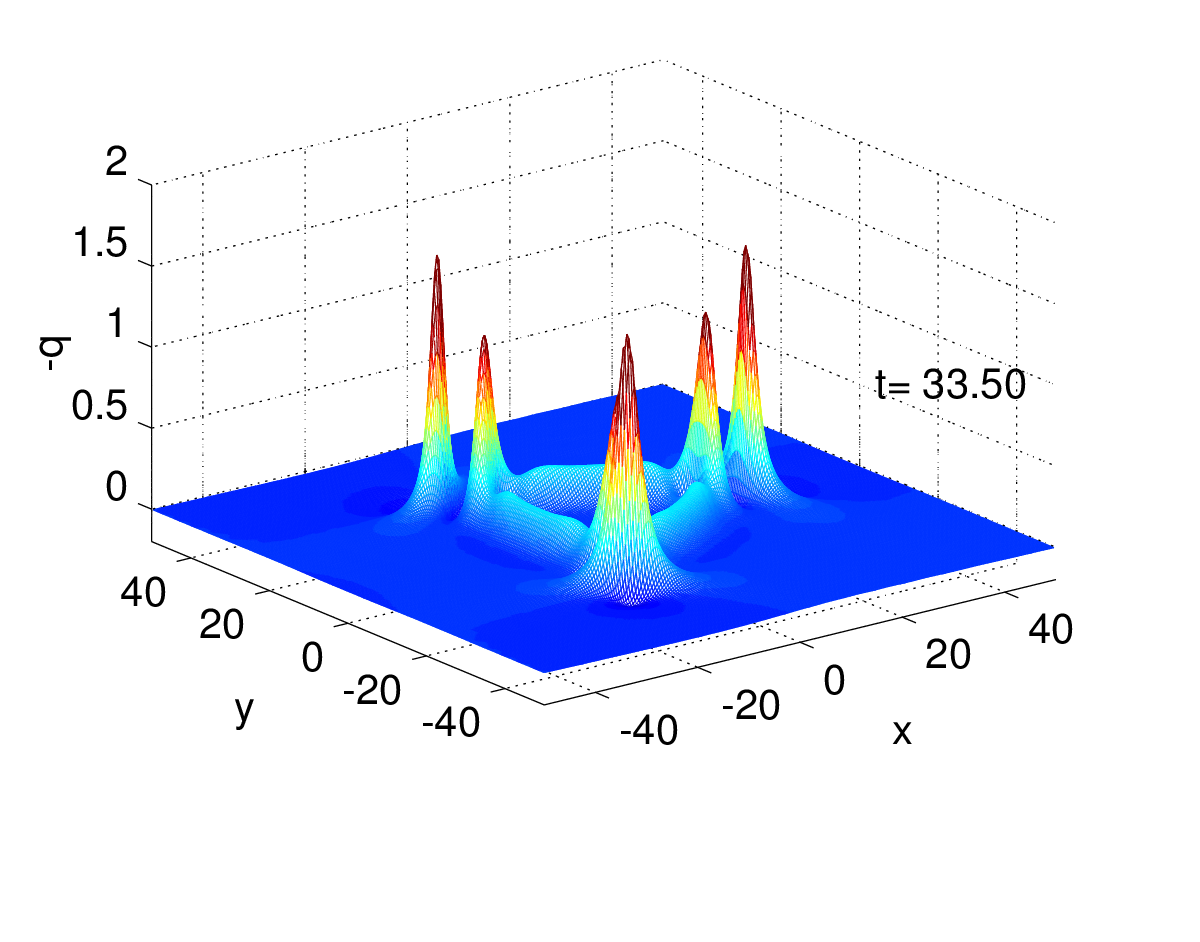}
      \includegraphics[width=4.1cm]{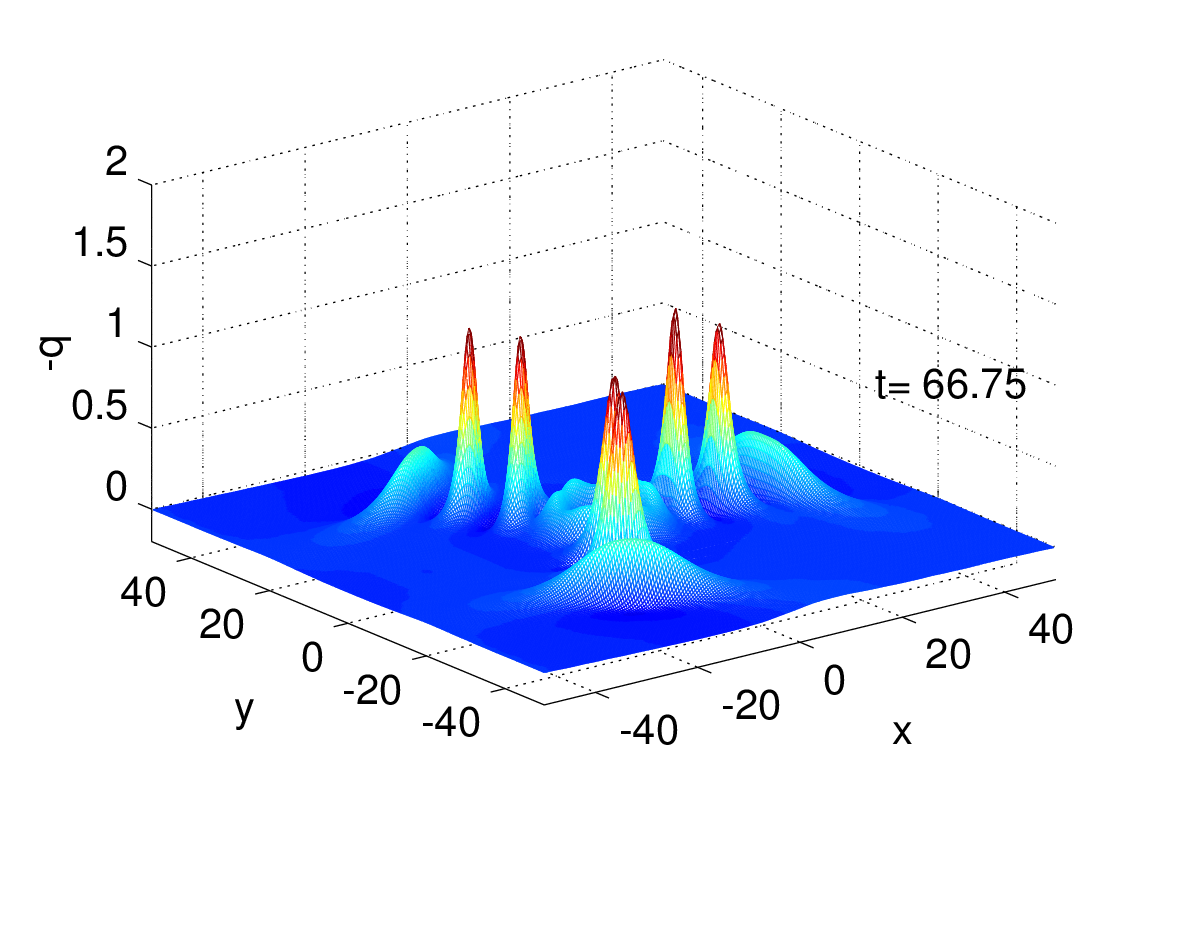}
      \includegraphics[width=4.1cm]{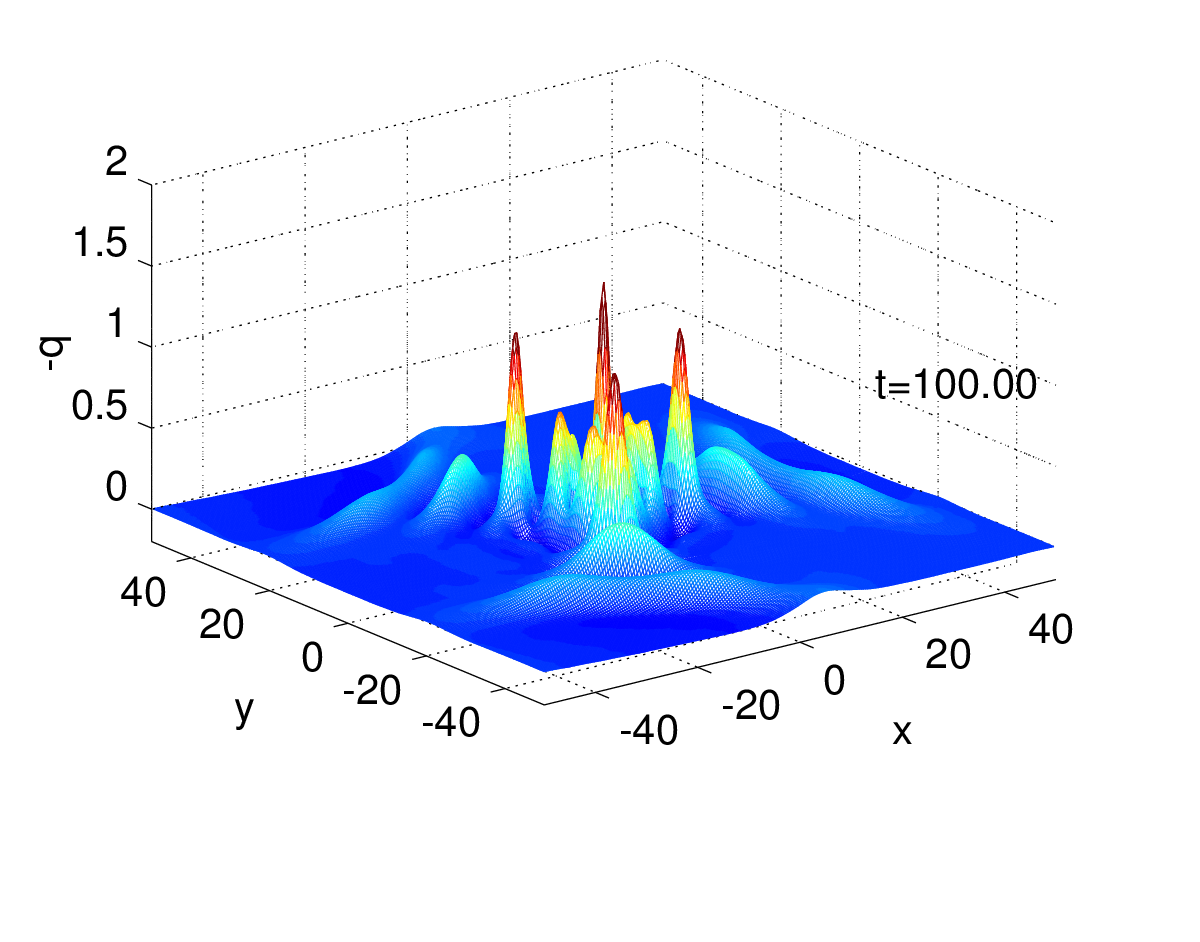}
    \caption{Evolution of a KdV ring by NV at positive energy}
    \label{fig:EvolutionPositiveEnergy}
  \end{center}
\end{figure}

\begin{figure}[htbp]
  \begin{center}
    \leavevmode
      \includegraphics[width=5.5cm]{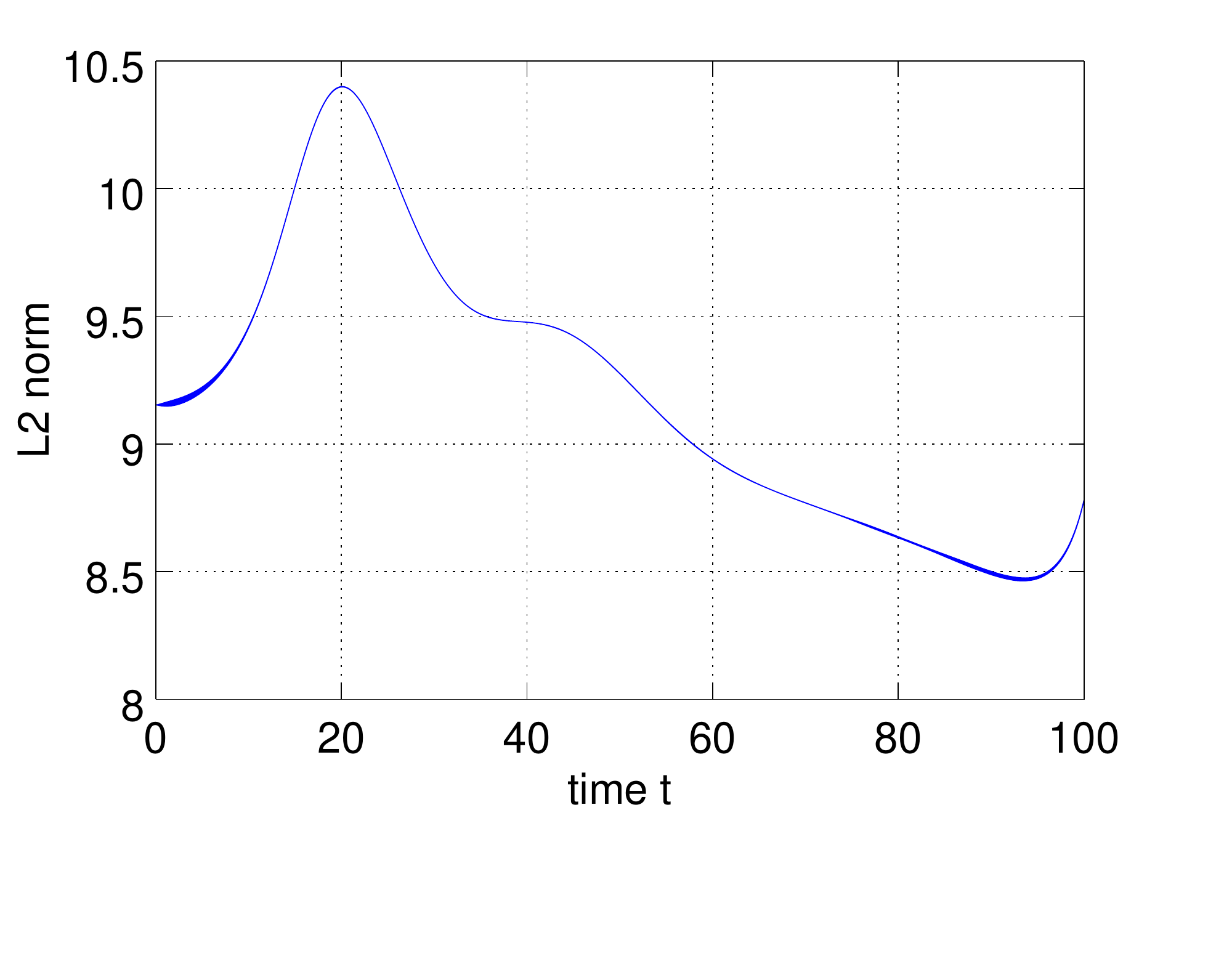}
    \caption{$L_2$ norm of the positive-energy NV solution of Figure 			
    		\ref{fig:EvolutionPositiveEnergy}.}
    \label{fig:EvolutionPositiveEnergyNorm}
  \end{center}
\end{figure}

\clearpage

\subsection{Closed-form Solutions} \label{subsec:RyansThesis}

Most, if not all,  soliton equations admit traveling waves solutions
that involve the hyperbolic secant function, which can be written in
terms of the hyperbolic tangent function.  Moreover, the hyperbolic
tangent function is a solution to the Riccati equation, $\phi' = l_0 +
\phi^2$, for $l_0 < 0$ for certain initial conditions.  The ubiquity
of the hyperbolic functions as traveling wave solutions naturally leads
to the idea of expansion methods for solving soliton equations.

Solutions in the literature to the NV equation include the solutions
from the inverse scattering transform
\cite{LMS:2007,LMSS:2011,LMSS:2012,VN:1984},
the classic hyperbolic secant and cnoidal solutions \cite{Nickel:2006},
and rational solutions derived using Darboux transformations that lead
to finite time blow--up (see \cite{TT:2010b} and references therein). In this section we present new solutions
to the NV equation using Hirota's bilinear method and the Extended Mapping Approach (EMA).  New multiple traveling wave solutions using the Modified Extended Tanh-Function Method can be found in \cite{RyansThesis}.  This approach results in closed-form solutions, most of which contain singularities.  We note that the solutions found by Hirota's method are plane-wave solutions, that is, KdV-type solutions, while the EMA-derived solutions are not.

\subsubsection{Hirota's Method} \label{subsub:Hirota}

Following the pioneering work of Hirota \cite{Hirota:1973}, multisoliton solutions can be derived using {\em Hirota's bilinear method}.  This method yields soliton solutions as a sum of polynomials of exponentials and was used in \cite{Wazwaz:2010} to find multisoliton solutions to the Nizhnik-Novikov-Veselov equation
\begin{eqnarray*}
\dot q &=& -aq_{xxx} +bq_{yyy} -3a(qu_1)_x -3b(qu_2)_y \\
q_x &=& (u_1)_y \\
q_y &=& (u_2)_x
\end{eqnarray*}
  The main idea is to reduce the nonlinear equation to a bilinear form through a transformation involving the logarithmic function. To express the wave velocity $c$ in terms of the dispersive coefficients, assume $u$ is a plane wave solution with $k_1=k_2=k$, $u=e^{kx+ky-ct}$, and substitute $u$ into \eqref{eq:linearNV}.  This results in $c=-k^3/2$.  Under the transformations $q=R(\ln(f))_{xx}$, $v=R(\ln(f))_{xy}$, and $w=R(\ln(f))_{yy}$ where $f(x,y,t) = 1+Ce^{kx+ky+\frac{k^3}{2}t}$ and $C$ is an arbitrary constant, one can algebraically solve for $R$ to find a bilinear form (one finds $R=2$).  This method results in the soliton solution
\begin{equation} q(x,y,t) = u_1(x,y,t) = u_2(x,y,t) =
\frac{2\,C\,k^2e^{k\,(2\,x + 2\,y + k^2t)/2}}
{\left(1 + e^{k(2\,x + 2\,y + k^2t)/2}\right)^2} . \label{NVHirota1Sol}
\end{equation}

Choosing 
$$f(x,y,t) = 1+e^{\theta_1}+e^{\theta_2}+a_{12}e^{\theta_1+\theta_2},$$
where $\theta_i=k_ix+k_iy+\frac{1}{2}k_i^3t$, $i=1,2$, in the logarithmic transformations above results in the  two-soliton solution with $a_{12}$ given in terms of $k_1$ and $k_2$ by $a_{12} = (k_1-k_2)^2/(k_1+k_2)^2$
\begin{align}
q(x,y,t) & = \frac{2\left( k_1^2e^{\theta_1} + k_2^2e^{\theta_2} + (k_1 - k_2)^2e^{\theta_1 + \theta_2}\right)}{1 + e^{\theta_1} + e^{\theta_2} + \frac{(k_1 - k_2)^2}{(k_1 + k_2)^2}e^{\theta_1 + \theta_2}}  \notag \\
&  \hspace{1cm}  \hspace{1cm} - \frac{2\left( k_1e^{\theta_1} + k_2e^{\theta_2} + \frac{(k_1 - k_2)^2}{k_1 + k_2}e^{\theta_1 + \theta_2}\right)^2}{\left(1 + e^{\theta_1} + e^{\theta_2} + \frac{(k_1 - k_2)^2}{(k_1 + k_2)^2}e^{\theta_1 + \theta_2}\right)^2}. \label{eq:2SolitonNV}
\end{align}
The evolution of the two-soliton solution is plotted in Figure \ref{fig:Hirota}.  Further details and a three-soliton solution are found in \cite{RyansThesis}.

The above Hirota solutions are planar solutions to the NV equations and thus are connected to solutions of
KdV, see Remark~\ref{rem:NVtoKdV}.

\begin{figure}[!ht]
\centering
\begin{tabular}{cc}
\includegraphics[width=0.5\linewidth,height=0.32\textheight]{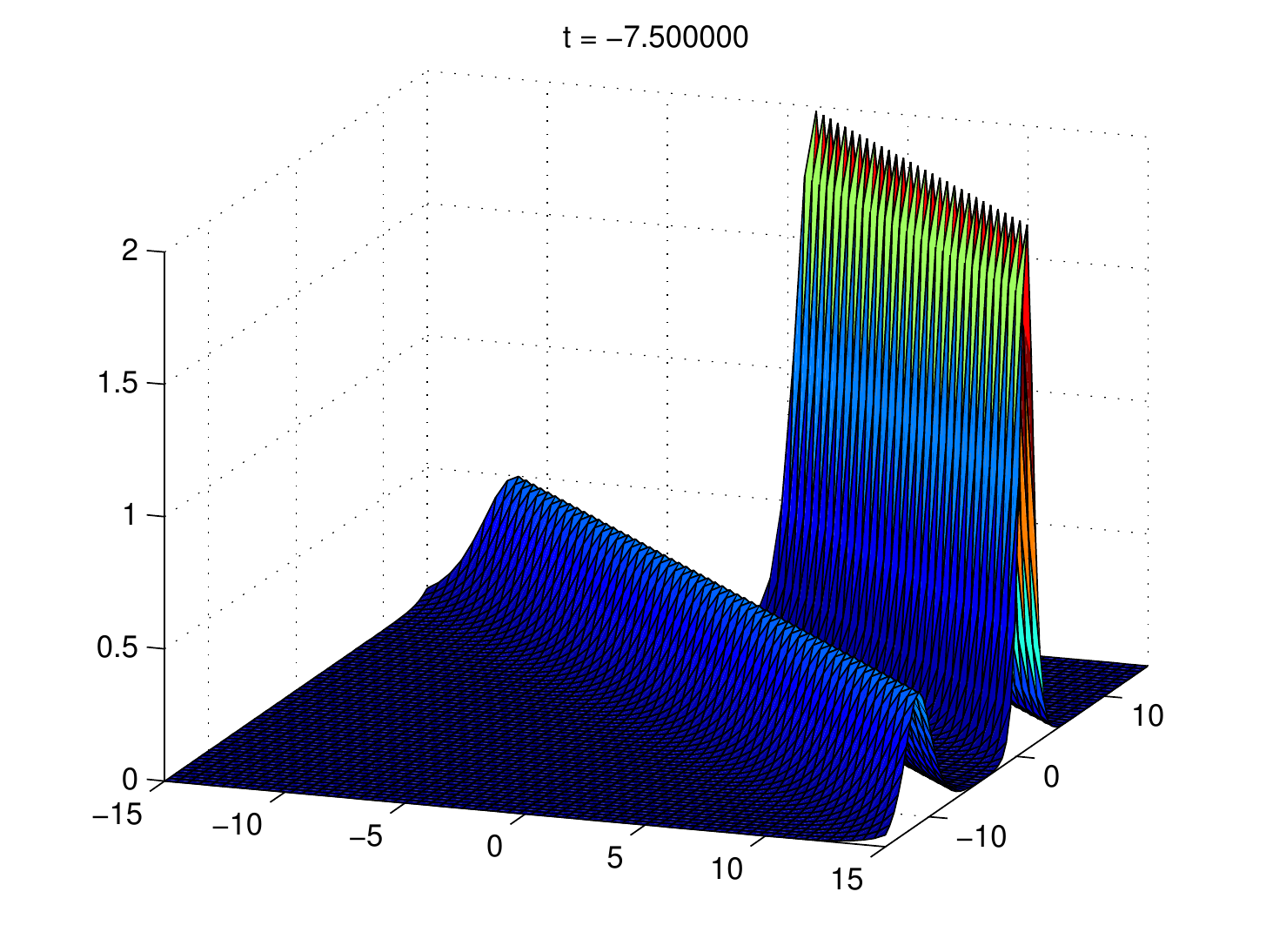}&
\includegraphics[width=0.5\linewidth,height=0.32\textheight]{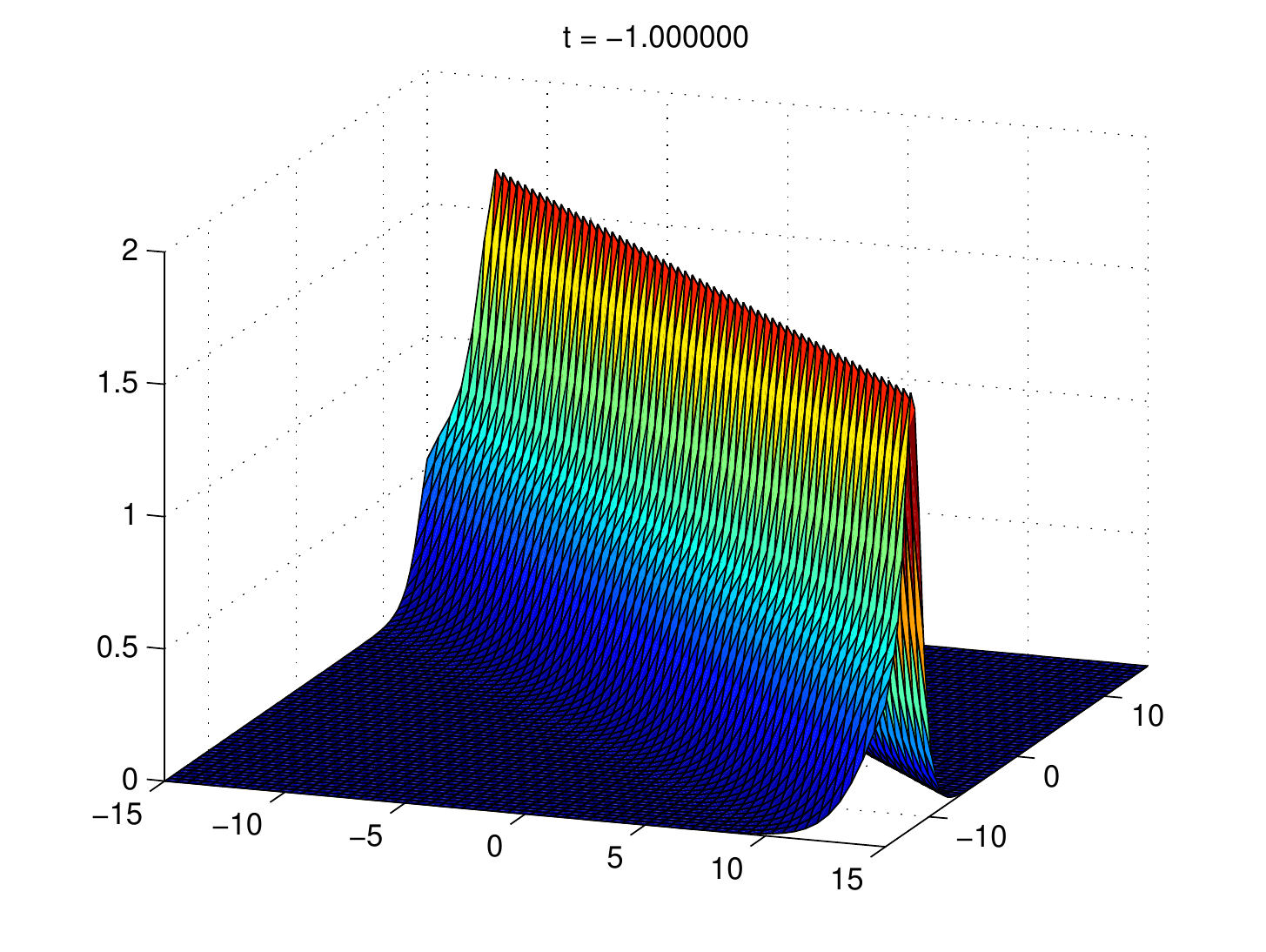}\\
\includegraphics[width=0.5\linewidth,height=0.32\textheight]{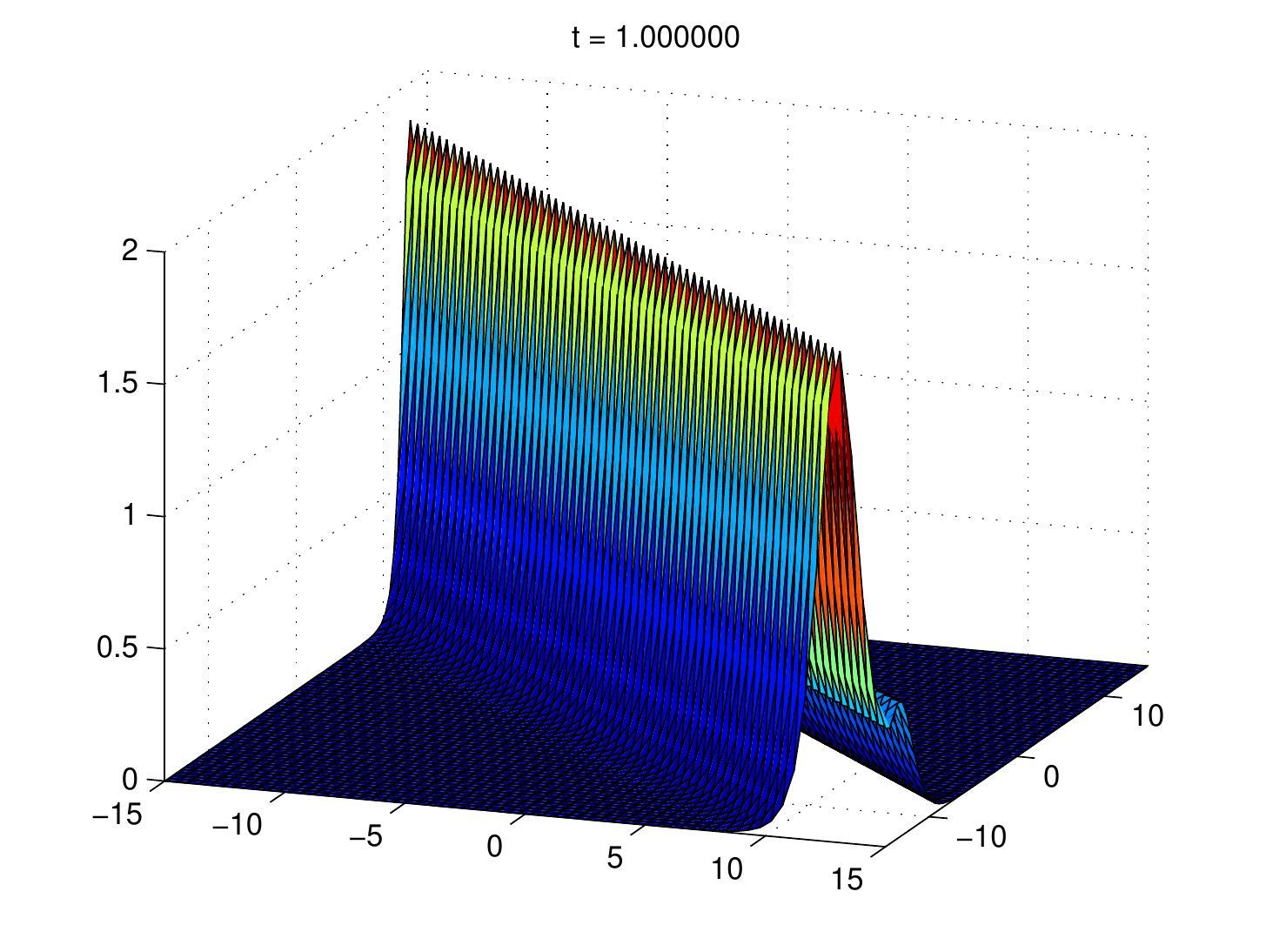}&
\includegraphics[width=0.5\linewidth,height=0.32\textheight]{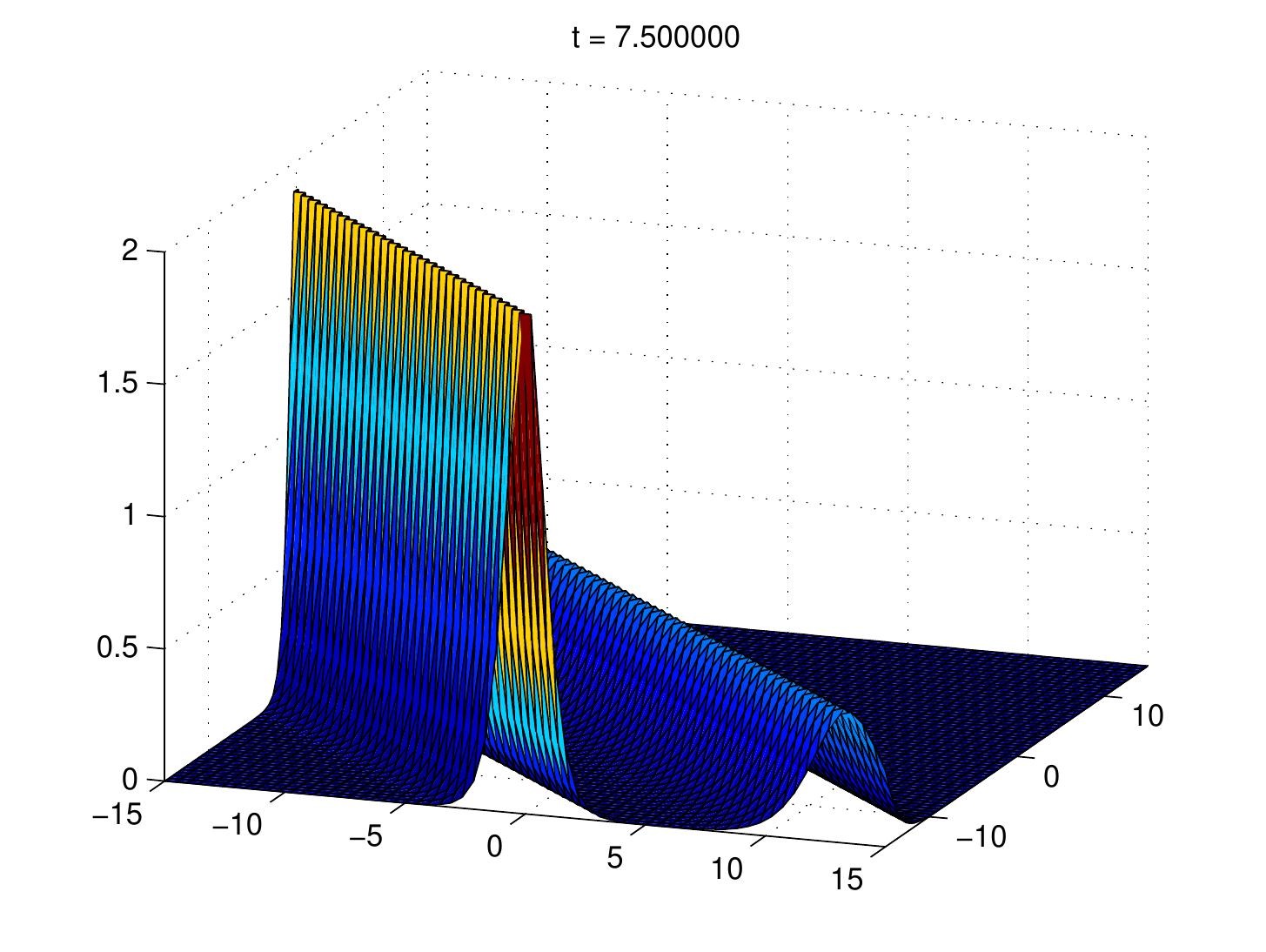}
\end{tabular}
\caption[Time snapshots of the evolution of a 2-soliton solution derived Hirota's bilinear method.]{\centering Time snapshots of the evolution of a 2-soliton solution derived Hirota's bilinear method.}
\label{fig:Hirota}
\end{figure}

\subsubsection{Extended Mapping Approach}

The extended mapping approach (EMA) was presented formally by Zheng \cite{Zheng:2005} and extends results by Lou and Ni \cite{Lou:1989}.  The method is designed to find mappings between nonlinear PDE's.  In this approach, $q, v, w$ are expanded in terms of a function $\phi_i$ that satisfies the Riccati equation
$$\frac{d\phi}{dR} = \ell_0 +\phi^2,$$
where $R=R(x,y,t)$.  Thus, $q(x,y,t) = \sum_{i=0}^n a_i\phi^i$, $v(x,y,t) = \sum_{i=0}^m b_i\phi^i$, and $w(x,y,t) = \sum_{i=0}^k c_i\phi^i$, where the values of $n, m$ and $k$ are determined by balancing the highest order derivative terms with the nonlinear terms of the PDE.  The method is described nicely in \cite{Singareva:2011}.  The balancing method results in $n=m=k=2$.  Substituting these expansions into the NV equation and equating coefficients of the resulting polynomial in $\phi$ results in a system of thirteen PDE's from which we need to solve for the coefficients $a_i(x,y,t), b_i(x,y,t)$, and $c_i(x,y,t)$, $i=1,2$.  Using a separation technique for $R$, namely, $R(x,y,t) = p(x,t)+q(y,t)$ results in $sech^2$ solutions, static (time-independent) solutions, and breather-type solutions of the NV equation.  Further details, including the choices of $\phi$ are found in \cite{RyansThesis}.
A time-independent solution is given by
\begin{eqnarray*}
q(x,y,t) &=&
\frac{-1728\,y^6+(-96+1728\,C)y^4+(-40+288\,C)\,y^2-36\,C+5}
   {432\,y^4-36\,y^2}-\\
&&   -4\,\tanh(x+y^2) + (2+8\,y)\,\tanh^2(x+y^2)\\
v(x,y,t) & =& \frac{144\,y^4+(-12+432\,C)y^2-36\,C+5}{36\,y^2} \\
&&+ 4\,\tanh(x+y^2) + (2-8\,y)\tanh^2(x+y^2)\\
w(x,y,t) &=& 8\,y - 8\,y\,\tanh^2(x+y^2),
\end{eqnarray*}
where $C$ is an arbitrary constant.  See Figure \ref{fig:StaticEMA} for a plot of the solution $q$ with $C=1$. 

\begin{figure}[!ht]
	\centering
		\includegraphics[width=0.5\linewidth]{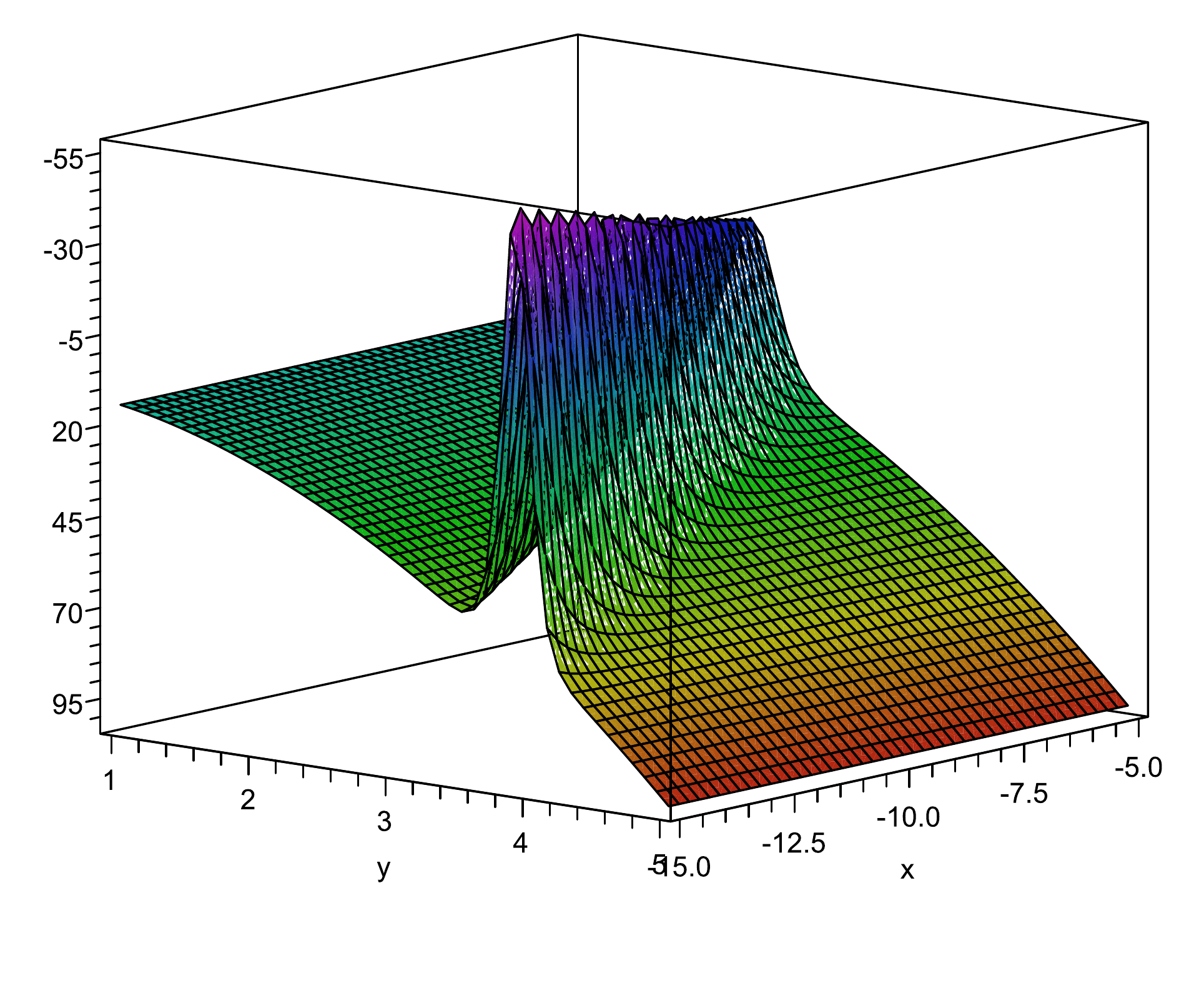}
		\caption{A static solution to the NV equation}
	\label{fig:StaticEMA}
\end{figure}

Breather solutions are solutions with a type of periodic back--and--forth motion in time.  One particular breather solution from \cite{RyansThesis} is 
\begin{eqnarray*}
q(x,y,t) &=&
  \frac{-1728\,y^6+(-96+1728\,C)\,y^4+(-40+288\,C)\,y^2-36\,C+5}
  {432\,y^4-36\,y^2}-\\
&& -4\,\tanh(1+x+y^2+4\,\cos t) +
  (2+8\,y^2)\,\tanh^2(1+x+y^2+4\,\cos t)  \\
 v(x,y,t) &=&
  \frac{-192\,\sin(t)\,y^2 + 144\,y^4+(-12+432\,C)\,y^2-36\,C+5}{36\,y^2}+\\
  &&+ 4\,\tanh(1+x+y^2+4\,\cos t) +
  (2-8\,y^2)\tanh^2(1+x+y^2+4\,\cos t)\\
w(x,y,t) &=& 8\,y - 8\,y\,\tanh^2(1+x+y^2+4\,\cos t).
\end{eqnarray*}
Several time snapshots are shown in Figure~\eqref{fig:BreathersEMA}.
For multisoliton solutions the reader is referred to~\cite{RyansThesis}.

\begin{figure}[!ht]
\centering
\begin{tabular}{cc}
\includegraphics[width=0.5\linewidth,height=0.32\textheight]{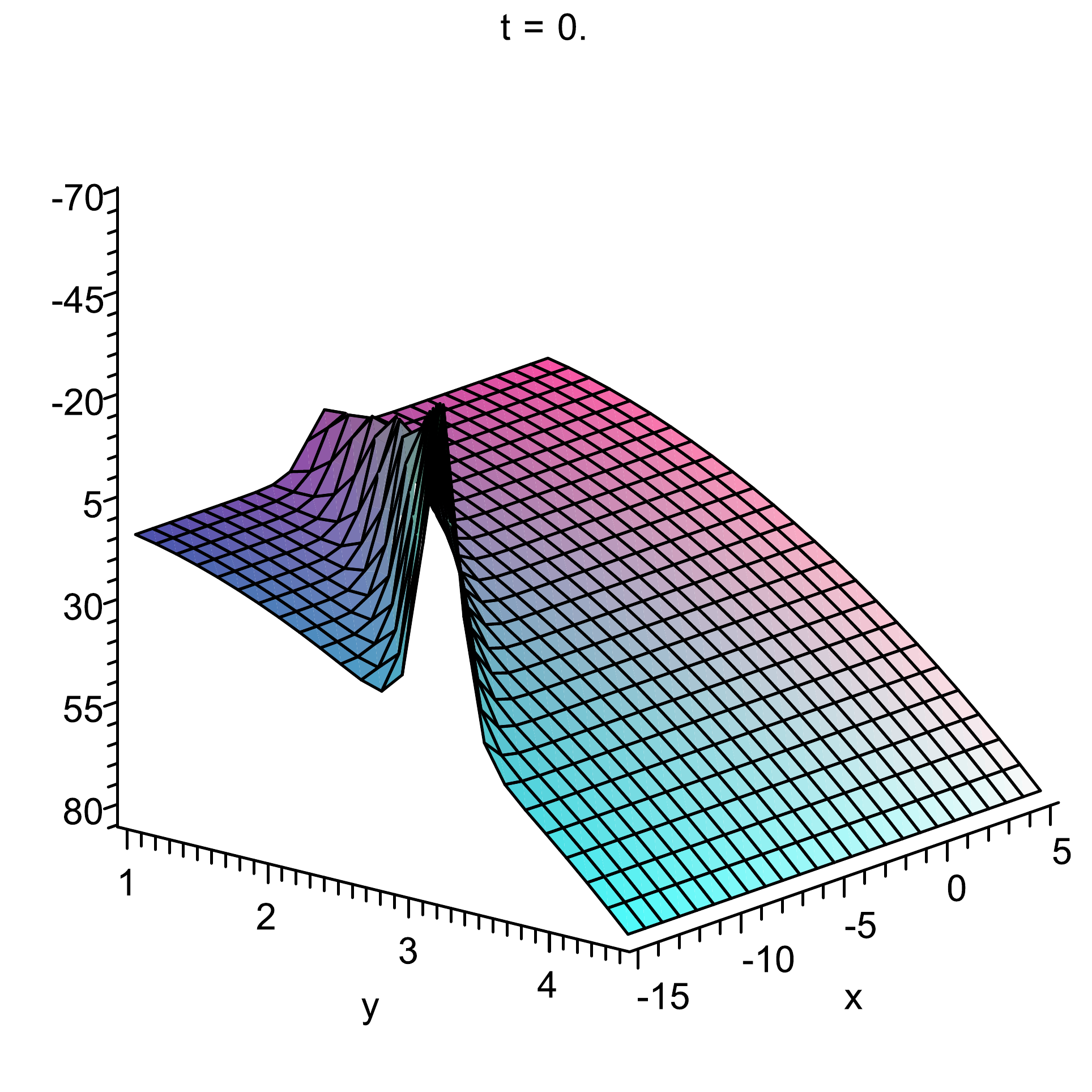}&
\includegraphics[width=0.5\linewidth,height=0.32\textheight]{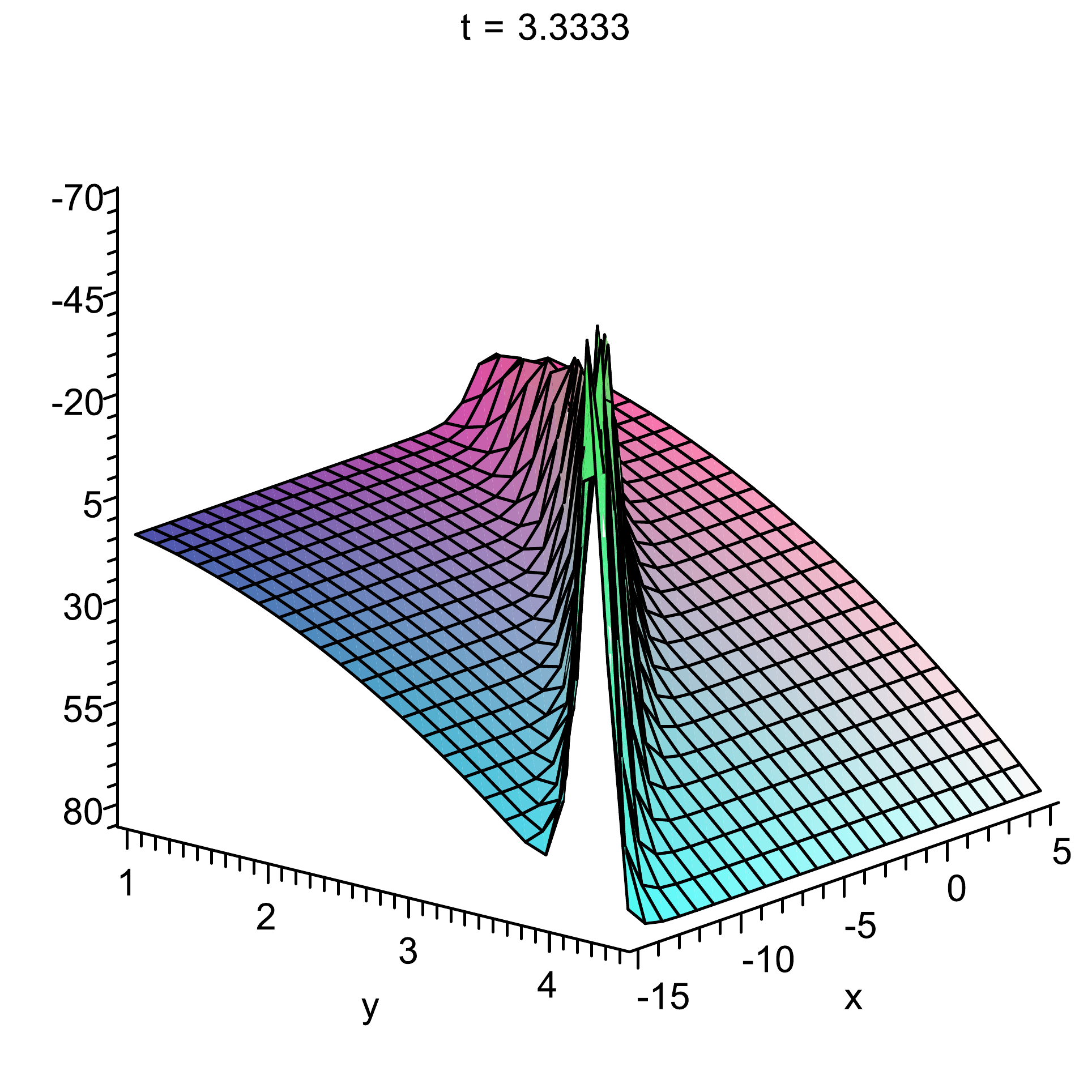}\\
\includegraphics[width=0.5\linewidth,height=0.32\textheight]{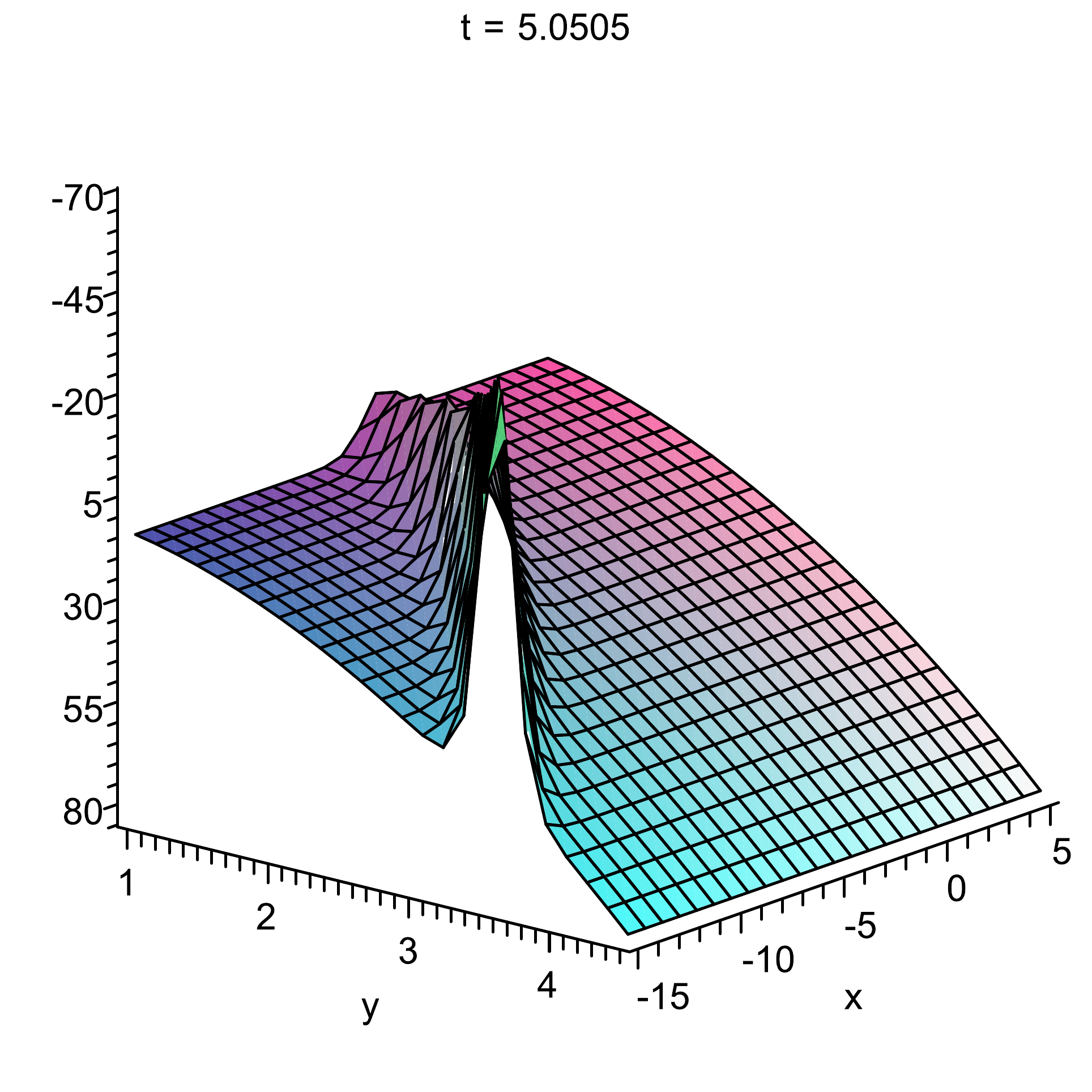}&
\includegraphics[width=0.5\linewidth,height=0.32\textheight]{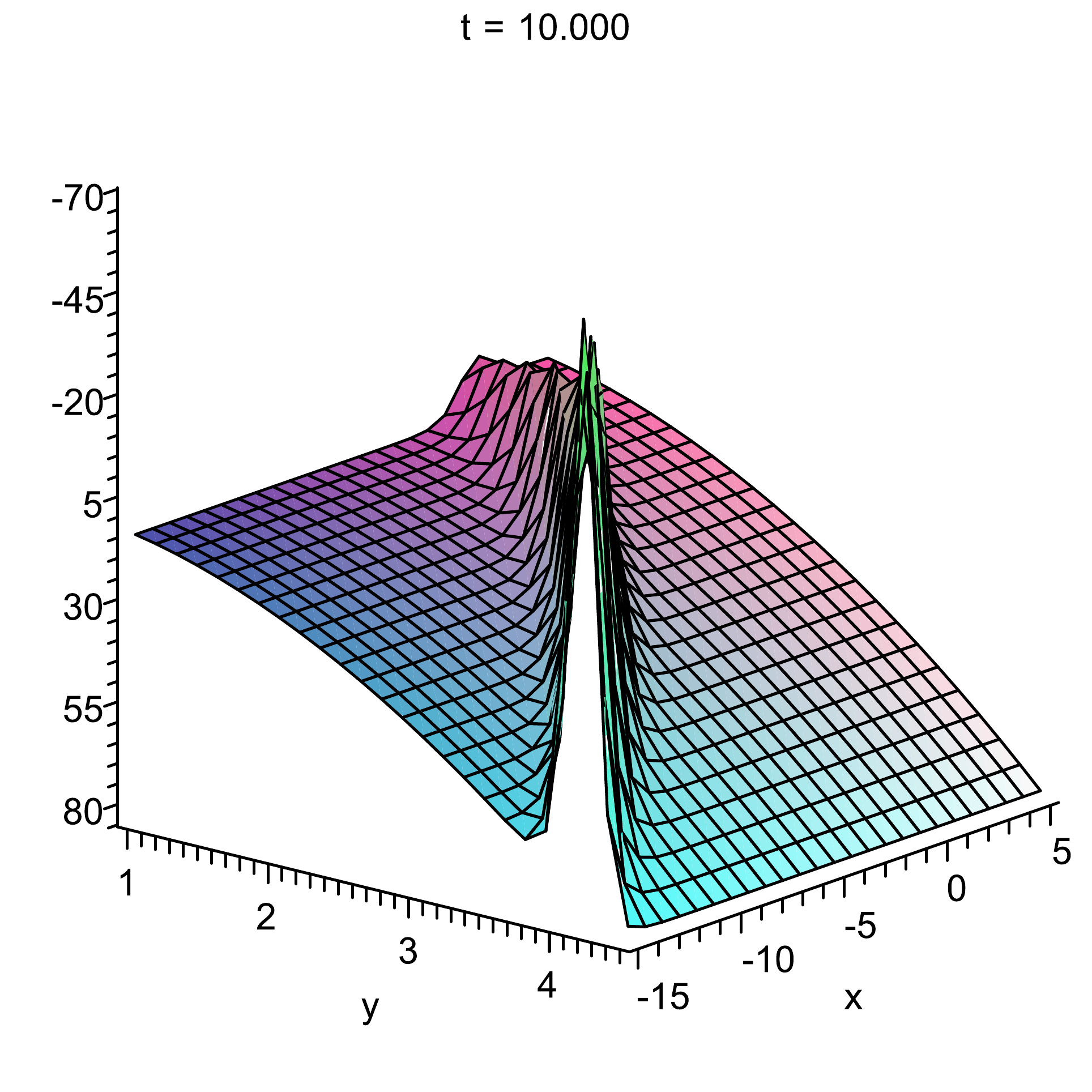}
\end{tabular}
\caption[Time snapshots of a breather solution derived using the EMA.]{\centering Time snapshots of a breather solution derived using the EMA.}
\label{fig:BreathersEMA}
\end{figure}

For multisoliton solutions the reader is referred to \cite{RyansThesis}

\clearpage

\subsection{Scattering transform of the ring soliton at time
  zero} \label{subsec:tk_ring_sol} 

In this section we compute numerically the scattering transform of the
KdV ring soliton discussed in example \ref{ex:3} and illustrated in
Figure \ref{fig:EvolutionZeroEnergy}. Since the initial potential is
supercritical, we expect the scattering transform to have a
singularity.
Since the intial potential is real-valued and rotationally symmetric
in the $z$-plane, also the scattering transform is real-valued and
rotationally symmetric in the $k$-plane. See \cite[Appendix
A]{MPS:2013} for a proof. Therefore it is enough to compute
$\mathbf{t}(k)$ only for real and positive $k$.

We use both the LS and the DN methods described in Section
\ref{sec:scatcomp} and compare the results to verify accuracy. The
values in the range $0.1\leq |k|\leq 4$ are reliable as the results of
both NV and LS methods closely agree there. However, the DN method
does not give reliable results for $|k|>4$.

To assure accuracy for $|k|>4$, we compare the results of the LS
method with two different grids in the $z$-domain. The coarser grid
has $4096\times 4096$ points, and the finer grid has $8192\times 8192$
points. The coarser grid is not a subset of the finer grid. We remark
that both of these grids are significantly finer than those we
typically use for computing scattering transforms for
conductivity-type potentials. Due to high memory requirements, we used a liquid-cooled HP Z800 Workstation with 192 GB of memory. Even with that powerful machine, the evaluation of one point value using the finer grid takes more than 11 hours. The LS method gives closely matching results in the regions $3\leq
|k|\leq 5$ and $|k|>9$.

Figure \ref{Fig:KdVscat} shows the profile of the scattering transform. In the interval $5<|k|<9$ the numerical computation does not converge, resulting either in inaccurate evaluation of the point values of the scattering transform or in complete failure of the algorithm due to using up all the memory.
We suspect that the observed numerical divergence arises
from the existence of at least one exceptional circle in the interval $5<|k|<9$.

\begin{figure}
\begin{picture}(300,120)
\put(-20,-5){\includegraphics[width=12.5cm]{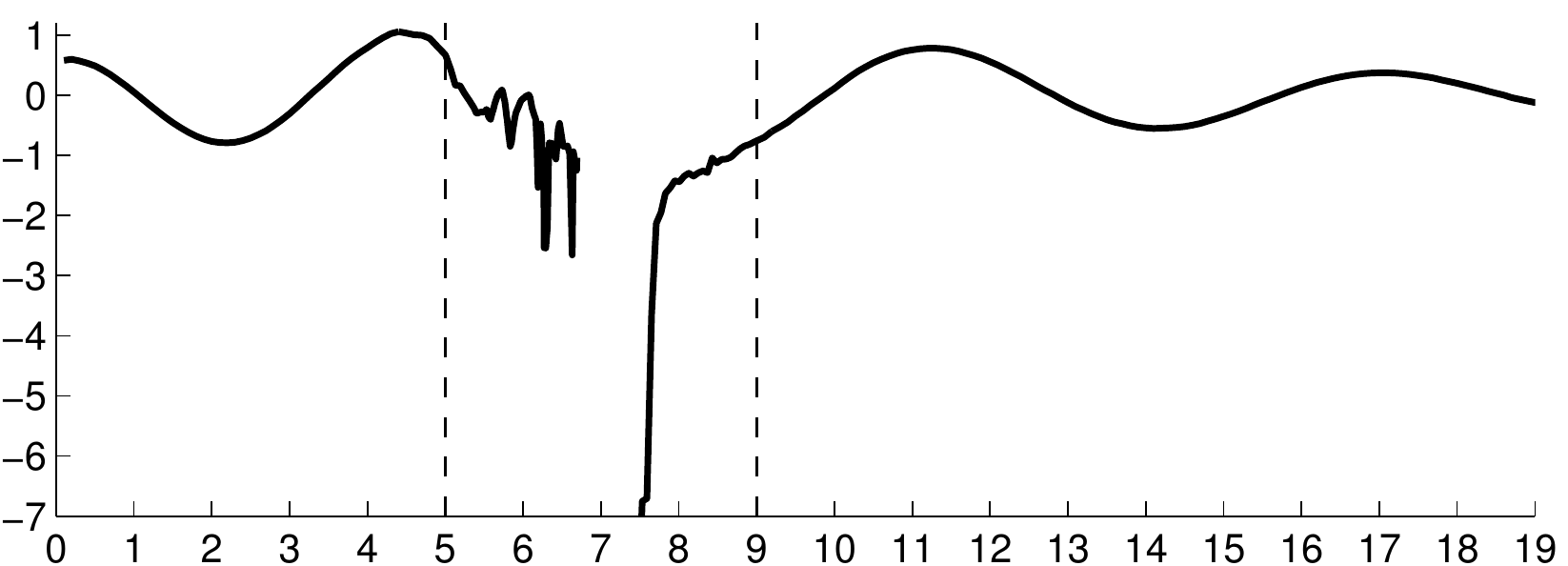}}
\end{picture}
\caption{\label{Fig:KdVscat} Profile of the scattering transform
$\mathbf{t}(k)$ of the KdV ring soliton for $k$ ranging in the
positive real axis. The computation was unstable in the region
$5<|k|<9$, suggesting the presence of an exceptional circle.}
\end{figure}

\clearpage

\section{Zero-energy exceptional points}\label{sec:zero_excep}

\noindent
The inverse scattering method for the solution of the Novikov-Veselov
equation is based on the complex geometrical optics (CGO) solutions
$\psi$ of the Schr\"odinger equation (\ref{eq:CGO-1}). The function
$\psi(z,k)$ is  asymptotically close to the exponential function
$e^{ikz}$ in the sense of formula (\ref{asy_cond_psi}); the point is
that $\psi$ can be used to define a nonlinear Fourier transform
$\mathbf{t}(k)$ specially designed for linearizing the NV
equation. See diagram (\ref{diagram}) above. 

However, there is a possible difficulty in using $\psi$ and
$\mathbf{t}$. Even in the case of a smooth and compactly supported
potential $q\in C_0^\infty$, there may exist complex numbers $k\not=0$
for which equation (\ref{eq:CGO-1}) does not have a unique solution
satisfying the asymptotic condition (\ref{asy_cond_psi}). Such $k$ are
called {\em exceptional points} of $q$. It is shown in \cite{MPS:2013}
that that exceptional points of rotationally symmetric potentials come
in circles centered at the origin and that the scattering transform
has a strong singularity at the circles. The singularity prevents any
currently understood use of the inverse nonlinear Fourier transform in
the diagram (\ref{diagram}). It seems safe to assume that the
situation becomes only worse for more general potentials. 

What is the connection between exceptional points and dynamics of
solutions of the Novikov-Veselov equation? For example, does the
absence of exceptional points in the initial data ensure smooth NV
evolution? Do exceptional points perhaps correspond to lumps or
solitons or finite-time blow-ups? Such a conjecture was presented
already in \cite[page 27]{BLMP:1987}, but the question is still open. 

This section is devoted to a computational experiment illustrating
exceptional points of a parametric family of rotationally symmetric
potentials. The example clarifies the relationship between exceptional
points and the trichotomy supercritical/critical/subcritical presented
in Definition \ref{def:tri}. 

Take a radial $C^2_0$ function $w(z)=w(|z|)$ as shown in Figure
\ref{fig:testfunw}. A detailed definition of $w$ is given in
\cite[Section 5.1]{MP:2013}. Define a family of potentials by
$q_\lambda=\lambda w$, parameterized by $\lambda\in\mathbb{R}$. Now
the case $\lambda=0$ gives $q_0\equiv 0$, which is a critical
potential since it arises as $q_0=\sigma^{-1}\Delta\sigma$ with the
positive function $\sigma\equiv 1$. From  Murata \cite{Murata:1984} we
see that $\lambda<0$ gives a supercritical potential and $\lambda>0$
gives a subritical potential. See \cite[Appendix B]{MPS:2013} for
details. 

We use the DN method described in Section \ref{sec:scatcomp} to
compute the scattering transforms of the potentials $q_\lambda$ for
the parameter $\lambda$ ranging in the interval $[-25,5]$. Since each
potential $q_\lambda(z)$ is real-valued and rotationally symmetric in
the $z$-plane, also the scattering transform is real-valued and
rotationally symmetric in the $k$-plane. See \cite[Appendix
A]{MPS:2013} for details. Therefore it is enough to compute
$\mathbf{t}(k)$ only for $k$ ranging along the positive real axis. 
We show a selection of the results of the computation as function plots in Figure 
\ref{fig:exceptionalcircles}, and all the results in  Figure \ref{fig:scat1A} as a 
two-dimensional grayscale image.

It is known that critical potentials do not have nonzero exceptional
points; see \cite{MPS:2013,Nachman:1996}. Thus there are no
singularities in Figure \ref{fig:scat1A} for $\lambda=0$ (actually in
this simple example we have $\mathcal{T}q_0\equiv 0$). Furthermore, a
Neumann series argument shows that for a fixed $\lambda$ there exists
such a positive constant $K=K(\lambda)$ that there are no exceptional
points for $q_\lambda$ satisfying $|k|>K$. See the analysis in
\cite[above formula (1.12)]{Nachman:1996}.  

According to \cite{Music:2013}, subcritical potentials do not have
nonzero exceptional points. Thus there are no singularities in Figure
\ref{fig:scat1A} for parameter values $\lambda\geq 0$. (We remark that
the seemingly exceptional curves in the upper right corner of Figures
3 and 9 in \cite{MPS:2013} are due to deteriorating numerical accuracy
for large positive values of $\lambda$. Those figures are trustworthy
only for $\lambda$ close to zero.) 

For negative $\lambda$ close to zero it is known from \cite{MPS:2013}
that there is exactly one circle of exceptional points. The asymptotic
form of that radius as a function of $\lambda$ is calculated
explicitely in \cite{MPS:2013}. 

For $\lambda<\!<0$ there is no precise understanding of exceptional
points as of now; numerical evidence such as Figure \ref{fig:scat1A}
suggests that there may be several exceptional circles. Also,
something curious seem to happen around $\lambda\approx -8$ and
$\lambda\approx -20.5$; at present there is no explanation available. 

See \cite{MPS:2013} for more zero-energy examples and \cite{Siltanen:2013} for analogous evidence of exceptional points at positive energies.

\begin{figure}
\begin{picture}(300,180)
\put(-50,-55){\includegraphics[width=14cm]{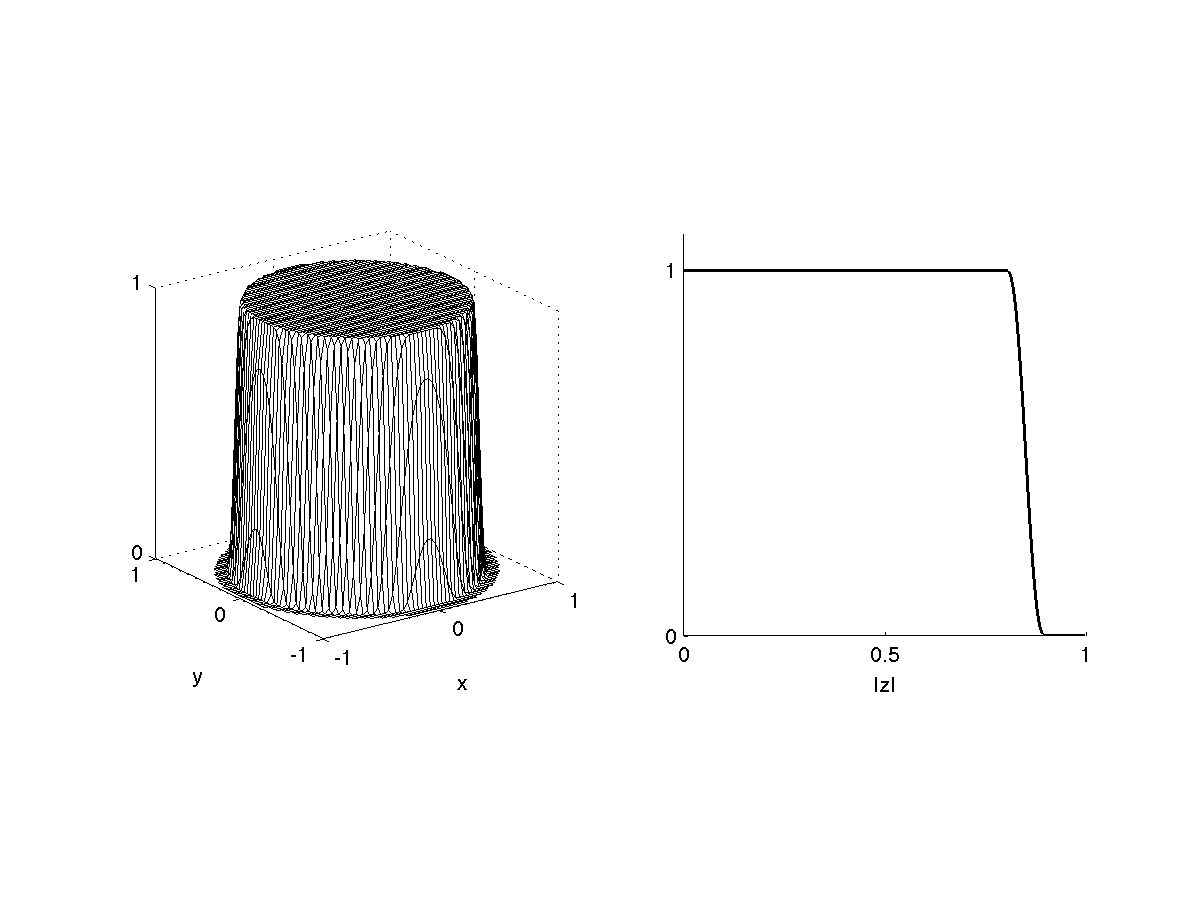}}
\end{picture}
\caption{\label{fig:testfunw}Left: rotationally symmetric test function $w(z)$. Right: the profile $w(|z|)$.}
\end{figure}

\begin{figure}
\begin{picture}(300,500)
\put(30,10){\includegraphics[width=9cm]{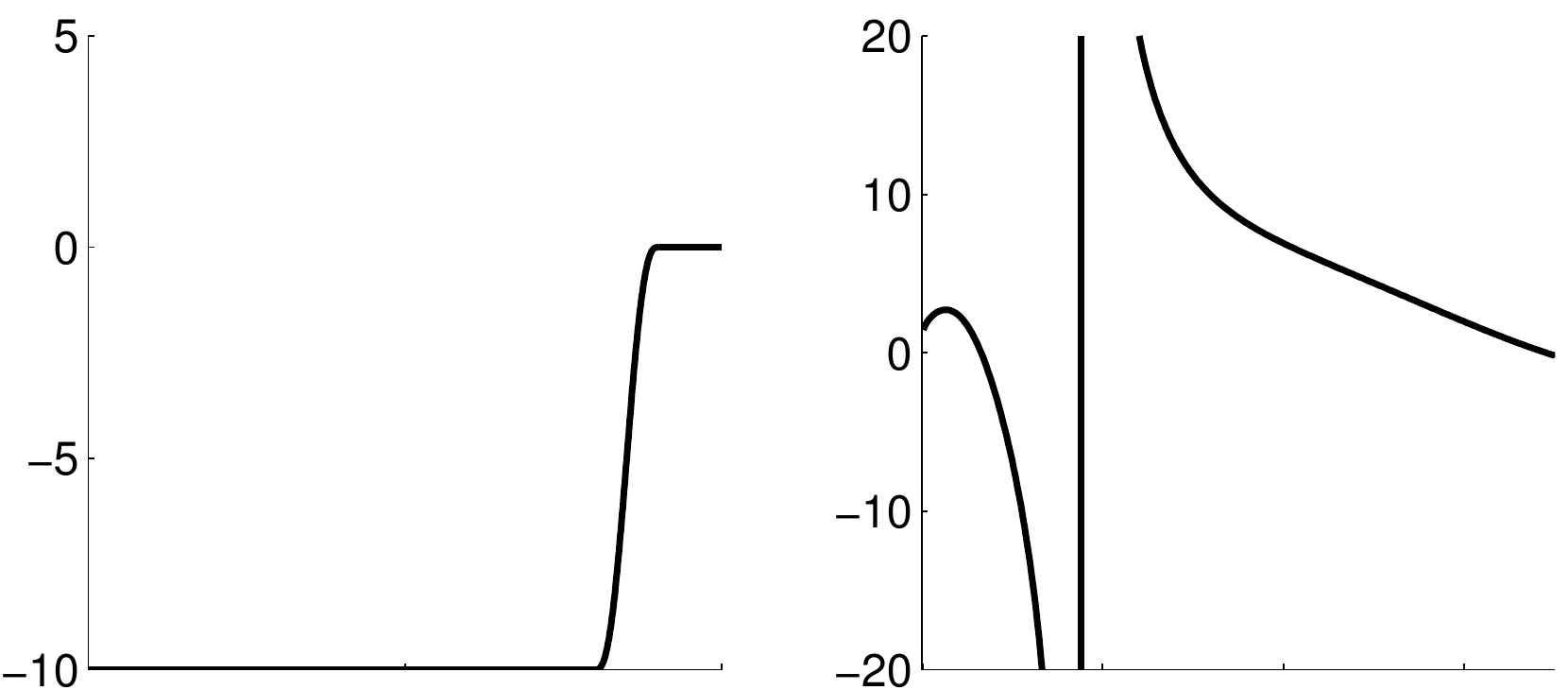}}
\put(30,130){\includegraphics[width=9cm]{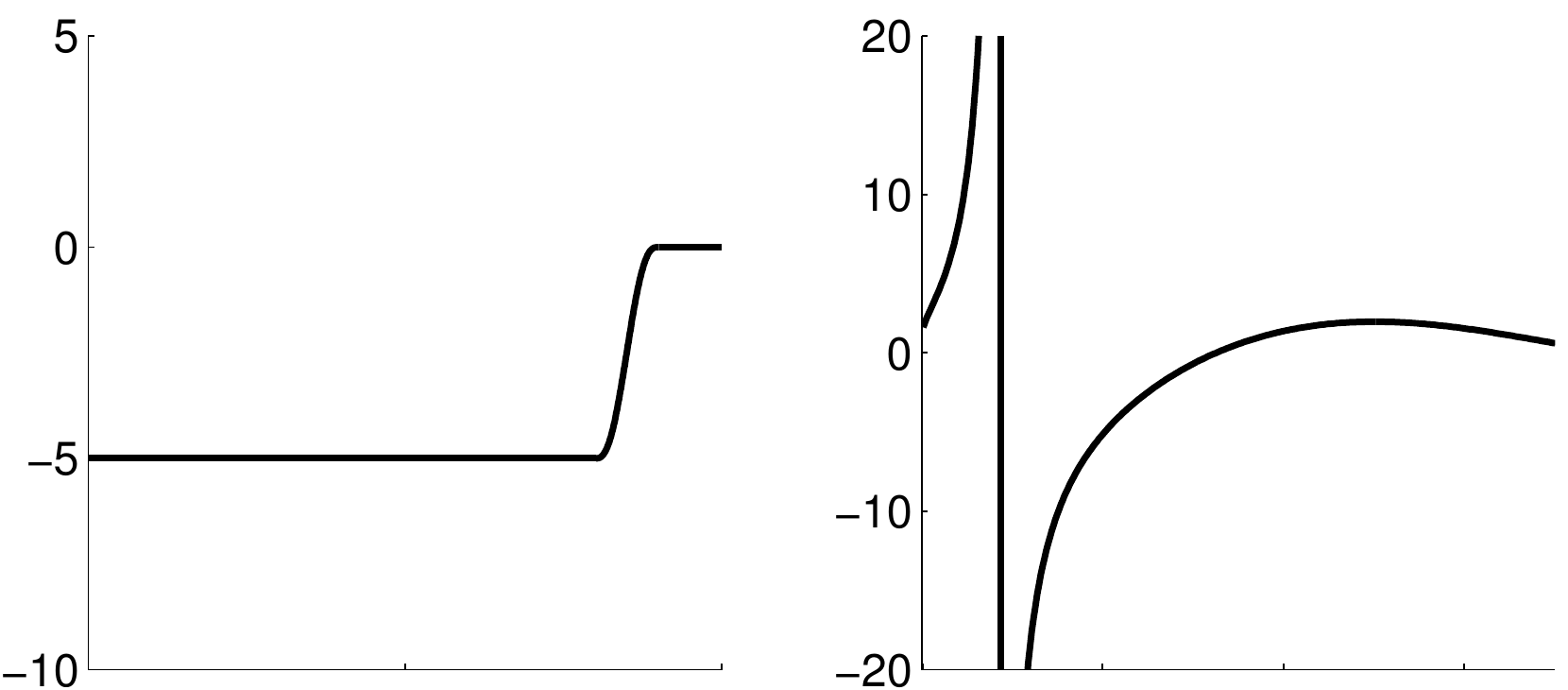}}
\put(30,250){\includegraphics[width=9cm]{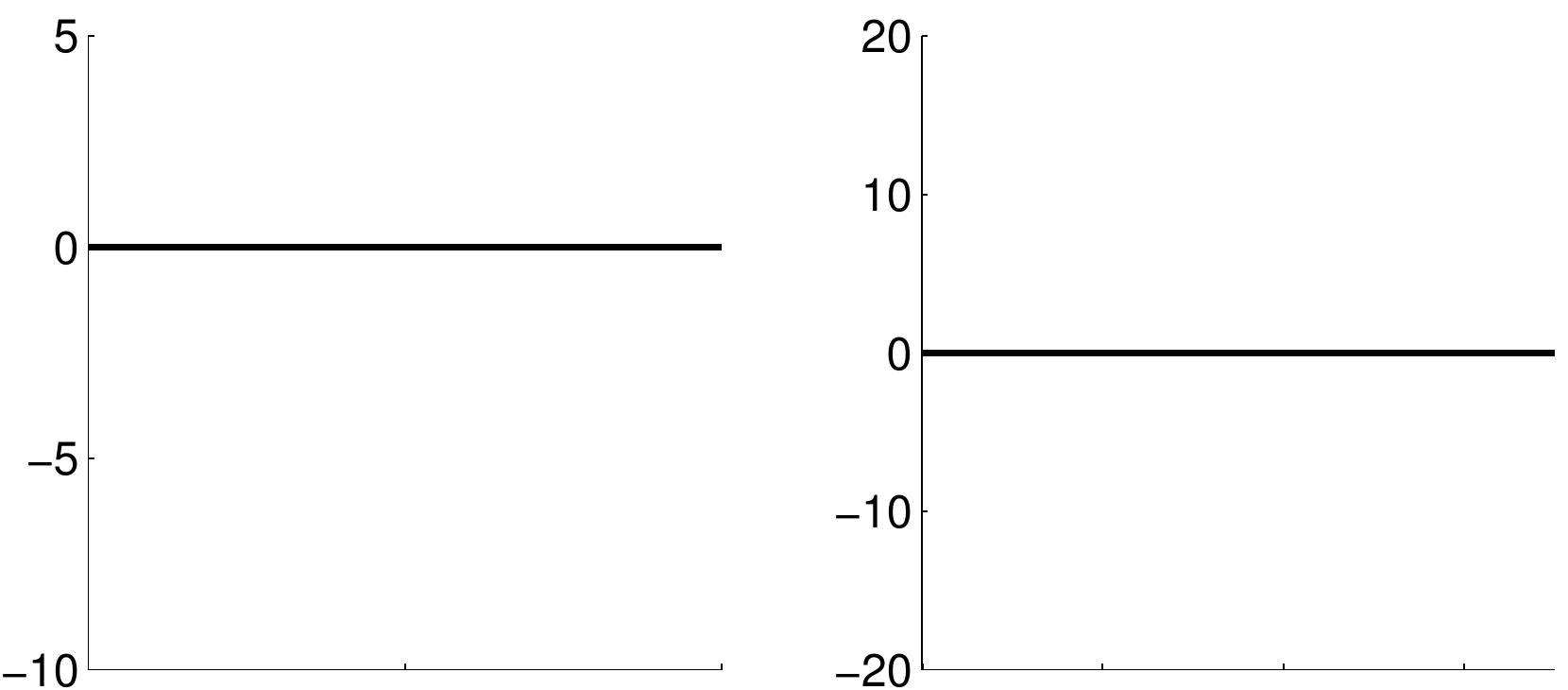}}
\put(30,370){\includegraphics[width=9cm]{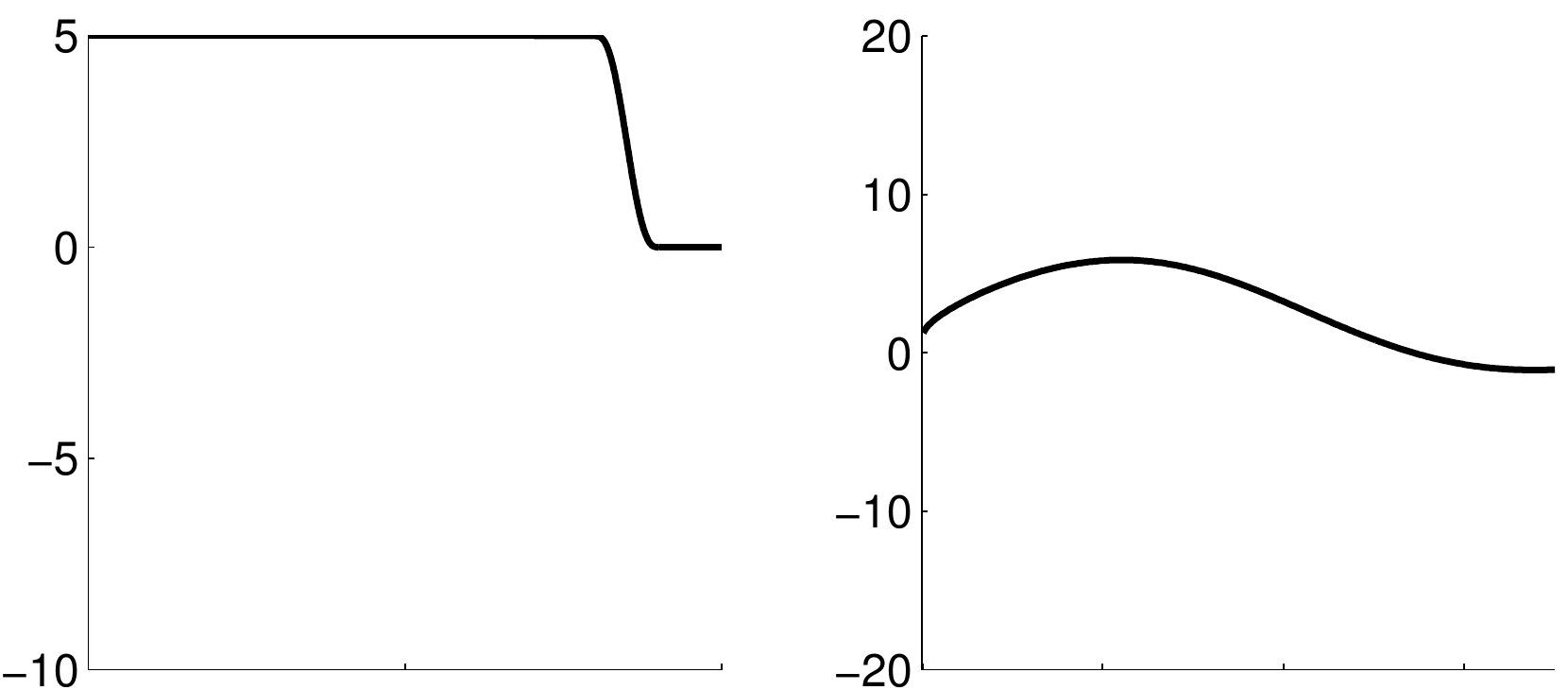}}
\put(40,490){Profile of potential}
\put(170,490){Profile of scattering transform}
\put(-10,430){$\lambda=5$}
\put(-10,310){$\lambda=0$}
\put(-10,190){$\lambda=-5$}
\put(-10,70){$\lambda=-10$}
\end{picture}
\caption{\label{fig:exceptionalcircles}Left column: profiles of
  rotationally symmetric potentials $q_\lambda(|z|)$ resulting from
  different values of $\lambda$. Right column: profiles of
  corresponding scattering transforms $t_\lambda(|k|)$. Note that
  negative values of $\lambda$ lead to exceptional circles where the
  scattering transform is singular. See also Figure \ref{fig:scat1A}
  which shows scattering transform profiles corresponding to more
  values of $\lambda$.}
\end{figure}

\begin{figure}
\begin{picture}(400,325)
 \put(0,20){\includegraphics[width=14cm]{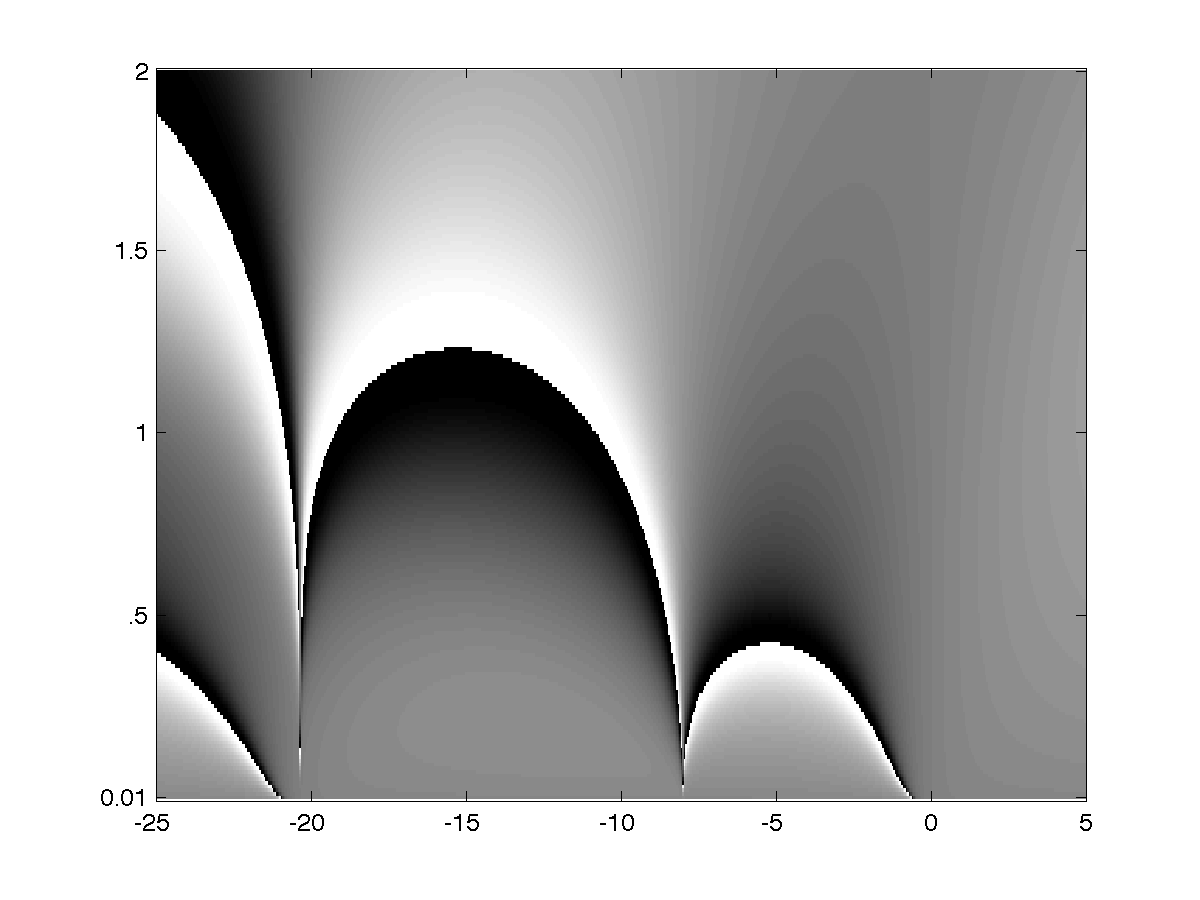}}
\put(200,0){\Large $\lambda$}
\put(-15,200){\Large $|k|$}
\end{picture}
\caption{\label{fig:scat1A}Profiles of rotationally symmetric
  scattering transforms. Lighter shade indicates larger values and
  darker shade smaller values. The horizontal axis is the parameter
  $\lambda$ in the definition $q_\lambda(z) = \lambda w(z)$ of the
  potential. The vertical axis is $|k|$. There are curves along which
  a singular jump ``from $-\infty$ to $+\infty$'' appears, indicated
  by an abrupt change from black to white. The $k$ values at those
  curves are exceptional points. Note that the profiles corresponding
  to $\lambda=-10,-5,0,5$ are shown also in Figure
  \ref{fig:exceptionalcircles}.}
\end{figure}

\clearpage

\section{Open Problems}
\label{sec:open}

\subsection{Applications of the NV Equation}

The stationary NV equation \cite{Ferapontov} and the modified NV
equation \cite{Taimanov} have applications in differential geometry.
Although the NV equation \eqref{eq:NV} is not known to be a
mathematical model for any physical dynamical system, there has been
some research in this direction for the dispersionless Novikov-Veselov
(dNV) equation  
\begin{eqnarray}\label{dNV}
q_\xi &=& (u q)_z + (\overline{u}q)_{\bar{z}} \\
u_{\bar{z}} &=& -3 q_z. \nonumber
\end{eqnarray}
 Equation \eqref{dNV} was derived in \cite{KonoMoro2004} as the
 geometrical optics limit of Maxwell's equations in an anisotropic
 medium.  The model governs the propagation of monochromatic
 electromagnetic (EM) waves of high frequency $\omega$.  In
 particular, they consider nonlinear media with Cole-Cole dependence
 \cite{ColeCole} of the dielectric function and magnetic permeability
 on the frequency.  Assuming slow variation of all quantities along
 the $z$ axis, it is shown that Maxwell's equations reduce to
 \eqref{dNV} where $q=n^2$, the refractive index, and $\xi$ is a
 ``slow'' variable defined by $z=\omega^{\nu}\xi$.  The phase of the
 electric field is governed by \cite{KonoMoro2004}
\begin{eqnarray} \label{S_phase}
S_x^2 + S_y^2 &=& n^2(x,y,\xi) \\
S_{\xi} &=& \phi(x,y,\xi;S_x,S_y) \nonumber
\end{eqnarray}
for a real-valued function $\phi$, and the dNV heirarchy characterizes
both the phase and refractive index.

In \cite{BogKonoMoro2006} hydrodynamic-type reductions of the dNV
equation are presented, but the physical interpretation of these
reductions are left for future work.  Interesting open problems are
whether the inclusion of the dispersion terms in the NV equation
models EM waves in a manner related to those derived for the dNV
equation, and whether either NV or dNV serves as a physical model for
any kind of hydrodynamic phenomenon.

\subsection{Exceptional Sets and Large-Time Behavior for the NV Equation}

Recall that the \emph{exceptional set} of a potential $q_0$ is the set of all those $k \in \mathbb{C}$ for which there is not a unique NCGO solution $\mu(z,k)$. Here we discuss the relationship between: (1) the ``size'' of the exceptional set,'' (2) whether the potential $q_0$ is subcritical, critical, or supercritical (recall this trichotomy, Definition \ref{def:tri} from the introduction) and (3) whether the NV equation with initial data $q_0$ has a global solution. In these remarks, we will usually restriction attention to real-valued potentials $q_0$ belonging to the space $L^p_\rho(\mathbb{R}^2)$ for $\rho>1$ and $p\in(1,2)$ (see the remarks preceding Definition \ref{def:cond.Nachman}); one may think of potentials of order $\mathcal{O}\left(|z|^{-2-\varepsilon}\right)$ as $|z| \rightarrow \infty$. Recall that, in the inverse scattering literature, critical potentials are usually referred to as ``potentials of conductivity type.''

To date, the only rigorous results on the size of exceptional sets for the zero-energy NV equation are due to Nachman \cite{Nachman:1996} and Music \cite{Music:2013}.  As explained above, Nachman showed that a potential is of conductivity type (or, equivalently, a critical potential as defined in the introduction, Definition \ref{def:tri})  if and only if the exceptional set is empty and the scattering transform $\mathbf{t}(k)$ is $\mathcal{O}(|k|^\varepsilon)$ as $|k| \rightarrow 0$ for some $\varepsilon>0$. Music, extending Nachman's ideas and techniques, showed that a subcritical potential with sufficient decay at infinity has an empty exceptional set and characterized the singularity of the potential as $|k| \rightarrow 0$.  Perry \cite{Perry:2012} showed that, if $q_0$ is a sufficiently smooth critical potential, the NV equation with initial data $q_0$ has a solution global in time. There is strong evidence to suggest that a similar result can be proved for the NV equation with subcritical initial data, based on the work of Music \cite{Music:2013}.

Thus, it remains to understand the singularities of the scattering transform for supercritical potentials. Examples due to Grinevich and Novikov \cite{GN:2012}  and Music, Perry and Siltanen \cite{MPS:2013} show that supercritical potentials may have circles of exceptional points. It is not known whether supercritical potentials \emph{must} or \emph{may} have exceptional points, nor is it known how to extend the inverse scattering formalism to potentials with nonempty exceptional sets. The following result due to Brown, Music, and Perry \cite{BMP:2014} gives an initial constraint on the size of exceptional sets for particularly nice potentials.

\begin{theorem} \label{thm:BMP} \cite{BMP:2014} Suppose that $q$ is a real-valued measurable valued function with the property that $|q(z)| \leq C_1 \exp(-C_2|z|)$ for some constants $C_1$ and $C_2$. Then the exceptional set of $q$ consists at most of isolated points together with at most finitely many smooth curves with at most finitely many intersections.
\end{theorem}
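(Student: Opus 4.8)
The plan is to realise the exceptional set as the zero set of a single real-analytic function on the $k$-plane and then to invoke the structure theory for real-analytic varieties in $\mathbb{R}^2$. To this end I would first recast the uniqueness question in Fredholm form. By the discussion around \eqref{eq:mu.int}, for $k\neq 0$ the NCGO problem is equivalent to $\mu = 1 - g_k*(q\mu)$, so $k$ is exceptional precisely when the compact operator $T_k\colon \varphi \mapsto g_k*(q\varphi)$ satisfies $\ker(I+T_k)\neq\{0\}$. To convert non-invertibility of $I+T_k$ into the vanishing of a scalar, I would symmetrise: with $w = |q|^{1/2}$, the operator $\widetilde T_k$ with integral kernel $w(z)\,(\operatorname{sgn}q)(z)\,g_k(z-z')\,w(z')$ is Hilbert--Schmidt on $L^2(\mathbb{C})$ (the logarithmic diagonal singularity of $g_k$ is square-integrable in two dimensions and $w$ decays), and $I+T_k$ is invertible on $L^{\tilde p}$ if and only if $I+\widetilde T_k$ is invertible on $L^2$. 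Hence the exceptional set is exactly the zero set of the regularised Fredholm determinant $d(k) = \det\nolimits_2(I+\widetilde T_k)$, a continuous scalar function on $\mathbb{C}\setminus\{0\}$.

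Second --- and this is the heart of the matter --- I would prove that $d$ is real-analytic on $\mathbb{C}\setminus\{0\}$. The scaling relation $g_k(z)=g_1(kz)$, together with real-analyticity of the Faddeev kernel $g_1$ off the origin, shows that $(k,z,z')\mapsto g_k(z-z')$ is jointly real-analytic away from $k=0$ and the diagonal $z=z'$; in particular it is genuinely a function of both $k$ and $\overline k$, which is exactly what will allow the zero set to contain curves rather than only isolated points. Differentiating $g_1(kz)$ shows that each $\partial_k$ or $\partial_{\overline k}$ applied to the kernel brings down a factor of $z-z'$, so the Hilbert--Schmidt norm of the mixed derivative $\partial_k^{a}\partial_{\overline k}^{b}\widetilde T_k$ is controlled by the weighted moments $\int |z|^{a+b}|q(z)|\,dz$; here the hypothesis $|q(z)|\le C_1 e^{-C_2|z|}$ is decisive, since it absorbs these growing moments with factorial room and yields bounds $\|\partial_k^a\partial_{\overline k}^b \widetilde T_k\|_{HS}\le C\,R^{a+b}(a+b)!$ of the type needed for a convergent power series. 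This is a Paley--Wiener phenomenon: exponential spatial decay is precisely what makes the $\overline k$-dependence real-analytic rather than merely smooth. Standard continuity of $\det\nolimits_2$ on Hilbert--Schmidt operators then transfers real-analyticity from $\widetilde T_k$ to $d$.

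Third I would assemble the geometry. The exceptional set is known to be bounded and closed, and by Nachman's Neumann-series estimate (\cite{Nachman:1996}, above formula (1.12)) the operator $I+\widetilde T_k$ is invertible for $|k|>K$, so $d\not\equiv 0$. Choosing a compact annulus $\{\,\varepsilon\le|k|\le K+1\,\}$ that contains the exceptional set while staying off the origin, I invoke the \L ojasiewicz structure theorem for real-analytic varieties: in dimension two the zero set of a real-analytic function that does not vanish identically is, on a compact set, a finite union of isolated points and real-analytic arcs, and two distinct arcs meet in a finite set. This is exactly the asserted description --- at most isolated points together with finitely many smooth curves having at most finitely many intersections.

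The principal obstacle is the second step combined with the behaviour at $k=0$. Establishing real-analyticity of the determinant with a uniform radius of convergence is where the exponential-decay hypothesis does its essential work, and it is plausibly indispensable: polynomial decay would leave $d$ smooth but possibly not real-analytic, and the \L ojasiewicz dichotomy would fail. Separately, $g_k$ carries a $\log k$ singularity as $k\to0$, so $d$ need not extend analytically across the origin; one must therefore rule out accumulation of exceptional points at $k=0$ --- handled by isolating the rank-one $\log|k|$ contribution to $T_k$ --- before the compact-set structure theorem can be applied on a genuine neighbourhood of the entire exceptional set.
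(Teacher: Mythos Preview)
Your proposal is correct and follows essentially the same route as the paper: cast the exceptional set as the zero set of a regularised Fredholm determinant, prove that determinant is real-analytic in $k$ using the exponential decay hypothesis, and invoke the local structure theory of real-analytic zero sets on the compact region containing the exceptional set.

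There are a few implementation differences worth noting. The paper works directly with $T_k$ as an operator on $L^p$ ($p>2$), appealing to the Gohberg--Goldberg--Krupnik theory of determinants in the Mikhlin--Itskovich algebra rather than symmetrising to a Hilbert--Schmidt operator on $L^2$ as you do; your Birman--Schwinger reduction is more elementary and avoids that machinery, at the mild cost of having to check that invertibility transfers between the $L^{\tilde p}$ and $L^2$ formulations. For the geometric step the paper uses the Weierstrass preparation theorem (so the zero set is locally that of a polynomial) in place of your appeal to the \L ojasiewicz structure theorem; these lead to the same conclusion. The paper also records and uses the fact that the determinant is \emph{real-valued} (a consequence of $q$ being real), which immediately places the zero set inside a single real-analytic hypersurface; you do not invoke this, but your route through $|d|^2$ or through the pair $(\operatorname{Re}d,\operatorname{Im}d)$ works as well. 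Finally, you flag the $k\to 0$ issue explicitly, which the paper's sketch does not; your proposed treatment via the rank-one $\log|k|$ piece of $g_k$ is the natural one.
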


To analyze the exceptional set, the authors define a renormalized determinant whose zero set is exactly the exceptional set. To describe it, 
let $T_k$ is the integral operator 
$$ T_k \psi = \dfrac{1}{4} g_k*(q\psi). $$
The differential equation for $\mu(z,k)$, the NCGO solution, may be rewritten
$\mu=1+T_k \mu$. Hence,  uniqueness of solutions is equivalent to invertibility of $(I-T_k)$, and the exceptional set is exactly the set of points $k$ for which $(I-T_k)$ fails to be invertible. It can be shown that $T_k$ is a compact linear operator from $L^p$ to itself for any $p>2$, and that $T_k$ belongs to the so-called Mikhlin-Itskovich algebra of integral operators on $L^p$.  For this reason we can apply the theory renormalized determinants due to Gohberg, Goldberg, and Krupnik \cite{GGK:2000}
and define
$$ \Delta(k) = {\det}_2(I-T_k) $$
where the determinant ${\det}_2$ is the renormalized determinant. Brown, Music, and Perry show that this determinant is a real-analytic function of $k$ for exponentially decaying potentials. It now follows from the Weierstrass preparation theorem that the zero set of $\Delta(k)$ is locally the zero set of a polynomial. Since the exceptional set is known to be closed and bounded, one can completely analyze the behavior of $\Delta(k)$ near the exceptional set using finitely many such local representations. It can be shown that $\Delta(k)$ is also real-valued, from which it follows that the zero set has the claimed form. 

Theorem \ref{thm:BMP} opens up several areas for further investigation.

First, it would be of considerable interest to determine what additional data is needed to reconstruct a potential from $\mathbf{t}(k)$ when $\mathbf{t}(k)$ has point or line singularities. 

Second, it would be very interesting to know whether singularities are always present for supercritical potentials, or whether, on the other hand, singularities are generically absent.

Third, our understanding of the NV equation and its dynamics would be greatly improved by connecting `spectral' properties of the scattering transform (i.e., the nature of its singularities) to long-term behavior of solutions. The form of the time evolution for $\mathbf{t}(k)$ suggests that the `trichotomy' of subcritical, critical, and supercritical potentials is invariant under the NV flow. It is known that critical initial data give rise to global solutions (see \cite{Perry:2013}), and there is strong evidence that the same is true of subcritical initial data. On the other hand, numerical experiments such as the ring soliton, Example 4.3,  and analytical solutions such as those produced by Taimanov and Tsarev \cite{TT:2008a,TT:2008b,TT:2010a,TT:2010b}  strongly suggest that supercritical initial data lead to solutions of NV that blow up in finite time. It would be very interesting to obtain a rigorous proof that this is the case, and to analyze the nature of the blow-ups by inverse scattering methods.

\appendix{}

\section{Some Useful Analysis}
\label{sec:analysis}

In the direct scattering problem at zero energy, Faddeev's Green's function plays a critical role in elucidating properties of the CGO solutions that define the scattering transform. Recall that the normalized CGO solutions solve the equation 
$$ \overline{\partial}\left(\partial + ik \right) \mu=(1/4)q \mu$$ 
and that Faddeev's Green's function is Green's function for the operator $-4\overline{\partial}\left( \partial + ik \right)$. On the other hand, the solid Cauchy transform is an inverse for the $\overline{\partial}$ operator with range in $L^p$ functions for $p>2$, and hence is a fundamental tool for solving the $\overline{\partial}$ problem that defines the inverse scattering transform. Finally, the Beurling transform is an integral operator which gives a meaning to the operator $\overline{\partial}^{-1} \partial$ that occurs in the definition of the nonlinearity in the NV equation. Here we collect some useful properties of Faddeev's Green's function and the Cauchy and Beurlng transforms, and recall some essential estimates.

\subsection{Faddeev's Green's Function at Zero Energy}
\label{sec:Faddeev}

We recall some key facts about Faddeev's Green's function $g_{k}$. We refer the
reader to Siltanen's thesis \cite{Siltanen:1999} for details and references to
the literature.

Recall that $g_{k}$ is defined by the formula%
\[
g_{k}(z)=\frac{1}{\left(  2\pi\right)  ^{2}}\int e^{i\xi\cdot x}\frac
{1}{\overline{\xi}(\xi+k)}~dm(\xi)
\]
where $z=x_{1}+ix_{2}$, $\xi\cdot x=\xi_{1}x_{1}+\xi_{2}x_{2}$, $\xi=\xi
_{1}+i\xi_{2}$, and $\overline{\xi}=\xi_{1}-i\xi_{2}$. By the Hausdorff-Young
inequality, $g_{k}\in L^{p}$ for any $p>2$. In fact, the estimate%
\[
\left\Vert g_{k}\right\Vert _{p}\leq C_{p}\left\vert k\right\vert ^{-2/p}%
\]
(see Siltanen \cite{Siltanen:1999}, Theorem 3.10) holds for any $k\neq0$. It
is important to note that%
\[
g_{k}(z)=g_{1}(kz).
\]
The following large-$x$ asymptotic expansion of $g_{1}\left(  x\right)  $ is
proved in \cite{Siltanen:1999}, Theorem 3.11.

\begin{lemma}
Let $z=x_{1}+ix_{2}$ with $z\neq0$ and $x_{1}>0$. For any integer $N\geq0,$%
\begin{align}
g_{1}(z)  &  =-\frac{1}{4\pi}\sum_{j=0}^{N}\left[  \frac{j!}{\left(
iz\right)  ^{j+1}}-e^{-2ix_{1}}\frac{j!}{\left(  -i\overline{z}\right)
^{j+1}}\right] \label{eq:g1.exp}\\
&  +E_{N}(z)\nonumber
\end{align}
where%
\begin{equation}
\left\vert E_{N}(z)\right\vert \leq\frac{(N+1)!2^{\left(  N+1\right)  /2}}%
{\pi\left\vert z\right\vert ^{N+2}}. \label{eq:g1.err}%
\end{equation}
Since $g(-x_{1}+ix_{2})=\overline{g_{1}(x_{1}+ix_{2})}$, similar formulas hold
for $x_{1}<0$.
\end{lemma}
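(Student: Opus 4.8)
The plan is to reduce $g_1$ to the exponential integral $E_1(w)=\int_w^\infty t^{-1}e^{-t}\,dt$ in closed form and then read off \eqref{eq:g1.exp} from the classical asymptotic series of $E_1$. The starting point is the operator identity
\[
-4\overline{\partial}(\partial+i)=e^{-iz}(-\Delta)e^{iz},
\]
which holds because $\partial z=1$ and $\overline{\partial}z=0$, so that $e^{iz}$ is holomorphic. Applying it to the defining equation $-4\overline{\partial}(\partial+i)g_1=\delta$ shows that $G_1:=e^{iz}g_1$ is a fundamental solution of $-\Delta$, i.e. $-\Delta G_1=\delta$. I would then exhibit $G_1$ explicitly through the ansatz
\[
G_1(z)=\frac{1}{2\pi}\operatorname{Re} E_1(-iz)=\frac{1}{4\pi}\bigl(E_1(-iz)+E_1(i\overline z)\bigr),
\]
the second equality using $\overline{E_1(-iz)}=E_1(i\overline z)$ off the branch cut, so that $g_1=e^{-iz}G_1=\tfrac{1}{2\pi}e^{-iz}\operatorname{Re}E_1(-iz)$.

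Verifying this ansatz is the conceptual core. Away from the origin $E_1(-iz)$ is holomorphic in $z$ and $E_1(i\overline z)$ is antiholomorphic, hence both are harmonic and $-\Delta G_1=0$ there; the two functions share the branch cut on the negative imaginary axis, and since the jump of $E_1$ across its cut is the purely imaginary $2\pi i$, the real part $G_1$ is continuous across it. Near $z=0$ the logarithmic singularities combine, $E_1(-iz)+E_1(i\overline z)=-2\gamma-\log|z|^2+\mathcal{O}(|z|)$, producing exactly the distributional $\delta$. Finally $g_1=e^{-iz}G_1$ inherits from the decay of $E_1$ the bound $|g_1|\sim|z|^{-1}$, so $g_1\in L^p$ for $p>2$ as required; by uniqueness of the $L^p$ fundamental solution (two such differ by an $L^p$ solution of the homogeneous equation, which must vanish because the operator's characteristic set has measure zero) the ansatz is identified with $g_1$.

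With the closed form in hand, \eqref{eq:g1.exp} follows by inserting the standard expansion $E_1(w)\sim e^{-w}\sum_{j\ge0}(-1)^j j!\,w^{-(j+1)}$ into $g_1=\tfrac{1}{4\pi}e^{-iz}\bigl(E_1(-iz)+E_1(i\overline z)\bigr)$. In the first summand the exponentials cancel and, after simplifying the powers of $-i$, one gets the family $\tfrac{j!}{(iz)^{j+1}}$; in the second summand the product $e^{-iz}e^{-i\overline z}$ leaves the modulation $e_{-1}(z)=e^{-i(z+\overline z)}=e^{-2ix_1}$ multiplying $\tfrac{j!}{(-i\overline z)^{j+1}}$, which is the origin of the oscillatory factor in \eqref{eq:g1.exp}. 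The case $x_1<0$ needs no separate computation: from the closed form one checks directly that $g_1(-x_1+ix_2)=\overline{g_1(x_1+ix_2)}$ (using $\overline{e^{-iz}}=e^{i\overline z}$ and $\operatorname{Re}E_1(i\overline z)=\operatorname{Re}E_1(-iz)$), so conjugating the expansion just proved yields the corresponding one there.

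The step I expect to be the genuine obstacle is the sharp remainder bound \eqref{eq:g1.err} with its explicit constant $(N+1)!\,2^{(N+1)/2}/(\pi|z|^{N+2})$; a soft $\mathcal{O}(|z|^{-N-2})$ estimate is immediate, but the stated constant demands the quantitative form of the $E_1$ remainder. I would write the tail of the $E_1$ series as an explicit integral and bound it, noting that the factor $e^{x_2}$ from $|e^{-iz}|$ cancels against $|e^{-w}|$ in the remainder (so the estimate is uniform in $x_2$) and that for $x_1>0$ the arguments $-iz$ and $i\overline z$ remain in a fixed half-plane on which the quantitative estimate applies with an explicit geometric constant. That constant, raised to the truncation order, is the source of $2^{(N+1)/2}$, and combining the two families via $|iz|=|{-i\overline z}|=|z|$ then produces \eqref{eq:g1.err}. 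This is delicate bookkeeping rather than a conceptual difficulty, but it is the one place where a soft argument will not suffice.
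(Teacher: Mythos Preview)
The paper does not actually prove this lemma; it simply cites Siltanen's thesis \cite{Siltanen:1999}, Theorem~3.11. Your approach is correct and is in fact the standard one: the closed form $g_1(z)=\tfrac{1}{2\pi}e^{-iz}\operatorname{Re}E_1(-iz)$ that you propose to derive is even quoted verbatim in the paper (Section~\ref{sec:scatcomp}, as the Matlab expression for evaluating $g_1$), and the asymptotic expansion then drops out of the classical expansion of $E_1$ exactly as you describe. Your identification of the explicit remainder constant as the only genuinely delicate step is accurate; the $2^{(N+1)/2}$ arises from bounding the $E_1$ remainder on the half-planes $\operatorname{Im}w\lessgtr 0$ containing $-iz$ and $i\overline z$ when $x_1>0$, and the details are indeed bookkeeping carried out in \cite{Siltanen:1999}.

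One small caution on signs: if you carry through your computation carefully you will obtain
\[
g_1(z)\sim -\frac{1}{4\pi}\sum_{j\ge 0}\left[\frac{j!}{(iz)^{j+1}}+e^{-2ix_1}\frac{j!}{(-i\overline z)^{j+1}}\right],
\]
with a plus rather than the minus printed in \eqref{eq:g1.exp}. This is consistent with the version the paper itself uses downstream (compare the plus sign in the displayed formula inside the proof of Lemma~\ref{lemma:mu.exp.z}), so the discrepancy is a typo in the appendix statement, not in your argument.
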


\begin{remark}
Since the error estimate (\ref{eq:g1.err}) does not depend on the condition
$x_{1}>0$, and since $g_{1}(z)$ is continuous, we can conclude that the
expansion (\ref{eq:g1.exp}) remains valid for $z\neq0$ and $\operatorname{Re}%
(z)=0$.
\end{remark}

Now consider $g_{k}(z)=g_{1}(kz)$. Since $\operatorname{Re}\left(  kz\right)
=\frac{1}{2}\left(  kz+\overline{k}\overline{z}\right)  $, we immediately obtain:

\begin{lemma}
Let $z=x_{1}+ix_{2}$ and $k\in\mathbb{C}$. For any integer $N\geq0$, the
expansion%
\begin{align}
g_{k}(z)  &  =-\frac{1}{4\pi}\sum_{j=0}^{N}\left[  \frac{j!}{\left(
ikz\right)  ^{j+1}}-e^{-i\left(  kz+\overline{k}\overline{z}\right)  }%
\frac{j!}{\left(  -i\overline{k}\overline{z}\right)  ^{j+1}}\right]
\label{eq:gk.exp}\\
&  +E_{N}(kz)\nonumber
\end{align}
holds, where
\[
\left\vert E_{N}(kz)\right\vert \leq C_{N}\left\vert kz\right\vert ^{-(N+2)}.
\]
\end{lemma}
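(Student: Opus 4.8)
The plan is to obtain \eqref{eq:gk.exp} directly from the expansion \eqref{eq:g1.exp} for $g_1$ by means of the scaling relation $g_k(z) = g_1(kz)$. Writing $w = kz$, everything reduces to evaluating the $g_1$ expansion at the single point $w$. The previous lemma furnishes this expansion when $\operatorname{Re}(w) > 0$, and the accompanying remark extends it to $\operatorname{Re}(w) = 0$; the one step that genuinely requires care is to verify that the same expansion remains valid when $\operatorname{Re}(w) < 0$, since $kz$ may lie anywhere in $\mathbb{C}\setminus\{0\}$.

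To handle the region $\operatorname{Re}(w) < 0$, I would write $w = -\overline{v}$ with $\operatorname{Re}(v) > 0$ and invoke the conjugation symmetry $g_1(-\overline{v}) = \overline{g_1(v)}$ recorded after \eqref{eq:g1.exp}. Conjugating the expansion of $g_1(v)$ and using the identities $\overline{(iv)^{j+1}} = (-i\overline{v})^{j+1}$, $\overline{(-i\overline{v})^{j+1}} = (iv)^{j+1}$, and $\overline{e^{-2i\operatorname{Re}(v)}} = e^{2i\operatorname{Re}(v)}$, I would check that, after re-expressing in terms of $w$ (so that $iv = -i\overline{w}$, $-i\overline{v} = iw$, and $\operatorname{Re}(v) = -\operatorname{Re}(w)$), the two summands exchange roles and the series reproduces \eqref{eq:g1.exp} verbatim with $x_1$ replaced by $\operatorname{Re}(w)$. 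The remainder is replaced by $\overline{E_N(v)}$, which has the same modulus and hence obeys \eqref{eq:g1.err} because $|v| = |w|$. This confirms that \eqref{eq:g1.exp}, with $x_1$ read as $\operatorname{Re}(z)$, holds for every $z \neq 0$.

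With the $g_1$ expansion now available at an arbitrary nonzero argument, I would set $w = kz$ and use $\operatorname{Re}(kz) = \tfrac{1}{2}(kz + \overline{k}\overline{z})$, so that $2i\operatorname{Re}(w) = i(kz + \overline{k}\overline{z})$ and hence $e^{-2i\operatorname{Re}(w)} = e^{-i(kz + \overline{k}\overline{z})}$. Combined with $iw = ikz$ and $\overline{w} = \overline{k}\overline{z}$, the main terms of the $g_1$ expansion become exactly the summands in \eqref{eq:gk.exp}, while the remainder is $E_N(kz)$ and inherits the bound $|E_N(kz)| \leq (N+1)!\,2^{(N+1)/2}\,\pi^{-1}\,|kz|^{-(N+2)}$. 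This yields the claim with $C_N = (N+1)!\,2^{(N+1)/2}/\pi$. The only real obstacle is the sign bookkeeping of the second paragraph; the remaining steps are a direct substitution into an expansion already in hand.
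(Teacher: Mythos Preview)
Your proposal is correct and follows exactly the route the paper takes: the paper simply invokes the scaling relation $g_k(z)=g_1(kz)$ together with $\operatorname{Re}(kz)=\tfrac{1}{2}(kz+\overline{k}\overline{z})$ and declares the lemma an immediate consequence. Your extra paragraph verifying that the $g_1$ expansion persists for $\operatorname{Re}(w)<0$ via the conjugation symmetry is precisely what the paper's phrase ``similar formulas hold for $x_1<0$'' is gesturing at, so you have merely made explicit what the paper leaves implicit.
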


The small-$k$ behavior of Fadeev's Green's function also plays an important role in the analysis of NCGO solutions. Let
$$G_0(z)=-\frac{1}{2\pi} \log|z|$$
be the logarithmic potential, and let
$$\ell(k)=\frac{1}{2\pi}\left(\log|k| + \gamma\right)$$
where $\gamma$ is the Euler constant. Finally let
$$\tilde{g}_k(z) = g_k(z)+\ell(z).$$
Nachman \cite[Lemma 3.4]{Nachman:1996} proves:
\begin{lemma}
For any $\varepsilon \in (0,1)$, all $k \in \mathbb{C}$ with $0<|k|<1/2$, and a positive constant $C_\varepsilon$,
the estimate
$$\left| \tilde{g}_k(z)-G_0(z) \right| 
\leq C_\varepsilon \left| k \right|^\varepsilon \langle z \rangle^\varepsilon
$$
holds.
\end{lemma}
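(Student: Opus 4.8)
The plan is to collapse the entire $k$-family onto a single scaling profile and reduce everything to the behavior of $g_1$ near the origin and at infinity. Set
\begin{equation*}
\phi(w) := g_1(w) + \frac{1}{2\pi}\log|w| + \frac{\gamma}{2\pi}.
\end{equation*}
Using the homogeneity $g_k(z)=g_1(kz)$ together with $\ell(k)=\frac{1}{2\pi}(\log|k|+\gamma)$ and $G_0(z)=-\frac{1}{2\pi}\log|z|$, one computes directly that
\begin{equation*}
\tilde{g}_k(z)-G_0(z) = g_1(kz)+\frac{1}{2\pi}\log|kz|+\frac{\gamma}{2\pi} = \phi(kz).
\end{equation*}
Thus the whole statement reduces to the single clean bound $|\phi(w)|\leq C_\eps|w|^\eps$ for all $w\neq0$: once this is in hand the lemma follows from $|\phi(kz)|\leq C_\eps|kz|^\eps = C_\eps|k|^\eps|z|^\eps \leq C_\eps|k|^\eps\langle z\rangle^\eps$, which is in fact slightly stronger than claimed (in particular the hypothesis $|k|<1/2$ is not actually needed for this argument).

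For large $|w|$ the bound on $\phi$ is immediate from the asymptotic expansion \eqref{eq:g1.exp}: taking $N=0$ there gives $g_1(w)=O(|w|^{-1})$, so $\phi(w)=\frac{1}{2\pi}\log|w|+O(|w|^{-1})$ on $|w|\geq1$. Since $\log t\leq \eps^{-1}t^\eps$ for $t\geq1$ and the remainder is bounded there, we get $|\phi(w)|\leq C_\eps|w|^\eps$ for $|w|\geq1$. So this regime is routine.

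The substance of the proof is the small-argument behavior: I must show
\begin{equation*}
g_1(w) = -\frac{1}{2\pi}\log|w| - \frac{\gamma}{2\pi} + O(|w|^\eps), \qquad |w|\to0,
\end{equation*}
i.e. that $\phi(w)\to0$ with a H\"older rate, and crucially that the constant is \emph{exactly} $-\gamma/(2\pi)$, so that the Euler-constant shift built into $\ell$ is the correct one. I would extract this in one of two ways. The cleaner route uses the closed-form representation of $g_1$ in terms of the exponential integral $E_1$ (the representation underlying the numerical evaluation of $g_1$ recorded in Section~\ref{sec:scatcomp}); since $E_1(\zeta)=-\gamma-\log\zeta+O(\zeta)$ for small $\zeta$, substituting $\zeta=-iw$ and expanding the analytic factor $e^{-iw}=1+O(|w|)$ yields $g_1(w)=-\frac{1}{2\pi}(\gamma+\log|w|)+O(|w|\log|w|)$, and $O(|w|\log|w|)=O(|w|^\eps)$ for every $\eps<1$. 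Alternatively, writing $r=g_1-G_0$ and using $(-\Delta-4i\overline{\partial})r=4i\overline{\partial}G_0=-i/(\pi\overline{w})$, one analyzes the Fourier integral for $g_1-G_0$ and isolates the logarithmically divergent contribution near $\xi=0$; its Hadamard-regularized value is precisely what produces the $\gamma$, while the remainder is H\"older continuous at the origin.

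The main obstacle is exactly this small-$w$ step, and both routes require care. In the explicit-function route one must justify the exponential-integral formula (including branch choices) on all of $\C$ rather than merely along a ray, and track the cross terms between the non-analytic $\operatorname{Re}E_1(-iw)$ factor and the oscillatory $e^{-iw}$ factor. In the Fourier route the delicacy is that the naive subtraction $\widehat{g_1}-\widehat{G_0}$ produces a non-integrable singularity at $\xi=0$, so the cancellation yielding the finite constant $-\gamma/(2\pi)$ must be performed on the regularized transforms rather than term by term; this regularization is precisely the reason the constant $\ell(k)$ appears in the statement. Everything else -- the scaling identity, the large-$w$ decay from \eqref{eq:g1.exp}, and the elementary inequality $\log t\leq\eps^{-1}t^\eps$ -- is mechanical.
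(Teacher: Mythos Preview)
The paper does not give its own proof of this lemma; it simply cites it as Lemma~3.4 of Nachman \cite{Nachman:1996}. So there is no argument in the paper to compare against directly.

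Your approach is sound and would constitute a complete proof once the small-$|w|$ expansion is carried out. The scaling reduction $\tilde{g}_k(z)-G_0(z)=\phi(kz)$ is exactly right (modulo an evident typo in the paper, which writes $\ell(z)$ where $\ell(k)$ is meant), and it collapses the two-parameter estimate to a single-variable one. The large-$|w|$ regime is indeed routine from the expansion \eqref{eq:g1.exp}. For small $|w|$, the exponential-integral route you outline is the most direct: the identity $g_1(w)=\frac{1}{2\pi}e^{-iw}\operatorname{Re}E_1(-iw)$ (the same one used numerically in Section~\ref{sec:scatcomp}) together with $E_1(\zeta)=-\gamma-\log\zeta+O(\zeta)$ gives precisely $\phi(w)=O(|w|\log|w|)=O(|w|^\eps)$, and the constant $-\gamma/(2\pi)$ falls out automatically. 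Your observation that the hypothesis $|k|<1/2$ is superfluous under this argument is also correct; that restriction is an artifact of how Nachman organizes his estimate rather than an essential feature.
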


\subsection{The Cauchy Transform and the Beurling Operator}
\label{sec:Cauchy}

Following \cite{AIM:2009}, chapter 4, we study the Cauchy transform and the Beurling
operator through the logarithmic potential associated with Poisson's equation in two dimensions.
For $\varphi\in\mathcal{C}_{0}^{\infty}(\mathbb{R}^{2})$, we may define the
logarithmic potential
\[
(L\varphi)(z)=\frac{2}{\pi}\int\log\left\vert z-z^{\prime}\right\vert
~\varphi(z^{\prime})~dm(z^{\prime})
\]
which has the property%
\[
\partial\overline{\partial}\left(  L\varphi\right)  =\varphi.
\]
Associated to $L$ are the \emph{Cauchy transform,}%
\[
\left(  P\varphi\right)  (z)=\frac{\partial}{\partial z}\left(  L\varphi
\right)  (z),
\]
the transform%
\[
\left(  \overline{P}\varphi\right)  (z)=\frac{\partial}{\partial\overline{z}%
}(L\varphi)(z),
\]
and the \emph{Beurling transform}%
\[
\left(  \mathcal{S}\varphi\right)  (z)=\frac{\partial^{2}}{\partial z^{2}%
}(L\varphi)(z).
\]
From these definitions and (\ref{eq:L}), it is easy to see that
\begin{equation}
P\circ\frac{\partial}{\partial\overline{z}}=\frac{\partial}{\partial
\overline{z}}\circ P=I \label{eq:P-0}%
\end{equation}
where $I$ is the identity on $\mathcal{C}_{0}^{\infty}(\mathbb{R}^{2})$, and
\begin{equation}
\mathcal{S}\left(  \frac{\partial\varphi}{\partial\overline{z}}\right)
=\frac{\partial\varphi}{\partial z}. \label{eq:B-0}%
\end{equation}
We have%
\begin{align*}
\left(  P\varphi\right)  (z)  &  =\frac{1}{\pi}\int\frac{1}{z-z^{\prime}%
}\varphi\left(  z^{\prime}\right)  ~dm(z^{\prime}),\\
\left(  \overline{P}\varphi\right)  (z)  &  =\frac{1}{\pi}\int\frac
{1}{\overline{z-z^{\prime}}}\varphi(z)~dm(z^{\prime}),
\end{align*}
and
\begin{equation}
(\mathcal{S}\varphi)(z)=-\frac{1}{\pi}\lim_{\varepsilon\downarrow0}\left(
\int_{\left\vert z-z^{\prime}\right\vert >\varepsilon}\frac{1}{(z-z^{\prime
})^{2}}\varphi(z^{\prime})~dm(z^{\prime})\right)  . \label{eq:B-1}%
\end{equation}

\subsubsection{Estimates on the Cauchy Transform}.
The following estimates on $P$ extend the Cauchy transform to $L^{p}$ spaces
and are standard consequences of the Hardy-Littlewood-Sobolev and H\"{o}lder
inequalities (see Vekua \cite{Vekua:1962} or \cite{AIM:2009}, \S 4.3). They 
are used to prove existence and uniqueness of solutions to the $\overline{\partial}$ problem that defines the inverse problem. 

\begin{lemma}
\label{lemma:P1}$~$(i) For any $p\in\left(  2,\infty\right)  $ and $f\in
L^{2p/(p+2)}(\mathbb{R}^{2})$,%
\begin{equation}
\left\Vert Pf\right\Vert _{p}\leq C_{p}\left\Vert f\right\Vert _{2p/(p+2)}.
\label{eq:P-1}%
\end{equation}
\newline(ii) For any $p,q$ with $1<q<2<p<\infty$ and any $f\in L^{p}\left(
\mathbb{R}^{2}\right)  \cap L^{q}\left(  \mathbb{R}^{2}\right)  $, the
estimate%
\begin{equation}
\left\Vert Pf\right\Vert _{\infty}\leq C_{p,q}\left\Vert f\right\Vert
_{L^{p}\cap L^{q}} \label{eq:P-2}%
\end{equation}
holds. Moreover, $P$ is H\"{o}lder continuous of order $\left(  p-2\right)
/p$ with%
\begin{equation}
\left\vert \left(  Pf\right)  (z)-\left(  Pf\right)  (w)\right\vert \leq
C_{p}\left\vert z-w\right\vert ^{(p-2)/p}\left\Vert f\right\Vert _{p}.
\label{eq:P-3}%
\end{equation}
\newline(iii) If $v\in L^{s}(\mathbb{R}^{2})$ and $q>2$ with $q^{-1}%
+1/2=p^{-1}+s^{-1}$, then for any $f\in L^{p}(\mathbb{R}^{2})$,
\begin{equation}
\left\Vert P\left(  vf\right)  \right\Vert _{q}\leq C_{p,q}\left\Vert
v\right\Vert _{s}\left\Vert f\right\Vert _{p}. \label{eq:P-4}%
\end{equation}

\end{lemma}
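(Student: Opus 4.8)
The plan is to exploit the fact that $P$ is convolution with the kernel $(\pi z)^{-1}$, so that $|Pf|$ is dominated by the Riesz potential $I_1|f|$ of order one in $\mathbb{R}^2$ (the kernel $|z|^{-1}$ has the form $|x|^{-(n-\alpha)}$ with $n=2$ and $\alpha=1$). Granting this identification, part (i) is immediate from the Hardy--Littlewood--Sobolev inequality $\|I_\alpha g\|_{L^q(\mathbb{R}^n)} \leq C\|g\|_{L^r}$, valid whenever $1/q = 1/r - \alpha/n$: setting $n=2$, $\alpha=1$, and $q=p$ forces $r = 2p/(p+2)$, which is exactly the exponent appearing in \eqref{eq:P-1}. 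Part (iii) then follows by combining (i) with Hölder's inequality. Indeed, applying (i) with exponent $q$ in place of $p$ (legitimate since $q>2$) reduces the bound on $\|P(vf)\|_q$ to a bound on $\|vf\|_{2q/(q+2)}$, and Hölder gives $\|vf\|_{2q/(q+2)} \leq \|v\|_s\|f\|_p$ precisely when $(q+2)/(2q) = 1/s + 1/p$, which rearranges to the stated relation $q^{-1}+1/2 = p^{-1}+s^{-1}$.

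For the $L^\infty$ estimate \eqref{eq:P-2} I would argue directly rather than through Hardy--Littlewood--Sobolev. Writing
\[
(Pf)(z) = \frac{1}{\pi}\int \frac{1}{z-z'}\,f(z')\,dm(z')
\]
and splitting the domain into the near region $\{|z-z'|<1\}$ and the far region $\{|z-z'|\geq 1\}$, I apply Hölder's inequality on each piece. On the near region the kernel lies in $L^{p'}$ with $p'=p/(p-1)<2$ (using $p>2$), so pairing it against $f\in L^p$ is finite; on the far region the kernel lies in $L^{q'}$ with $q'=q/(q-1)>2$ (using $q<2$), so pairing it against $f\in L^q$ is finite. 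Adding the two contributions yields $\|Pf\|_\infty \leq C_{p,q}(\|f\|_p+\|f\|_q)$, which is the asserted bound by $\|f\|_{L^p\cap L^q}$.

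The Hölder continuity \eqref{eq:P-3} is the one step requiring genuine work. The plan is to write the difference as
\[
(Pf)(z)-(Pf)(w) = \frac{1}{\pi}\int\left(\frac{1}{z-z'}-\frac{1}{w-z'}\right)f(z')\,dm(z'), \qquad \left|\frac{1}{z-z'}-\frac{1}{w-z'}\right| = \frac{|z-w|}{|z-z'|\,|w-z'|},
\]
and then split the plane according to the size of $|z-z'|$ relative to $\delta := |z-w|$. On the near region $\{|z-z'|<2\delta\}$ I discard the difference structure and estimate the two terms separately, each by Hölder against $f\in L^p$; the integral of $|z-z'|^{-p'}$ over a disc of radius $\sim\delta$ converges because $p'<2$ and produces a factor $\delta^{(2-p')/p'}=\delta^{(p-2)/p}$. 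On the far region $\{|z-z'|\geq 2\delta\}$ I keep the difference kernel, use $|w-z'|\geq \tfrac12|z-z'|$ so that the kernel is $\lesssim \delta\,|z-z'|^{-2}$, and Hölder against $f\in L^p$ with the convergent tail integral of $|z-z'|^{-2p'}$ again yields $\delta^{(p-2)/p}$. The main obstacle is precisely this bookkeeping: both regions must be organized so that the powers of $\delta$ coincide and reproduce the sharp Hölder exponent $(p-2)/p$. Everything else reduces to standard applications of the Hardy--Littlewood--Sobolev and Hölder inequalities.
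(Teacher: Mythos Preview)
Your proof is correct and follows exactly the approach the paper indicates: the paper does not give a detailed proof but simply states that these estimates ``are standard consequences of the Hardy--Littlewood--Sobolev and H\"older inequalities'' with references to Vekua and Astala--Iwaniec--Martin. Your argument---dominating $|Pf|$ by the Riesz potential $I_1|f|$ for part (i), combining with H\"older for part (iii), the near/far splitting for the $L^\infty$ bound, and the $\delta$-scale decomposition for the H\"older modulus---is precisely the standard route those references take, and all the exponent bookkeeping checks out.
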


\begin{remark}
\label{rem:P1}Since $\mathcal{C}_{0}^{\infty}(\mathbb{R}^{2})$ is dense in
$L^{p}\cap L^{q}$ and $(Pf)(z)=\mathcal{O}\left(  z^{-1}\right)  $ as
$\left\vert z\right\vert \rightarrow\infty$ for any $f\in\mathcal{C}%
_{0}^{\infty}(\mathbb{R}^{2})$, it follows from (ii) that if $f\in L^{p}\cap
L^{q}$ for $1<p<2<q<\infty$, then $Pf\in C_{0}(\mathbb{R}^{2})$, the
continuous functions that vanish at infinity.
\end{remark}

\begin{remark}
\label{rem:P2} Note that with $s=q$ in (\ref{eq:P-4}) we have%
\[
\left\Vert P\left(  vf\right)  \right\Vert _{p}\leq C_{p}\left\Vert
v\right\Vert _{2}\left\Vert f\right\Vert _{p}.
\]

\end{remark}

For any $f\in L^{2p/(p+2)}(\mathbb{R}^{2})$, Lemma \ref{lemma:P1} together
with (\ref{eq:P-0}) imply that $u=Pf$ solves $\overline{\partial}u=f$ in
distribution sense. Suppose, on the other hand, that $u\in L^{p}%
(\mathbb{R}^{2})$ for some $p\in\lbrack1,\infty)$ and $\overline{\partial}u=0$
in distribution sense. It follows that $\partial\overline{\partial}u=0$ in
distribution sense, so that $u\in\mathcal{C}^{\infty}$ by Weyl's lemma. Thus,
$u$ is actually holomorphic, so $u$ vanishes identically by Liouville's
Theorem. From this fact and (\ref{eq:P-0}), we deduce:

\begin{lemma}
\label{lemma:P2}Suppose that $p\in(2,\infty)$, that $u\in L^{p}(\mathbb{R}%
^{2})$, that $f\in L^{2p/(p+2)}(\mathbb{R}^{2})$, and that $\overline
{\partial}u=f$ in distribution sense. Then $u=Pf$. Conversely, if $f\in
L^{2p/(p+2)}(\mathbb{R}^{2})$ and $u=Pf$, then $\overline{\partial}u=f$ in
distribution sense.
\end{lemma}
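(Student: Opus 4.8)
The plan is to prove the two implications in the order that makes the second follow cheaply from the first: I would establish the converse statement ($u=Pf \Rightarrow \overline{\partial}u=f$) first, since it is the analytic core, and then obtain the forward statement by a subtraction-and-Liouville argument.

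For the converse, the starting point is the operator identity \eqref{eq:P-0}, which gives $\overline{\partial}\circ P = I$ on $\mathcal{C}_{0}^{\infty}(\mathbb{R}^{2})$. I would upgrade this to the distributional identity $\overline{\partial}(Pf)=f$ for every $f\in L^{2p/(p+2)}(\mathbb{R}^{2})$ by density. Concretely, fix a test function $\varphi\in\mathcal{C}_{0}^{\infty}(\mathbb{R}^{2})$; by the definition of the distributional derivative it suffices to verify $-\int (Pf)\,\overline{\partial}\varphi\,dm = \int f\varphi\,dm$. This holds for $f\in\mathcal{C}_{0}^{\infty}$ directly from \eqref{eq:P-0}. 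For general $f$, pick $f_n\in\mathcal{C}_{0}^{\infty}$ with $f_n\to f$ in $L^{2p/(p+2)}$; estimate \eqref{eq:P-1} gives $Pf_n\to Pf$ in $L^p$, while $\overline{\partial}\varphi$ lies in the dual space $L^{p'}$ with $p'=p/(p-1)$ and $\varphi$ lies in the dual of $L^{2p/(p+2)}$, so both sides of the pairing converge and the identity persists in the limit.

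For the forward implication, suppose $u\in L^p$ satisfies $\overline{\partial}u=f$ distributionally, and set $v=u-Pf$. Since both $u$ and $Pf$ belong to $L^p$ (the latter by Lemma \ref{lemma:P1}), so does $v$; and by the converse just established $\overline{\partial}(Pf)=f$, whence $\overline{\partial}v=f-f=0$ in the distributional sense. Applying $\partial$ gives $\partial\overline{\partial}v=\tfrac14\Delta v=0$, so $v$ is harmonic; Weyl's lemma promotes $v$ to a $\mathcal{C}^{\infty}$ function, and then $\overline{\partial}v=0$ holds classically, making $v$ entire. An entire function in $L^p$ with $p<\infty$ must vanish: writing $v(z_0)$ as the average of $v$ over the disk of radius $\rho$ about $z_0$ and applying H\"older's inequality bounds $|v(z_0)|$ by a constant times $\rho^{-2/p}\|v\|_p$, which tends to $0$ as $\rho\to\infty$. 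Hence $v\equiv 0$ and $u=Pf$.

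The argument is largely routine, and the only step demanding real care is the passage to the limit in the converse: one must confirm that \eqref{eq:P-1} genuinely controls $Pf_n-Pf$ in $L^p$ and that the conjugate exponents arising in the two dualizing pairings are admissible, so that a fixed test function and its $\overline{\partial}$-derivative lie in the relevant dual spaces. The Liouville step, while standard, is the only place where the finiteness of $p$ is actually used in the forward direction; it rests on the sub-mean-value representation of holomorphic functions rather than on any special feature of the Cauchy transform.
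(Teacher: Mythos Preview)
Your proof is correct and follows essentially the same route as the paper: the converse via density of $\mathcal{C}_0^\infty$ in $L^{2p/(p+2)}$ together with the continuity estimate \eqref{eq:P-1} and the identity \eqref{eq:P-0}, and the forward direction via subtraction, Weyl's lemma, and Liouville. The paper states these steps more tersely in the paragraph preceding the lemma, but the argument is the same; your explicit mean-value--H\"older justification of the Liouville step is a welcome addition of detail.
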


The following expansion for solutions of $\overline
{\partial}u=f$ when $f$ is rapidly decaying gives rise to the large-$k$ asymptotic expansion
for $\mu(z,k)$.

\begin{lemma}
\label{lemma:P3}Suppose that $p\in\left(  2,\infty\right)  $, that $u\in
L^{p}(\mathbb{R}^{2})$, that $f\in L^{2p/(p+2),N}(\mathbb{R}^{2})$, and that
$\overline{\partial}u=f$. Then%
\[
z^{N}\left[  u(z)-\sum_{j=0}^{N-1}\frac{1}{z^{j+1}}\int\zeta^{j}%
f(\zeta)~dm(\zeta)\right]  \in L^{p}(\mathbb{R}^{2}).
\]

\end{lemma}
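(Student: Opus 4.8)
The plan is to identify $u$ with the Cauchy transform $Pf$ and then replace the Cauchy kernel by a finite geometric series, so that the asserted expansion becomes an \emph{exact} algebraic identity with an explicit, controllable remainder. First I would check that the hypotheses of Lemma \ref{lemma:P2} hold: since $f\in L^{2p/(p+2),N}$ with $N\geq 0$ we have $\langle z\rangle^N\geq 1$, hence $\|f\|_{2p/(p+2)}\leq \|\langle\cdot\rangle^N f\|_{2p/(p+2)}<\infty$, so $f\in L^{2p/(p+2)}$ and Lemma \ref{lemma:P2} gives
\[
u(z)=(Pf)(z)=\frac{1}{\pi}\int\frac{f(z')}{z-z'}\,dm(z').
\]

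Next I would insert the finite geometric series identity, valid for all $z\neq 0$ and $z\neq z'$,
\[
\frac{1}{z-z'}=\sum_{j=0}^{N-1}\frac{(z')^{j}}{z^{j+1}}+\frac{(z')^{N}}{z^{N}(z-z')},
\]
into the integral. This produces the partial sum $\sum_{j=0}^{N-1}z^{-(j+1)}c_{j}$ with coefficients $c_{j}=\frac{1}{\pi}\int (z')^{j}f(z')\,dm(z')$ (the factor $1/\pi$ is the normalization of $P$ and is what the displayed statement absorbs into its coefficients), together with the remainder
\[
R_{N}(z)=\frac{1}{\pi z^{N}}\int\frac{(z')^{N}f(z')}{z-z'}\,dm(z').
\]
The rearrangement is legitimate once each coefficient integral converges absolutely, and verifying this is the first real point. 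For $0\leq j\leq N-1$ one has $|z'|^{j}\leq\langle z'\rangle^{N-1}$, and writing $\langle z'\rangle^{N-1}|f|=\langle z'\rangle^{-1}\cdot\big(\langle z'\rangle^{N}|f|\big)$, Hölder's inequality with the conjugate exponents $q=2p/(p+2)$ and $q'=2p/(p-2)$ gives $\|\langle\cdot\rangle^{N-1}f\|_{1}\leq\|\langle\cdot\rangle^{-1}\|_{q'}\,\|\langle\cdot\rangle^{N}f\|_{q}$; since $p>2$ forces $q'>2$, the weight $\langle\cdot\rangle^{-1}$ lies in $L^{q'}(\mathbb{R}^{2})$, so $(z')^{j}f\in L^{1}$ and the $c_{j}$ are finite.

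Finally I would recognize the remainder as another Cauchy transform. Setting $h(z')=(z')^{N}f(z')$, the pointwise bound $|h|\leq\langle\cdot\rangle^{N}|f|$ shows $h\in L^{2p/(p+2)}$, and
\[
z^{N}R_{N}(z)=\frac{1}{\pi}\int\frac{h(z')}{z-z'}\,dm(z')=(Ph)(z).
\]
Hence $z^{N}\big[u(z)-\sum_{j=0}^{N-1}z^{-(j+1)}c_{j}\big]=Ph$, and the mapping estimate (\ref{eq:P-1}) of Lemma \ref{lemma:P1}(i) yields $\|Ph\|_{p}\leq C_{p}\|h\|_{2p/(p+2)}<\infty$, which is exactly the claim. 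The only genuine work is the weighted bookkeeping: the Hölder estimate guaranteeing absolute convergence of the coefficient integrals, and the membership $h\in L^{2p/(p+2)}$ that lets Lemma \ref{lemma:P1}(i) apply. Once these decay estimates are in place, the result follows immediately from the kernel expansion and the boundedness of $P$, so I expect no serious analytic obstacle beyond tracking the weights correctly.
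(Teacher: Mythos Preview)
Your proposal is correct and follows exactly the paper's approach: the paper's proof says only that the result is ``an immediate consequence of the estimate (\ref{eq:P-1}), Lemma \ref{lemma:P2} and the formula $\frac{1}{z-\zeta}=\frac{1}{z}\sum_{j=0}^{N-1}(\zeta/z)^{j}+z^{-N}\zeta^{N}/(z-\zeta)$,'' which is precisely your argument of identifying $u=Pf$, expanding the kernel, and applying the $L^{2p/(p+2)}\to L^{p}$ bound to the remainder $P(\zeta^{N}f)$. Your additional verification that the coefficient integrals converge absolutely and that $f\in L^{2p/(p+2)}$ are details the paper omits but which are exactly the right points to check.
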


\begin{proof}
An immediate consequence of the estimate (\ref{eq:P-1}), Lemma \ref{lemma:P2}
and the formula%
\[
\frac{1}{z-\zeta}=\frac{1}{z}\sum_{j=0}^{N-1}\left(  \frac{\zeta}{z}\right)
^{j}+\frac{1}{z^{N}}\frac{\zeta^{N}}{z-\zeta}.
\]

\end{proof}

\subsubsection{A Generalized Liouville's Theorem}

The following analogue of Liouville's Theorem plays a fundamental role in the theory of $\overline{\partial}$-problems such as those used to define the inverse scattering map for the NV equation. The version cited here is due to Brown and Uhlmann \cite{BU:1997}. A slightly more general result that plays an important role in the study of the NV equation with subcritical initial data may be found in \cite{Music:2013}.

\begin{theorem} \cite[Corollary 3.11]{BU:1997} Suppose that $p\in [1,\infty)$ and that $u \in L^p(\mathbb{R}^2) \cap L^2_{\mathrm loc}(\mathbb{R}^2)$ is a weak solution of 
$$ \overline{\partial} u = a u + b \overline{u} $$
with coefficients $a,b \in L^2(\mathbb{R}^2)$. Then $u=0$.
\end{theorem}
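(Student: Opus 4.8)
The plan is to prove this generalized Liouville theorem by the Vekua similarity principle: absorb the antiholomorphic term into a single coefficient, strip off an exponential factor to expose an entire function, and force that function to vanish. The first step is to reduce the real-linear system to an ordinary $\overline{\partial}$-equation. I would define
\[ \eta(z) = \begin{cases} a(z) + b(z)\,\overline{u(z)}/u(z), & u(z)\neq 0,\\ a(z), & u(z)=0, \end{cases} \]
so that $\overline{\partial} u = \eta u$ holds almost everywhere, while pointwise $|\eta| \leq |a| + |b|$, whence $\eta \in L^2(\mathbb{R}^2)$. This trades the real-linear system for a complex-linear $\overline{\partial}$-equation, at the cost of a coefficient that is merely in $L^2$.

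Next I would solve $\overline{\partial} s = \eta$ by the Cauchy transform, $s = P\eta$, and set $v := e^{-s} u$. A standard mollification argument justifies the product rule and gives $\overline{\partial} v = e^{-s}\bigl(\overline{\partial} u - (\overline{\partial}s)\,u\bigr) = e^{-s}(\eta u - \eta u) = 0$ in the weak sense. By Weyl's lemma (exactly as in the discussion preceding Lemma~\ref{lemma:P2}) $v$ is then genuinely holomorphic on all of $\mathbb{C}$.

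It remains to show $v \equiv 0$, and the mechanism is the one already used for Lemma~\ref{lemma:P2}: since $|v|^{p'}$ is subharmonic for any $p' \in [1,\infty)$, the sub-mean-value inequality gives $|v(z_0)|^{p'} \leq (\pi R^2)^{-1}\int_{|z-z_0|<R}|v|^{p'}\,dm \leq (\pi R^2)^{-1}\|v\|_{p'}^{p'} \to 0$ as $R\to\infty$, so an entire function lying in some $L^{p'}(\mathbb{R}^2)$ with $p'<\infty$ must vanish identically. Thus the whole proof reduces to establishing $v = e^{-s}u \in L^{p'}$ for a single finite exponent $p'$.

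The hard part is this last integrability, which hinges on controlling $e^{-s} = e^{-P\eta}$ when $\eta$ is only in $L^2$ --- the borderline case and the real content of Brown and Uhlmann's argument. The kernel $1/(z-\zeta)$ just fails to be square-integrable near the diagonal, so $P\eta$ can be locally unbounded and $e^{-s}$ is not a bounded factor. The two estimates I would invoke are: (i) the far-field part of $P\eta$, namely the contribution of $\eta$ restricted to $|\zeta|\leq |z|/2$, is bounded by $\|\eta\|_2/\sqrt{\pi}$ uniformly in $z$ (Cauchy--Schwarz, using $|z-\zeta|\geq |z|/2$ there), so $s$ is bounded at infinity up to a fluctuation whose $L^2$-mass tends to zero; and (ii) $\exp(c|P\eta|)$ is locally integrable to every power, a Moser--Trudinger/John--Nirenberg type statement expressing that $P$ maps $L^2$ into an exponential class. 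Granting these, I would split $\int|v|^{p'} = \int |e^{-s}|^{p'}|u|^{p'}$ into near- and far-field pieces: near the diagonal the exponential integrability of $e^{-s}$ is paired by H\"older's inequality against $|u|^{p'}\in L^1_{\mathrm{loc}}$ (using $u\in L^2_{\mathrm{loc}}$, valid once $p'\leq 2$), while at infinity the boundedness of $e^{-s}$ is paired against the decay carried by $u\in L^p$. Choosing $p' \leq \min(p,2)$ then yields $v\in L^{p'}(\mathbb{R}^2)$, hence $v\equiv 0$, so that $u = e^{s} v \equiv 0$, as claimed.
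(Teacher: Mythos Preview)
The paper does not actually prove this theorem; it is stated in the appendix purely as a citation of \cite[Corollary 3.11]{BU:1997}, with no argument supplied. So there is no ``paper's own proof'' to compare against.

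That said, your outline is the correct one and is in fact the route taken in the original Brown--Uhlmann argument: the Vekua trick $\eta = a + b\,\overline{u}/u$ to reduce to $\overline{\partial}u=\eta u$ with $\eta\in L^2$, the integrating factor $s=P\eta$ to produce an entire $v=e^{-s}u$, and then a Liouville-type step. You have also correctly isolated the real difficulty --- that $\eta\in L^2$ is the endpoint case where $P\eta$ need not be bounded, so controlling $e^{-s}$ requires an exponential-integrability input of Moser--Trudinger/John--Nirenberg type rather than a pointwise bound. One refinement worth making explicit: the local exponential integrability $\exp(c|P\eta|)\in L^q_{\mathrm{loc}}$ generally holds only for $c$ below a threshold depending on $\|\eta\|_2$, so in practice one first splits $\eta=\eta_1+\eta_2$ with $\eta_1$ compactly supported and $\|\eta_2\|_2$ small, treating $P\eta_1$ as bounded away from $\mathrm{supp}\,\eta_1$ and using the smallness of $\|\eta_2\|_2$ to push the exponential class high enough. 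With that decomposition in hand your near-field/far-field H\"older splitting goes through and yields $v\in L^{p'}$ for $p'\leq\min(p,2)$, hence $v\equiv 0$.
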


\begin{remark}
If $f\in\mathcal{S}\left(  \mathbb{R}^{2}\right)  $ and depends smoothly on
parameters, then the asymptotic expansion holds pointwise and is
differentiable in the parameters.
\end{remark}

\subsubsection{The Beurling Transform}

The principal value integral (\ref{eq:B-1}) identifies $\mathcal{S}$ as a
Calder\'on-Zygmund type integral operator. We have (see, for example,
\cite{AIM:2009}, \S 4.5.2):

\begin{lemma}
\label{lemma:S}Suppose that $p\in\left(  1,\infty\right)  $. The operator
$\mathcal{S}$ extends to a bounded operator from $L^{p}(\mathbb{R}^{2})$ to
itself, unitary if $p=2$. Moreover, if $\nabla\varphi$ belongs to
$L^{p}(\mathbb{R}^{2})$ for $p\in\left(  1,\infty\right)  $, then
$\mathcal{S}\left(  \partial\varphi\right)  =\overline{\partial}\varphi$.
\end{lemma}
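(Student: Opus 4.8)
The plan is to reduce all three assertions to a single computation of the Fourier multiplier of $\mathcal{S}$. Starting from the definition $\mathcal{S}=\partial^{2}\circ L$ together with the property $\partial\overline{\partial}L=I$ built into \eqref{eq:L}, I would first record that, with $\xi=\xi_{1}+i\xi_{2}$, the symbols of $\partial$ and $\overline{\partial}$ are $\tfrac{i}{2}\overline{\xi}$ and $\tfrac{i}{2}\xi$; hence $\partial\overline{\partial}$ has symbol $-\tfrac14|\xi|^{2}$, so $L$ has symbol $-4/|\xi|^{2}$, and multiplying gives that $\mathcal{S}$ is the Fourier multiplier operator with symbol $m(\xi)=\overline{\xi}/\xi$. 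Equivalently, one reads this multiplier directly off the principal-value kernel $-1/(\pi z^{2})$ in \eqref{eq:B-1} via the classical Fourier transform of a homogeneous distribution of degree $-2$ on $\mathbb{R}^{2}$; I would verify the two computations agree as a consistency check, since the intertwining assertion is really a statement about this symbol.

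For the $L^{2}$ claim, since $|m(\xi)|=|\overline{\xi}/\xi|=1$ almost everywhere, Plancherel's theorem immediately gives $\|\mathcal{S}\varphi\|_{2}=\|\varphi\|_{2}$, so $\mathcal{S}$ is an isometry; because its adjoint is the multiplier operator with symbol $\overline{m(\xi)}=\xi/\overline{\xi}$ and $m\,\overline{m}\equiv 1$, one obtains $\mathcal{S}^{\ast}\mathcal{S}=\mathcal{S}\mathcal{S}^{\ast}=I$, i.e. $\mathcal{S}$ is unitary. For the full range $1<p<\infty$ I would invoke Calderón--Zygmund theory: the kernel $K(z)=-1/(\pi z^{2})$ is smooth away from the origin, homogeneous of the critical degree $-2$, and has vanishing mean on every circle $|z|=r$ (since $\int_{0}^{2\pi}e^{-2i\theta}\,d\theta=0$), which are exactly the cancellation and regularity hypotheses guaranteeing weak-$(1,1)$ and strong $L^{p}$ bounds for $1<p<\infty$. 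As this is precisely the content of \cite[\S 4.5.2]{AIM:2009}, I would quote it rather than reprove the singular-integral machinery.

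The intertwining identity is the extension of \eqref{eq:B-0} from $\mathcal{C}_{0}^{\infty}(\mathbb{R}^{2})$ to the class $\{\varphi:\nabla\varphi\in L^{p}\}$, and the multiplier computation confirms the test-function version, since $m(\xi)\cdot\tfrac{i}{2}\xi=\tfrac{i}{2}\overline{\xi}$. To pass to the stated class I would \emph{not} approximate $\varphi$ itself, which need not lie in any $L^{p}$, but rather observe that $\partial\varphi$ and $\overline{\partial}\varphi$ are fixed linear combinations of the components of $\nabla\varphi$ and hence already lie in $L^{p}$. The cleanest framework is to identify $\{\varphi:\nabla\varphi\in L^{p}\}$ with the homogeneous Sobolev space $\dot W^{1,p}(\mathbb{R}^{2})$ (modulo constants), in which $\mathcal{C}_{0}^{\infty}$ is dense for $1<p<\infty$; choosing $\varphi_{n}\in\mathcal{C}_{0}^{\infty}$ with $\nabla\varphi_{n}\to\nabla\varphi$ in $L^{p}$, applying \eqref{eq:B-0} to each $\varphi_{n}$, and letting $n\to\infty$ using the $L^{p}$-boundedness of $\mathcal{S}$ established above yields the identity in the limit.

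The genuinely nontrivial input, the $L^{p}$-boundedness for all $p\in(1,\infty)$, is available off the shelf from \cite{AIM:2009}, so the main obstacle specific to this lemma is the last step: producing test functions whose \emph{gradients} converge in $L^{p}$ while keeping the error under control. A naive mollify-and-truncate argument introduces a term $R^{-1}(\nabla\chi)(\cdot/R)\,\varphi$ that involves $\varphi$ rather than $\nabla\varphi$ and need not decay in $L^{p}$; the way around this is to work at the level of the $L^{p}$ functions $\partial\varphi,\overline{\partial}\varphi$ (which satisfy the compatibility $\overline{\partial}(\partial\varphi)=\partial(\overline{\partial}\varphi)$), using the density statement for $\dot W^{1,p}$ together with the continuity of $\mathcal{S}$ and of the distributional derivatives as maps into $\mathcal{D}'$. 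Everything else is either Plancherel or a direct appeal to the cited Calderón--Zygmund estimates.
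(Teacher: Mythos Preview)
The paper does not supply its own proof of this lemma; it simply states the result with a reference to \cite[\S 4.5.2]{AIM:2009}. Your sketch is exactly the standard argument carried out in that reference (Fourier multiplier $m(\xi)=\overline{\xi}/\xi$, unitarity from $|m|\equiv 1$, $L^{p}$-boundedness from Calder\'on--Zygmund theory, then density), so there is nothing to compare at the level of strategy.

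There is one genuine issue you should address. The lemma as printed asserts $\mathcal{S}(\partial\varphi)=\overline{\partial}\varphi$, whereas \eqref{eq:B-0} says $\mathcal{S}(\overline{\partial}\varphi)=\partial\varphi$, and these are \emph{not} the same statement. Your multiplier check $m(\xi)\cdot\tfrac{i}{2}\xi=\tfrac{i}{2}\overline{\xi}$ in fact verifies \eqref{eq:B-0}: since $\tfrac{i}{2}\xi$ is the symbol of $\overline{\partial}$ and $\tfrac{i}{2}\overline{\xi}$ that of $\partial$, what you have shown is $\mathcal{S}(\overline{\partial}\varphi)=\partial\varphi$. The identity as literally written in the lemma would require $m(\xi)\cdot\tfrac{i}{2}\overline{\xi}=\tfrac{i}{2}\xi$, i.e.\ $\overline{\xi}^{2}/\xi=\xi$, which is false. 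So the printed lemma has $\partial$ and $\overline{\partial}$ interchanged---almost certainly a typographical slip, since \eqref{eq:B-0} and the definition $\mathcal{S}=\partial^{2}L$ are internally consistent. You should say this explicitly rather than silently proving the corrected version; otherwise your ``proof'' of the stated identity does not actually match your computation.
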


\end{document}